\def\tank#1{\protected@xdef\@thanks{\@thanks
 \protect\footnotetext[0]{#1}}}
\def\bigfoot{

 \@footnotetext}
\newcommand{\ea}{\end{array}}
\newtheorem{theorem}{Theorem}[section]
\newtheorem{lem}{Lemma}[section]
\newtheorem{prp}[theorem]{Proposition}
\newtheorem{thm}[theorem]{Theorem}
\newtheorem{cor}[theorem]{Corollary}
\newtheorem{dfn}[theorem]{Definition}
\def\beq{\begin{equation}}
\def\nneq{\end{equation}}
\def\bthm{\begin{thm}}
\def\nthm{\end{thm}}
\def\blem{\begin{lem}}
\def\nlem{\end{lem}}
\def\bprf{\begin{proof}}
\def\nprf{\end{proof}}
\def\bprop{\begin{prop}}
\def\nprop{\end{prop}}
\def\brmk{\begin{rem}}
\def\nrmk{\end{rem}}
\def\bexa{\begin{exa}}
\def\nexa{\end{exa}}
\def\bcor{\begin{cor}}
\def\ncor{\end{cor}}
\def\AA{\mathcal A}
\def\FF{\mathcal F}
\def\EE{\mathcal E}
\def\HH{\mathcal H}
\def\e{\varepsilon}
\title{Moderate deviations for stochastic models of two-dimensional second grade fluids}
\thanks{Tusheng.Zhang@manchester.ac.uk},\ \
\thanks{zwtzjr@mail.ustc.edu.cn}\\
\date{}
\newenvironment{proof}{\par\noindent{\bf Proof:}}{\hspace*{\fill}$\blacksquare$\par}
\begin{document}
\maketitle
\noindent \textbf{Abstract:}
In this paper, we prove a central limit theorem and establish a moderate deviation principle for stochastic models of incompressible second grade fluids.
The weak convergence method introduced by \cite{Budhiraja-Dupuis} plays an important role.


\vspace{4mm}


\vspace{3mm}
\noindent \textbf{Key Words:}
Central limit theorem;
Moderate deviations;
Second grade fluids;
Non-Newtonian fluid;
Stochastic partial differential equations.
\numberwithin{equation}{section}
\section{Introduction}

The second grade fluids is an admissible
model of slow flow fluids, which contains a large class  of Non-Newtonian fluids such as industrial fluids, slurries, polymer melts, etc..
And as mentioned in \cite{Dunn-Fosdick}, ``the second grade fluid has general and pleasant properties such as boundedness, stability, and exponential decay". Furthermore, it also has interesting connections with many other fluid models, see \cite{Busuioc}, \cite{Busuioc-Ratiu}, \cite{Holm-Marsden-Ratiu}, \cite{Holm-Marsden-Ratiu01}, \cite{Iftimie}, \cite{Shkoller2001}, \cite{Shkoller1998} and references therein. For example, the second grade fluids reduce to Navier-Stokes Equations when some of the parameters equal to 0, and it was shown in \cite{Iftimie} that they are good approximations of the Navier-Stokes Equation.
We refer to \cite{Dunn-Fosdick}, \cite{Dunn-Rajagopal}, \cite{Fosdick-Rajagopal}, \cite{Noll-truesdell} for a comprehensive theory of the second grade fluids.

Recently, the stochastic models of two-dimensional second grade fluids (\ref{01}) have been studied in \cite{RS-12}, \cite{RS-10-01}  and \cite{RS-10}, where the authors obtained the existence and uniqueness of solutions
and investigated the behavior of the solutions. The martingale solution of
such system driven by L\'evy noise is studied in \cite{HRS}.

\vskip 0.3cm
In this paper, we are concerned with asymptotic behaviors of stochastic models for the incompressible second grade
fluids, which are given as follows:
\begin{eqnarray}\label{01}
&  &d(u^\varepsilon-\alpha \Delta u^\varepsilon)+\Big(-\nu \Delta u^\varepsilon+curl(u^\varepsilon-\alpha \Delta u^\varepsilon)\times u^\varepsilon+\nabla\mathfrak{P}^\e\Big)dt\\\nonumber
& &=
F(u^\varepsilon,t)dt+\sqrt{\varepsilon}G(u^\varepsilon,t)dW,\ \ \ in\ \mathcal{O}\times(0,T],
\end{eqnarray}
under the following condition
\begin{eqnarray}\label{02}
\left\{
 \begin{array}{llll}
 & \hbox{${\rm{div}}\ u^\varepsilon=0\ \text{in}\ \mathcal{O}\times(0,T]$;} \\
 & \hbox{$u^\varepsilon=0\ \text{in}\ \partial \mathcal{O}\times[0,T]$;} \\
 & \hbox{$u^\varepsilon(0)=u_0\ \text{in}\ \mathcal{O}$,}
 \end{array}
\right.
\end{eqnarray}
where $\mathcal{O}$ is a connected, bounded open
subset of $\mathbb{R}^2$ with boundary $\partial \mathcal{O}$ of class $\mathcal{C}^3$; $u^\e=(u_1^\e,u_2^\e)$ and $\mathfrak{P}^\e$ represent the random velocity and modified pressure, respectively;
and $W$ is an $m$-dimensional standard Brownian motion defined on a complete probability space $(\Omega,\mathcal{F},\{\mathcal{F}_t\}_{t\in[0,T]},P)$.

\vskip 0.2cm
Set
\begin{eqnarray}\label{SP-01}
\mathcal{C}=\Big\{u\in[\mathcal{C}^\infty_c(\mathcal{O})]^2\ {\rm such \ that\  div}\ u=0\Big\},\nonumber\\
\mathbb{H}={\rm\ closure\ of}\ \mathcal{C}\ {\rm in}\ \mathbb{L}^2(\mathcal{O}).\nonumber
\end{eqnarray}
Let $\mathbb{P}$ be the Helmholtz-Leray projection from $\mathbb{L}^2(\mathcal{O})$ into $\mathbb{H}$. Let $A$ be the Stoke operator $-\mathbb{P}\Delta$(see the precise definition below). One can see that (\ref{01}) is equivalent to the following stochastic evolution equation:
\begin{eqnarray}\label{Abstract}
du^\varepsilon(t)+\nu \widehat{A}u^\varepsilon(t)dt+\widehat{B}(u^\varepsilon(t),u^\varepsilon(t))dt=\widehat{F}(u^\varepsilon(t),t)+\sqrt{\varepsilon}\widehat{G}(u^\e(t),t)dW(t),
\end{eqnarray}
with initial value $u^{\e}(0)=u_0$.\\
Where $\widehat{A}=(I+\alpha A)^{-1}A$, $\widehat{B}(u,v)=(I+\alpha A)^{-1}\Big(curl(u-\alpha\Delta u)\times v\Big)$, $\widehat{F}=(I+\alpha A)^{-1}F$, $\widehat{G}=(I+\alpha A)^{-1}G$.

\vskip 0.3cm
As the parameter $\varepsilon$ tends to zero, the solution $u^\e$ of (\ref{Abstract}) will tend to the solution of the following deterministic equation
\begin{eqnarray}\label{Deterministic}
du^0(t)+\nu \widehat{A}u^0(t)dt+\widehat{B}(u^0(t),u^0(t))dt=\widehat{F}(u^0(t),t),
\end{eqnarray}
with initial value $u^0(0)=u_0$.

\vskip 0.2cm
In this paper, we shall investigate deviations of $u^{\e}$ from the deterministic solution $u^0$, as $\e$ decreases to $0$, that is, the asymptotic behavior of the trajectory,
$$Z^{\e}(t)=\frac{1}{\sqrt{\e}\lambda(\varepsilon)}(u^{\e}-u^0)(t),\ \ t\in[0,T],$$
where $\lambda(\e)$ is some deviation scale which strongly influences the asymptotic behavior of $Y^\e$.

\vskip 0.2cm
1. The case $\lambda(\e)=1/\sqrt{\e}$ provides some large deviations estimates. The large deviation theory has important applications in many areas,
such as in thermodynamics, statistical mechanics, information
theory and risk management, etc., see  \cite{Dembo-Zeitouni} \cite{Touchette} and reference therein, and it has been extensively studied in recent years. For stochastic evolution equations and stochastic
 partial differential equations driven by Gaussian processes, there are many papers on this topic, see e.g.\ \cite{BDM08}, \cite{BDM10}, \cite{BDF12}, \cite{CW}, \cite{CR}, \cite{CM2}, \cite{Liu}, \cite{S}, \cite{Z}.
Large deviations for stochastic models of two-dimensional second grade fluids has been obtained by Zhai and Zhang in \cite{Zhai}.

\vskip 0.2cm
2. The case $\lambda(\e)=1$ is known as the central limit theorem(CLT for short). We will show that $(u^{\e}-u^0)/\sqrt{\e}$ converges to a solution of a stochastic evolution equation as $\e$ decreases to $0$.

\vskip 0.2cm
3. When the deviation scale satisfies
\begin{equation}\label{condition}
\lambda(\e)\rightarrow+\infty,\ \ \ \ \sqrt{\e}\lambda(\e)\rightarrow 0 \ \ \ \ \text{as}\ \ \e\rightarrow 0,
\end{equation}
we are in the domain of the so-called moderate deviation principle (MDP for short, cf.\cite{Dembo-Zeitouni}), which
fills in the gap between the CLT scale $[\lambda(\e)=1]$ and the large deviations scale $[\lambda(\e)=1/\sqrt{\e}]$.
In this paper, we will establish the MDP for 
(\ref{01}).

Like the large deviations, the estimates of moderate deviations are very useful in the theory of statistical inference. It can provide us with the rate of convergence and a useful method for constructing asymptotic confidence intervals, see \cite{Ermakov}, \cite{Gao}, \cite{Inglot}, \cite{Kallenberg}  and references therein. There are many results on the MDP in various frameworks, for example, De Acosta \cite{Acosta}, Chen\cite{Chen} and Ledoux \cite{Ledoux} for processes with independent increments; Wu \cite{Wu} for Markov processes; Guillin and Liptser \cite{Guillin} for diffusion processed; Wang and Zhang \cite{Wang2} for stochastic reaction-diffusion equations; Wang, Zhai and Zhang \cite{Wang1} for 2-D stochastic Navier-Stokes equations driven by Brownian motion; Budhiraja, Dupuis and Ganguly \cite{Budhiraja-Dupuis-Ganguly} for stochastic differential equations with jump; Dong, Xiong, Zhai and Zhang \cite{DXZZ} for stochastic Navier-Stokes equations driven by Poisson random measures.


\vskip 0.2cm
To establish the MDP, we will adopt the weak convergence approach introduced in \cite{Budhiraja-Dupuis}, which has been used by many authors in the framework of non-linear hydrodynamics models driven by Gaussian noise, see for example \cite{Chueshov Millet}, \cite{Bessaih Millet}, \cite{Rockner Zhang} and \cite{Sritharan Sundar}. This approach amounts to establishing the weak convergence of perturbations of equation (\ref{01}) in the random directions of the Cameron-Martin space of the driving Brownian motion. Because of the nature of the second grade fluids models, to get the uniform bound(Lemma \ref{MDP lemma 02}) for the solutions of the random perturbations of the system, we are forced to work with the Galerkin approximations, rather than the equations themselves. The proof is long and quite technical. We are only
able to show that the solution family of the random perturbations of the system (\ref{01}) is tight in a larger space.
However, this turns out to be sufficient for us to prove the weak convergence in the actual state space with a stronger topology.

\vskip 0.3cm

 The organization of this paper is as follows. In Section 2, we introduce some functional spaces and state some estimates which will be used later. In
  Section 3, we formulate the hypotheses and recall the precise definition of solutions. We also collect some results on existence, uniqueness and regularities of the solutions. In Section 4, we establish the central limit theorem. Section 5 is devoted to establishing the moderate deviation principle.

  \vskip 0.2cm

Throughout this paper, $C,C_{p,T},C_{N}...$ denote positive constants depending on some parameters $p,T,N,...$, whose value may be different from line to line.

\section{Preliminaries}

In this section, we will introduce some functional spaces and preliminary facts that are needed in the paper.

For $p\geq 1$ and $k\in\mathbb{N}$, we denote by $L^p(\mathcal{O})$
and $W^{k,p}(\mathcal{O})$ the usual $L^p$ and Sobolev spaces over $\mathcal{O}$, and write $H^k(\mathcal{O}):=W^{k,2}(\mathcal{O})$.
Let $W^{k,p}_0(\mathcal{O})$ be
the closure in $W^{k,p}(\mathcal{O})$ of $\mathcal{C}^\infty_c(\mathcal{O})$ the space of infinitely differentiable functions with compact supports in $\mathcal{O}$, and denote $W^{k,2}_0(\mathcal{O})$ by $H_0^k(\mathcal{O})$. We equip $H^1_0(\mathcal{O})$
with the scalar product
\begin{eqnarray}\label{H-01}
((u,v))=\int_\mathcal{O}\nabla u\cdot\nabla vdx=\sum_{i=1}^2\int_\mathcal{O}\frac{\partial u}{\partial x_i}\frac{\partial v}{\partial x_i}dx,
\end{eqnarray}
where $\nabla$ is the gradient operator. It is well known that the norm $\|\cdot\|$ generated by this scalar product is equivalent to the usual norm of $W^{1,2}(\mathcal{O})$
in $H^1_0(\mathcal{O})$.

In the sequel, we denote the space $\{(x_1,x_2),\ x_1,x_2\in X\}$ by $\mathbb{X}$. Set
\begin{eqnarray}\label{SP-01}
\mathbb{V}={\rm\ closure\ of}\ \mathcal{C} {\rm\ in}\ \mathbb{H}^1(\mathcal{O}).
\end{eqnarray}
We denote by $(\cdot,\cdot)$ and $|\cdot|$ the inner product in $\mathbb{L}^2(\mathcal{O})$(in $\mathbb{H}$) and the induced norm, respectively. The inner product and the norm of $\mathbb{H}^1_0(\mathcal{O})$
are denoted respectively by $((\cdot,\cdot))$ and $\|\cdot\|$. For the space $\mathbb{V}$ we will use the norm generated
by the following scalar product
$$
(u,v)_\mathbb{V}=(u,v)+\alpha ((u,v)),\ \text{for any } u,v\in\mathbb{V}.
$$
The $\rm Poincar\acute{e}$'s inequality implies that, for some $\mathcal{P}>0$
\begin{eqnarray}\label{a}
(\mathcal{P}^2+\alpha)^{-1}\|v\|^2_\mathbb{V}
\leq
\|v\|^2
\leq
\alpha^{-1}\|v\|^2_\mathbb{V},\ \ for\ any\ v\in\mathbb{V}.
\end{eqnarray}

\vskip 0.2cm
We also introduce the following space
\begin{eqnarray*}
\mathbb{W}=\{u\in\mathbb{V}\ \text{such that }curl (u-\alpha\Delta u)\in L^2(\mathcal{O})\},
\end{eqnarray*}
and endow it with the norm generated by the scalar product
\begin{eqnarray}\label{W}
(u,v)_\mathbb{W}=(u,v)_\mathbb{V}+\Big(curl(u-\alpha\Delta u),curl(v-\alpha\Delta v)\Big).
\end{eqnarray}
 The following result can be found
in \cite{CO}, \cite{CG}. See also Lemma 2.1 in \cite{RS-12}.
\vskip 0.3cm

\begin{lem}
Set
$$
\widetilde{\mathbb{W}}=\Big\{v\in\mathbb{H}^3(\mathcal{O})\text{ such that }{\rm div} v=0\ and\ v|_{\partial \mathcal{O}}=0\Big\}.
$$
Then the following (algebraic and topological) identity holds:
\begin{eqnarray}\label{W = W}
\mathbb{W}=\widetilde{\mathbb{W}}.
\end{eqnarray}

Moreover, the following inequality holds: for some $C>0$
\begin{eqnarray}\label{W-02}
    \|v\|^2_{\mathbb{H}^3(\mathcal{O})}
\leq
    C\|v\|^2_\mathbb{W},\ \ \ \forall v\in \widetilde{\mathbb{W}}.
\end{eqnarray}

\end{lem}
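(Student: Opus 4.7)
The result splits into two inclusions plus the quantitative estimate. The easy direction is $\widetilde{\mathbb{W}}\subseteq\mathbb{W}$: if $v\in\mathbb{H}^3(\mathcal{O})$ is divergence free and vanishes on $\partial\mathcal{O}$, then $v\in\mathbb{V}$ automatically, and $\operatorname{curl}(v-\alpha\Delta v)$ involves only derivatives up to order three of $v$, so it lies in $L^2(\mathcal{O})$. The estimate $\|v\|_{\mathbb{W}}\leq C\|v\|_{\mathbb{H}^3}$ is also immediate from this.

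For the reverse inclusion $\mathbb{W}\subseteq\widetilde{\mathbb{W}}$, which carries the content of the lemma, my plan is to bootstrap regularity through two elliptic problems. Given $u\in\mathbb{W}$, set $w:=u-\alpha\Delta u$. Since $u\in\mathbb{V}$ is divergence free in the distributional sense and $\operatorname{div}\Delta u=\Delta\operatorname{div} u=0$, the auxiliary field $w$ satisfies $\operatorname{div} w=0$ as a distribution, and by assumption $\operatorname{curl} w\in L^2(\mathcal{O})$. In two dimensions, a divergence-free vector field on a (connected, $\mathcal{C}^3$) bounded domain can be written, up to a finite-dimensional space of harmonic fields coming from the topology of $\mathcal{O}$, as $w=\nabla^\perp\psi=(\partial_2\psi,-\partial_1\psi)$ for a stream function $\psi$, which then satisfies $-\Delta\psi=\operatorname{curl} w\in L^2(\mathcal{O})$. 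Choosing suitable boundary data for $\psi$ (constant on each boundary component, consistent with $u|_{\partial\mathcal{O}}=0$), the $\mathcal{C}^3$ regularity of $\partial\mathcal{O}$ together with standard elliptic regularity gives $\psi\in H^2(\mathcal{O})$, hence $w\in\mathbb{H}^1(\mathcal{O})$, with a bound $\|w\|_{\mathbb{H}^1}\leq C\|\operatorname{curl}(u-\alpha\Delta u)\|_{L^2}+C\|u\|_{\mathbb V}$.

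Next I would view $w=u-\alpha\Delta u$ as an elliptic equation for $u$:
\begin{equation*}
-\alpha\Delta u+u=w\quad\text{in }\mathcal{O},\qquad u|_{\partial\mathcal{O}}=0,
\end{equation*}
which is a coercive elliptic system of second order with a smooth enough boundary. By the standard $H^{k+2}$ elliptic regularity theorem (applicable because $\partial\mathcal{O}\in\mathcal{C}^3$), $w\in\mathbb{H}^1(\mathcal{O})$ implies $u\in\mathbb{H}^3(\mathcal{O})$, together with a quantitative estimate $\|u\|_{\mathbb{H}^3}\leq C(\|w\|_{\mathbb{H}^1}+\|u\|_{\mathbb V})$. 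Chaining the two estimates yields $\|u\|_{\mathbb{H}^3}^2\leq C\|u\|_{\mathbb{W}}^2$, which is exactly (\ref{W-02}), and in particular $u\in\widetilde{\mathbb{W}}$.

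The main obstacle is the first step: producing the stream-function representation and the associated $H^2$ bound in a domain that is only assumed connected and not simply connected. One has to argue that the harmonic component of the Helmholtz decomposition of $w$ is controlled by $u$ itself (through its boundary trace and divergence-free constraint), so that all pieces remain in $H^1$; this is the usual Hodge/de Rham argument in 2D, and is precisely where the $\mathcal{C}^3$ assumption on $\partial\mathcal{O}$ pays off. Once that is set up, the rest is a routine application of elliptic regularity, and the identification $\mathbb{W}=\widetilde{\mathbb{W}}$ with equivalent norms follows.
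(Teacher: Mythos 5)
There is a genuine gap, and it sits exactly where you locate ``the main obstacle''. Note first that the paper does not prove this lemma at all: it quotes it from \cite{CO}, \cite{CG} and Lemma 2.1 of \cite{RS-12}, where the argument runs through the generalized Stokes system of Lemma \ref{Lem GS} (Solonnikov's estimates), not through a div--curl bound on $w=u-\alpha\Delta u$. Your first step claims $w\in\mathbb{H}^1(\mathcal{O})$ by writing $w=\nabla^\perp\psi$ and solving $-\Delta\psi=\operatorname{curl}w$ with $\psi$ ``constant on each boundary component, consistent with $u|_{\partial\mathcal{O}}=0$''. This does not work: (i) a priori $w=u-\alpha\Delta u$ is only in $\mathbb{H}^{-1}(\mathcal{O})$, so $w$ has no normal trace and no boundary condition for its stream function is available; (ii) even a posteriori the proposed boundary condition is false, since the tangential derivative of $\psi$ along $\partial\mathcal{O}$ equals $w\cdot n=-\alpha\,\Delta u\cdot n$, which does not vanish in general -- the condition $u|_{\partial\mathcal{O}}=0$ gives boundary data for the stream function of $u$, not of $w$. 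Without boundary data you only get $\psi\in H^2_{\rm loc}$, hence $w\in\mathbb{H}^1_{\rm loc}$, and the key estimate $\|w\|_{\mathbb{H}^1}\leq C(|\operatorname{curl}(u-\alpha\Delta u)|+\|u\|_{\mathbb V})$ is not established. Since $w\in\mathbb{H}^1$ is equivalent (via your correct second step) to $u\in\mathbb{H}^3$, this missing estimate is the whole content of the lemma, so the proof is circular at this point. The easy inclusion $\widetilde{\mathbb{W}}\subseteq\mathbb{W}$ and the final elliptic step are fine.

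The standard repair, which is the proof behind the citation, introduces a pressure instead of a stream function for $w$: solve $-\Delta\phi=\operatorname{curl}(u-\alpha\Delta u)$ with $\phi\in H^1_0(\mathcal{O})\cap H^2(\mathcal{O})$ and set $g=\nabla^\perp\phi\in\mathbb{H}^1(\mathcal{O})$, so that $\operatorname{curl}\big(u-\alpha\Delta u-g\big)=0$. By the de Rham argument (modulo the finite-dimensional space of smooth harmonic fields when $\mathcal{O}$ is multiply connected, which causes no loss in the estimate), there is a distribution $q$ with
\begin{equation*}
u-\alpha\Delta u+\nabla q=g\ \ \text{in }\mathcal{O},\qquad \operatorname{div}u=0,\qquad u|_{\partial\mathcal{O}}=0,
\end{equation*}
i.e.\ $u$ solves the generalized Stokes problem (\ref{General Stokes}) with datum $g\in\mathbb{H}^1$. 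Lemma \ref{Lem GS} with $l=1$ (this is where the $\mathcal{C}^3$ boundary and the no-slip condition on $u$ itself enter) then gives $u\in\mathbb{H}^{3}(\mathcal{O})$ together with $\|u\|_{\mathbb{H}^3}\leq C\big(|\operatorname{curl}(u-\alpha\Delta u)|+\|u\|_{\mathbb V}\big)$, which is (\ref{W-02}). In short: the up-to-the-boundary regularity must be extracted from the Stokes system with unknown $u$, not from an elliptic problem for a stream function of $w$, for which no boundary condition exists.
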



If we identify the Hilbert space $\mathbb{V}$ with its dual space $\mathbb{V}^*$ by the Riesz representation, then we obtain a
Gelfand triple
\begin{eqnarray}\label{Gelfand}
\mathbb{W}\subset \mathbb{V}\subset\mathbb{W}^*.
\end{eqnarray}

 We denote by $\langle f,v\rangle$ the dual pair between $f\in\mathbb{W}^*$  and $v\in\mathbb{W}$. Then we have
$$(v,w)_\mathbb{V}=\langle v,w\rangle,\ \ \ \forall v\in\mathbb{V},\ \ \forall w\in\mathbb{W}.$$

Since the injection of $\mathbb{W}$ into $\mathbb{V}$ is compact,
there exists a sequence $\{(e_i,\lambda_i)\in \mathbb{W}\times \mathbb{R}:i\in\mathbb{N}\}$ which has the following properties
\begin{itemize}
\item[(1)] $\{e_i,\ i\in\mathbb{N}\}$ forms an orthonormal basis in $\mathbb{W}$,
and an orthogonal system in $\mathbb{V}$;

\item[(2)] $0<\lambda_i\uparrow\infty$;

\item[(3)] the elements of this sequence are the solutions of the eigenvalue problem
\begin{eqnarray}\label{Basis}
(v,e_i)_{\mathbb{W}}=\lambda_i(v,e_i)_{\mathbb{V}},\ \text{for any }v\in\mathbb{W}.
\end{eqnarray}
\end{itemize}

By Lemma 4.1 in \cite{RS-10}, we have
\begin{eqnarray}\label{eq Basis}
e_i\in\mathbb{H}^4(\mathcal{O}),\ \ \forall i\in\mathbb{N}.
\end{eqnarray}


 Consider the following ``generalized Stokes equations":

\begin{eqnarray}\label{General Stokes}
v-\alpha \Delta v=f\ {\rm in}\ \mathcal{O},\nonumber\\
{\rm div}\ v=0\ {\rm in}\ \mathcal{O},\\
v=0\ {\rm on}\ \partial \mathcal{O}.\nonumber
\end{eqnarray}

The following result can be derived from \cite{SO1}, \cite{SO2} and also can be found in \cite{RS-10} and \cite{RS-12}.
\begin{lem}\label{Lem GS}
Let $\mathcal{O}$ be a connected, bounded open subset of $\mathbb{R}^2$ with a boundary $\partial \mathcal{O}$ of class $\mathcal{C}^l$
and let $f$ be a function in $\mathbb{H}^l$, $l\geq 1$. Then the system (\ref{General Stokes}) has a solution $v\in \mathbb{H}^{l+2}\cap\mathbb{V}$.
Moreover if $f$ is an element of $\mathbb{H}$, then $v$ is unique and the following relations hold
\begin{eqnarray}\label{Eq GS-01}
(v,g)_\mathbb{V}=(f,g),\ \forall g\in \mathbb{V},
\end{eqnarray}
\begin{eqnarray}\label{Eq GS-02}
\|v\|_{\mathbb{H}^{l+2}}\leq C\|f\|_\mathbb{H}.
\end{eqnarray}
\end{lem}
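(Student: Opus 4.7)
The plan is to obtain the solution variationally and then lift regularity by an appeal to the Stokes regularity theory cited in \cite{SO1,SO2}.

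First I would set up the weak formulation. If $v \in \mathbb{H}^2 \cap \mathbb{V}$ satisfies (\ref{General Stokes}), then integration by parts against any test function $g \in \mathbb{V}$ (using $v|_{\partial\mathcal{O}} = 0$ and $\mathrm{div}\, g = 0$, so any gradient of a pressure that one might add to close the system disappears after testing) gives
\[
(v,g) + \alpha((v,g)) = (f,g), \qquad \text{i.e.,} \qquad (v,g)_{\mathbb{V}} = (f,g).
\]
This identifies (\ref{Eq GS-01}) as the natural weak formulation. Because $(\cdot,\cdot)_{\mathbb{V}}$ is by definition an inner product making $\mathbb{V}$ a Hilbert space, and the map $g \mapsto (f,g)$ is a bounded linear functional on $\mathbb{V}$ (indeed, by Cauchy--Schwarz and (\ref{a}), $|(f,g)| \le |f|\,|g| \le \alpha^{-1/2}\,|f|\,\|g\|_{\mathbb{V}}$), the Riesz representation theorem immediately produces a unique $v \in \mathbb{V}$ satisfying (\ref{Eq GS-01}). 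This settles both existence and uniqueness of the weak solution, and delivers the bound $\|v\|_{\mathbb{V}} \le C|f|$ by taking $g = v$.

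Second, I would upgrade regularity. The weak formulation (\ref{Eq GS-01}) says that the distribution $v - \alpha\Delta v - f$ is orthogonal to every divergence-free test field in $\mathbb{V}$, so by the de Rham lemma there exists a pressure $p$ (unique up to constants) such that
\[
v - \alpha\Delta v + \nabla p = f, \qquad \mathrm{div}\, v = 0, \qquad v|_{\partial\mathcal{O}} = 0.
\]
This is exactly a (stationary, zero-order-perturbed) Stokes system on $\mathcal{O}$, to which the Cattabriga/Solonnikov-type elliptic regularity theory recorded in \cite{SO1,SO2} applies: if $f \in \mathbb{H}^{l}$ and $\partial\mathcal{O}$ is of class $\mathcal{C}^l$ with $l \ge 1$, then $v \in \mathbb{H}^{l+2}$ with the corresponding a priori estimate. (For the basic case $l = 0$, $f \in \mathbb{H}$, this yields $v \in \mathbb{H}^2$ with $\|v\|_{\mathbb{H}^2} \le C|f|$, which is the content of (\ref{Eq GS-02}).) The regularity for the pressure $p \in H^{l+1}$ comes out of the same theorem but is not needed for the statement.

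The only genuinely non-trivial step is the regularity lift; the variational part is essentially the Riesz theorem. The delicate point in the regularity step is that $\mathcal{O}$ is merely required to be $\mathcal{C}^l$ (here $l \ge 1$ or $l \ge 3$ depending on how much one needs), so one must verify that the boundary smoothness matches what Solonnikov's theorem requires. Given the $\mathcal{C}^3$ assumption on $\partial\mathcal{O}$ imposed throughout the paper, this is not an obstacle, and the cited references \cite{SO1,SO2} deliver (\ref{Eq GS-02}) directly.
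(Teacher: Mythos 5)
The paper offers no proof of this lemma at all: it is stated as a known result imported from \cite{SO1}, \cite{SO2} (see also \cite{CG}, \cite{RS-10}, \cite{RS-12}), so there is no internal argument to compare yours against. Your route --- Riesz/Lax--Milgram for the weak formulation (\ref{Eq GS-01}), de Rham's lemma to reinstate the pressure so that (\ref{General Stokes}) becomes a genuine Stokes-type system, and then Solonnikov/Cattabriga elliptic regularity to get $v\in\mathbb{H}^{l+2}$ and the bound (\ref{Eq GS-02}) --- is precisely the standard argument used in those references, and it is essentially correct; the only blemishes are cosmetic (for instance, $|g|\le\|g\|_{\mathbb{V}}$ holds trivially from the definition of $(\cdot,\cdot)_{\mathbb{V}}$, so the factor $\alpha^{-1/2}$ you attribute to (\ref{a}) is neither needed nor what (\ref{a}) actually yields, and the higher-order case requires the boundary smoothness bookkeeping you already flag).
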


\vskip 0.3cm
Recall the Stokes operator defined by
\begin{eqnarray}\label{Eq Stoke}
Au=-\mathbb{P}\Delta u,\ \forall u\in D(A)=\mathbb{H}^2(\mathcal{O})\cap\mathbb{V},
\end{eqnarray}
here the mapping $\mathbb{P}:\mathbb{L}^2(\mathcal{O})\rightarrow\mathbb{H}$ is the usual Helmholtz-Leray projector.
Lemma \ref{Lem GS} implies that the operator $(I+\alpha A)^{-1}$ defines an isomorphism form $\mathbb{H}^l(\mathcal{O})\cap\mathbb{H}$
into $\mathbb{H}^{l+2}(\mathcal{O})\cap\mathbb{V}$ provided that $\mathcal{O}$ is of class $\mathcal{C}^l$, $l\geq1$. Moreover, the following
properties hold
\begin{eqnarray*}
& & ((I+\alpha A)^{-1}f,g)_\mathbb{V}=(f,g),\\
& & \|(I+\alpha A)^{-1}f\|_\mathbb{V}\leq C|f|,\ \ \ \forall f\in \mathbb{H}^l(\mathcal{O})\cap\mathbb{V},\ g\in\mathbb{V}.
\end{eqnarray*}
From these facts, $\widehat{A}=(I+\alpha A)^{-1}A$ defines a
continuous linear operator from $\mathbb{H}^l(\mathcal{O})\cap\mathbb{V}$ onto itself for $l\geq2$, and satisfies
$$
(\widehat{A}u,v)_\mathbb{V}=(Au,v)=((u,v)),\ \ \forall u\in\mathbb{W},\ v\in\mathbb{V}.
$$
Hence
$$
(\widehat{A}u,u)_\mathbb{V}=\|u\|, \ \ \ \forall u\in\mathbb{W}.
$$

Let
$$
b(u,v,w)=((u\cdot \nabla)v, w)=\sum_{i,j=1}^2\int_{\mathcal{O}}u_i\frac{\partial v_j}{\partial x_i}w_jdx,\ \ \forall u,v,w\in\mathcal{C},
$$
By the incompressibility condition,
\begin{eqnarray}\label{eq b}
b(u,v,v)=0,\ \ \  \forall u,v\in\mathbb{V},
\end{eqnarray}
Moreover, the following identity holds (see for instance \cite{Bernard} \cite{CO}):
\begin{eqnarray}\label{curl}
((curl\Phi)\times v,w)=b(v,\Phi,w)-b(w,\Phi,v),
\end{eqnarray}
for any smooth function $\Phi,\ v$ and $w$. Now we recall the following estimates which can be found in \cite{RS-12}(Lemma 2.3 and Lemma 2.4),
 and also in  \cite{Bernard} \cite{CO}.

\begin{lem}\label{Lem B}
For any $u,v,w\in\mathbb{W}$, we have
\begin{eqnarray}\label{Ineq B 01}
    |(curl(u-\alpha\Delta u)\times v,w)|
\leq
    C\|u\|_{\mathbb{H}^3}\|v\|_\mathbb{V}\|w\|_{\mathbb{W}},
\end{eqnarray}
and
\begin{eqnarray}\label{Ineq B 02}
    |(curl(u-\alpha\Delta u)\times u,w)|
\leq
    C\|u\|^2_\mathbb{V}\|w\|_{\mathbb{W}}.
\end{eqnarray}
\end{lem}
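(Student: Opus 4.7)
The plan is to reduce both inequalities to estimates on the trilinear form $b$ via the identity $((\mathrm{curl}\,\Phi)\times v,w)=b(v,\Phi,w)-b(w,\Phi,v)$ stated just before the lemma, applied with $\Phi=u-\alpha\Delta u$. This gives
\[
(\mathrm{curl}(u-\alpha\Delta u)\times v,w)=b(v,u,w)-b(w,u,v)-\alpha\bigl[b(v,\Delta u,w)-b(w,\Delta u,v)\bigr],
\]
and everything else reduces to H\"older's inequality together with the two-dimensional Sobolev embeddings $\mathbb{V}\hookrightarrow\mathbb{L}^p$ (any $p<\infty$), $\mathbb{H}^3\hookrightarrow W^{1,\infty}$ and $\mathbb{H}^3\hookrightarrow W^{2,4}$, plus the norm equivalence $\|w\|_{\mathbb{H}^3}\le C\|w\|_\mathbb{W}$ from the preceding lemma.

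For the first inequality, the two ``$u$-terms'' are bounded by $\|v\|_{\mathbb{L}^4}\|\nabla u\|_{\mathbb{L}^2}\|w\|_{\mathbb{L}^4}\le C\|v\|_\mathbb{V}\|u\|_\mathbb{V}\|w\|_\mathbb{V}$, and the two ``$\Delta u$-terms'' (written as $\int v_i\,\partial_i(\Delta u)_j\,w_j\,dx$) are bounded by $\|v\|_{\mathbb{L}^4}\|\nabla\Delta u\|_{\mathbb{L}^2}\|w\|_{\mathbb{L}^4}\le C\|v\|_\mathbb{V}\|u\|_{\mathbb{H}^3}\|w\|_\mathbb{V}$. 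Using $\|w\|_\mathbb{V}\le C\|w\|_\mathbb{W}$ then gives the desired bound.

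The second inequality is the delicate one: with $v=u$, the right-hand side no longer tolerates a factor of $\|u\|_{\mathbb{H}^3}$, so the $\Delta u$-terms require a different treatment. The term $b(w,u,u)$ vanishes by the incompressibility identity, and $|b(u,u,w)|\le C\|u\|_\mathbb{V}^2\|w\|_\mathbb{W}$ is classical. For the bracket $b(u,\Delta u,w)-b(w,\Delta u,u)$, I would integrate by parts in the $\partial_k\partial_k$ hidden inside $\Delta u$, which is allowed because $u,w$ both vanish on $\partial\mathcal{O}$. After the rearrangement the expression splits, schematically, into three types of pieces: (a) terms of the form $\int(\nabla u)(\nabla u)(\nabla w)\,dx$, bounded by $\|\nabla u\|_{\mathbb{L}^2}^2\|\nabla w\|_{L^\infty}\le C\|u\|_\mathbb{V}^2\|w\|_\mathbb{W}$ via $\mathbb{H}^3\hookrightarrow W^{1,\infty}$; (b) terms of the form $\int u\,(D^2 w)(\nabla u)\,dx$, bounded by $\|u\|_{\mathbb{L}^4}\|D^2 w\|_{\mathbb{L}^4}\|\nabla u\|_{\mathbb{L}^2}\le C\|u\|_\mathbb{V}^2\|w\|_\mathbb{W}$ via $\mathbb{H}^3\hookrightarrow W^{2,4}$; and (c) a residual piece of the form $\int w_i\,\partial_i|\nabla u|^2\,dx$, which vanishes after one further integration by parts using $\mathrm{div}\,w=0$ and $w=0$ on $\partial\mathcal{O}$.

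The main obstacle is precisely this redistribution of derivatives in (b) and (c): a direct H\"older estimate on $b(\cdot,\Delta u,\cdot)$ would unavoidably pay a factor $\|u\|_{\mathbb{H}^3}$, which the statement forbids. The saving comes from the extra $\mathbb{W}$-regularity of the test function $w$, which in two dimensions supplies $\nabla w\in L^\infty$ and $D^2 w\in\mathbb{L}^4$, exactly enough to absorb the two derivatives that cannot be carried by $u$.
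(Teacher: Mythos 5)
The paper itself offers no proof of this lemma: it is quoted from \cite{RS-12} (Lemmas 2.3 and 2.4), \cite{Bernard} and \cite{CO}, so there is no in-paper argument to compare against. Your proposal is correct, and it is essentially the standard argument behind those references: reduce to the trilinear form $b$ via $((\mathrm{curl}\,\Phi)\times v,w)=b(v,\Phi,w)-b(w,\Phi,v)$ with $\Phi=u-\alpha\Delta u$ (extended from smooth fields by density, which is harmless here since all the integrals are absolutely convergent for $u,w\in\mathbb{H}^3$, $v\in\mathbb{V}$), then H\"older plus the two-dimensional embeddings $\mathbb{V}\hookrightarrow\mathbb{L}^4$, $\mathbb{H}^3\hookrightarrow W^{1,\infty}\cap W^{2,4}$ and the norm equivalence $\|w\|_{\mathbb{H}^3}\leq C\|w\|_{\mathbb{W}}$. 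The first estimate is immediate this way (in fact your argument gives it with $\|w\|_{\mathbb{V}}$ in place of $\|w\|_{\mathbb{W}}$), and for the second the decisive point is exactly the one you identify: the derivatives in $\Delta u$ must be redistributed onto $w$, since a direct H\"older bound costs $\|u\|_{\mathbb{H}^3}$.

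Two small points of bookkeeping in the second estimate, neither of which is a gap. First, the reduction needs two successive integrations by parts, not one: e.g. writing $b(u,\Delta u,w)=\int u_i\,\partial_i\Delta u_j\,w_j$, an integration by parts in $x_i$ (using $\mathrm{div}\,u=0$ and the vanishing traces) gives $-\int u_i\,\Delta u_j\,\partial_i w_j$, and a further integration by parts in the Laplacian index produces precisely your pieces (a) and (b); treating $b(w,\Delta u,u)$ the same way produces another (a)-type term plus the residual $\tfrac12\int w_i\,\partial_i|\nabla u|^2$, which is your vanishing piece (c). Second, incompressibility of \emph{both} $u$ and $w$ is used in this rearrangement (you invoke $\mathrm{div}\,w=0$ only for (c)). With these terms in hand, your bounds $\|\nabla u\|_{\mathbb{L}^2}^2\|\nabla w\|_{L^\infty}$ for (a) and $\|u\|_{\mathbb{L}^4}\|\nabla u\|_{\mathbb{L}^2}\|D^2w\|_{\mathbb{L}^4}$ for (b), together with $b(w,u,u)=0$ and the classical bound on $b(u,u,w)$, give exactly (\ref{Ineq B 02}).
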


Recall the definition of the bilinear operator $\widehat{B}(\cdot,\cdot):\ \mathbb{W}\times\mathbb{V}\rightarrow\mathbb{W}^*$ as
\begin{eqnarray}\label{Lem B-01}
\widehat{B}(u,v)=(I+\alpha A)^{-1}\Big( curl(u-\alpha \Delta u)\times v\Big).
\end{eqnarray}
\begin{lem}\label{Lem-B-01}
For any $u\in\mathbb{W}$ and $v\in\mathbb{V}$, it holds that
\begin{eqnarray}\label{Eq B-01}
    \|\widehat{B}(u,v)\|_{\mathbb{W}^*}
\leq
    C\|u\|_\mathbb{W}\|v\|_\mathbb{V},
\end{eqnarray}
and
\begin{eqnarray}\label{Eq B-02}
 \|\widehat{B}(u,u)\|_{\mathbb{W}^*}
\leq
    C_B\|u\|^2_\mathbb{V}.
\end{eqnarray}
In addition
\begin{eqnarray}\label{Eq B-03}
 \langle\widehat{B}(u,v),v\rangle=0,
\end{eqnarray}
which implies
\begin{eqnarray}\label{Eq B-04}
 \langle\widehat{B}(u,v),w\rangle=-\langle\widehat{B}(u,w),v\rangle,\ \ \ \forall u,\ v,\ w\in\mathbb{W}.
\end{eqnarray}

\end{lem}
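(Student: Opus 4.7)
The plan is to reduce every claim to the $\mathbb{L}^2$ pairing with $\mathrm{curl}(u-\alpha\Delta u)\times v$ and then invoke the already established estimates in Lemma \ref{Lem B} together with the antisymmetry identity (\ref{curl}). The bridge is Lemma \ref{Lem GS}, which says that $(I+\alpha A)^{-1}$ is the Riesz isomorphism between $\mathbb{H}$ and $\mathbb{V}$ in the sense that $((I+\alpha A)^{-1}f, g)_{\mathbb{V}} = (f, g)$ for any $g\in\mathbb{V}$. Combined with the defining property of the Gelfand triple (\ref{Gelfand}), namely $\langle v, w\rangle = (v, w)_{\mathbb{V}}$ for $v\in\mathbb{V}$ and $w\in\mathbb{W}$, this yields the representation
\[
    \langle \widehat{B}(u,v), w\rangle
=
    \bigl(\mathrm{curl}(u-\alpha\Delta u)\times v,\, w\bigr),\qquad u\in\mathbb{W},\ v\in\mathbb{V},\ w\in\mathbb{W},
\]
which is the identity that drives everything.

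With this representation in hand, the two norm bounds follow immediately. For (\ref{Eq B-01}), apply (\ref{Ineq B 01}) together with the embedding $\|u\|_{\mathbb{H}^3}\le C\|u\|_{\mathbb{W}}$ from (\ref{W-02}) and take supremum over $\|w\|_{\mathbb{W}}\le 1$. For (\ref{Eq B-02}), apply (\ref{Ineq B 02}) directly; here one must just verify that $u\in\mathbb{W}$ ensures both the $\mathbb{H}^3$ regularity (so that (\ref{Ineq B 01}) applies) and the finiteness of $\|u\|_{\mathbb{V}}$ needed on the right. The small subtlety is that (\ref{Ineq B 01})--(\ref{Ineq B 02}) are stated for $u,v,w\in\mathbb{W}$ but one wants (\ref{Eq B-01}) for $v\in\mathbb{V}$; this is handled by density of $\mathbb{W}$ in $\mathbb{V}$ and the continuity of the estimate in $\|v\|_{\mathbb{V}}$.

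For the vanishing property (\ref{Eq B-03}), take $w=v\in\mathbb{W}$ in the representation above and apply the cross-product identity (\ref{curl}) with $\Phi = u - \alpha\Delta u$:
\[
    \bigl((\mathrm{curl}\,\Phi)\times v, v\bigr)
=
    b(v, \Phi, v) - b(v, \Phi, v)
=
    0.
\]
This is the only place where the special symmetry of the nonlinearity is used. Finally, (\ref{Eq B-04}) is a pure polarization argument: by the bilinearity of $\widehat{B}$ in its second argument,
\[
    0
=
    \langle \widehat{B}(u, v+w), v+w\rangle
=
    \langle \widehat{B}(u,v), w\rangle + \langle \widehat{B}(u,w), v\rangle,
\]
since the two diagonal terms vanish by (\ref{Eq B-03}).

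I do not expect a real obstacle here; the only thing that needs a bit of care is the bookkeeping between the spaces $\mathbb{H}$, $\mathbb{V}$, $\mathbb{W}$ and their pairings, i.e.\ making sure that the identification $\langle \widehat{B}(u,v),w\rangle = (\mathrm{curl}(u-\alpha\Delta u)\times v, w)$ is legitimate for all admissible choices of $u,v,w$, and that the density/continuity argument extends (\ref{Eq B-01}) from $v\in\mathbb{W}$ to $v\in\mathbb{V}$.
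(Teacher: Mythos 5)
Your argument is correct and is exactly the route the paper intends (the lemma is stated without a written proof, as a direct consequence of the duality identity $((I+\alpha A)^{-1}f,g)_{\mathbb{V}}=(f,g)$ from Lemma \ref{Lem GS}, the pairing $\langle v,w\rangle=(v,w)_{\mathbb{V}}$, the estimates (\ref{Ineq B 01})--(\ref{Ineq B 02}) with (\ref{W-02}), the identity (\ref{curl}), and polarization for (\ref{Eq B-04})). The only cosmetic slip is citing (\ref{Ineq B 01}) instead of (\ref{Ineq B 02}) in the remark accompanying (\ref{Eq B-02}); the density/continuity extension from $v\in\mathbb{W}$ to $v\in\mathbb{V}$ that you flag is indeed all that needs checking.
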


\section{Hypotheses}

In this section, we will state the precise assumptions on the coefficients and collect some preliminary results from \cite{RS-10} and \cite{RS-12}, which will be used later.

\vskip 0.2cm
Let $F:\mathbb{V}\times[0,T]\rightarrow\mathbb{V}$
and $G:\mathbb{V}\times[0,T]\rightarrow\mathbb{V}^{\otimes m}$ be given measurable maps. We introduce the following conditions:

\vskip 0.2cm
{\bf (F1)}
\begin{eqnarray}\label{F-01}
            F(0,t)=0,
\end{eqnarray}
and
\begin{eqnarray}\label{F-02}
\|F(u_1,t)-F(u_2,t)\|_\mathbb{V}
\leq
C_1\|u_1-u_2\|_\mathbb{V},\ \ \forall u_1,u_2\in\mathbb{V},\ \ t\in[0,T].
\end{eqnarray}

{\bf (F2)} $F$ is differentiable with respect to the first variable, and the derivative $F':\mathbb{V}\times[0,T] \rightarrow L(\mathbb{V})$ is uniformly Lipschitz with respect to the first variable, more precisely, for any $t\in[0,T]$,
\begin{eqnarray}\label{F-03}
\|F'(u_1,t)-F'(u_2,t)\|_{L(\mathbb{V})}
\leq
C_2\|u_1-u_2\|_\mathbb{V},\ \ \forall u_1,u_2\in\mathbb{V},\ \ t\in[0,T].
\end{eqnarray}
By (\ref{F-02}), we conclude that
\begin{eqnarray}\label{F-04}
   \|F'(u,t)\|_{L(\mathbb{V})}\leq C_2.
\end{eqnarray}

{\bf (G)}
\begin{eqnarray}\label{G-01}
G(0,t)=0,
\end{eqnarray}
and
\begin{eqnarray}\label{G-02}
\|G(u_1,t)-G(u_2,t)\|_{\mathbb{V}^{\otimes m}}
\leq
C_3\|u_1-u_2\|_\mathbb{V},\ \ \forall u_1,u_2\in\mathbb{V},\ \ t\in[0,T].
\end{eqnarray}
Where $C_1,C_2,C_3$ are some constants independent of $u,t$.
Set
\begin{eqnarray*}
\widehat{F}'(u,t)=(I+\alpha A)^{-1} F'(u,t).
\end{eqnarray*}

{\bf Condition (F1)}, {\bf Condition (F2)} and {\bf Condition (G)} imply that there exist $C_F$, $C'_F$ and $C_G$
such that
\begin{eqnarray}\label{Jian F1}
\|\widehat{F}(u_1,t)-\widehat{F}(u_2,t)\|_\mathbb{V}\leq C_F\|u_1-u_2\|_\mathbb{V},
\end{eqnarray}
\begin{eqnarray}\label{Jian F2}
\|\widehat{F}'(u_1,t)-\widehat{F}'(u_2,t)\|_{L(\mathbb{V})}\leq C'_F\|u_1-u_2\|_\mathbb{V},
\end{eqnarray}
\begin{eqnarray}\label{Jian G}
\|\widehat{G}(u_1,t)-\widehat{G}(u_2,t)\|_{\mathbb{V}^{\otimes m}}\leq C_G\|u_1-u_2\|_\mathbb{V}.
\end{eqnarray}

Now we recall the concept of solution of the problem (\ref{01}).

\begin{dfn}\label{Def 01}
A $\mathbb{V}$-valued $\{\mathcal{F}_t\}$-adapted stochastic process $u^\varepsilon$ is called a solution of the system (\ref{01}), if the following two conditions hold

1. $u^\varepsilon\in L^p(\Omega,\mathcal{F},P;L^\infty([0,T],\mathbb{W}))\cap L^p(\Omega,\mathcal{F},P;C([0,T],\mathbb{V})),\ 2\leq p<\infty.$


2. For any $v\in\mathbb{W}$, the following identity holds $P$-a.s.
\begin{eqnarray*}
&  &(u^\varepsilon(t)-u^\varepsilon(0),v)_{\mathbb{V}}+\int_0^t[\nu ((u^\varepsilon(s),v))+\big(curl (u^\varepsilon(s)-\alpha \Delta u^\varepsilon(s)\big)\times u^\varepsilon(s),v)]ds\\
&=&
    \int_0^t\big(F(u^\varepsilon(s),s),v\big)ds+\sqrt{\varepsilon}\int_0^t\big(G(u^\varepsilon(s),s),v\big)dW(s),\ \ \forall t\in(0,T]
\end{eqnarray*}
 Or equivalently, $P$-a.s., the following equation
\begin{eqnarray*}
u^\varepsilon(t)+\int_0^t\Big(\nu \widehat{A}u^\varepsilon(s)+\widehat{B}(u^\varepsilon(s),u^\varepsilon(s))\Big)ds
=
u^\varepsilon(0)+\int_0^t\widehat{F}(u^\varepsilon(s),s)ds+\sqrt{\varepsilon}\int_0^t\widehat{G}(u^\varepsilon(s),s)dW(s),
\end{eqnarray*}
holds in $\mathbb{W}^*$ for any $t\in(0,T]$.
\end{dfn}

Applying Galerkin approximation schemes for the system (\ref{01}), Razafimandimby and Sango
obtained the following result (see Theorem 3.4 and Theorem 4.1 in \cite{RS-12}).
\begin{thm}\label{Solution Existence}
Let $u_0\in\mathbb{W}$. Assume conditions {\bf (F1)} and {\bf (G)} hold. Then the system (\ref{01}) (or equivalently the problem
(\ref{Abstract})) has a unique solution $u^\epsilon$. Moreover,
the solution $u^\varepsilon$ admits a version which is continuous in $\mathbb{W}$ with respect to
the weak topology.
\end{thm}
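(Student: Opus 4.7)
The plan is to follow the Galerkin scheme indicated in the statement, since a direct fixed-point argument cannot give the needed $\mathbb{W}$-regularity. Using the eigenbasis $\{e_i\}$ of (\ref{Basis}), set $\mathbb{W}_n=\mathrm{span}\{e_1,\dots,e_n\}$ and project (\ref{Abstract}) onto $\mathbb{W}_n$ via the orthogonal projection $P_n$ (orthogonal in $\mathbb{W}$, hence also with respect to $(\cdot,\cdot)_\mathbb{V}$ because of property (1) of the basis). Because $\widehat{F},\widehat{G}$ are Lipschitz in $\mathbb{V}$ by (\ref{Jian F1})-(\ref{Jian G}) and $\widehat{B}$ is locally Lipschitz on finite-dimensional subspaces of $\mathbb{W}$, the projected system is a finite-dimensional SDE with locally Lipschitz coefficients; standard SDE theory gives a unique strong solution $u^{\e,n}$ up to an explosion time, and the explosion will be ruled out by the a priori estimates below.

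The core step is to derive uniform bounds. First, apply It\^o's formula to $\|u^{\e,n}(t)\|_\mathbb{V}^2$. Using $(\widehat{A}u,u)_\mathbb{V}=\|u\|^2$ and the crucial cancellation $\langle\widehat{B}(u,u),u\rangle=0$ from (\ref{Eq B-03}), together with (\ref{Jian F1}), (\ref{Jian G}) and BDG, one obtains
\[
\ee\sup_{t\le T}\|u^{\e,n}(t)\|_\mathbb{V}^{2p}+\ee\Big(\int_0^T\|u^{\e,n}(s)\|^2\,ds\Big)^p\le C_{p,T}(1+\|u_0\|_\mathbb{V}^{2p}).
\]
The harder bound is in $\mathbb{W}$. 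Testing the Galerkin equation against $e_i$ and using (\ref{Basis}) shows that $\|u^{\e,n}\|_\mathbb{W}^2=\sum_i\lambda_i|(u^{\e,n},e_i)_\mathbb{V}|^2$, so It\^o on this quantity produces a term $\sum_i\lambda_i\langle\widehat{B}(u^{\e,n},u^{\e,n}),e_i\rangle(u^{\e,n},e_i)_\mathbb{V}$; using (\ref{Eq B-04}) one rewrites this as $-\langle\widehat{B}(u^{\e,n},v_n),u^{\e,n}\rangle$ with $v_n:=\sum_i\lambda_i(u^{\e,n},e_i)_\mathbb{V}e_i$, and then (\ref{Ineq B 02}) combined with (\ref{W-02}) controls it by $C\|u^{\e,n}\|_\mathbb{V}^2\|u^{\e,n}\|_\mathbb{W}$. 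A Gronwall/BDG chain, using the already proved $\mathbb{V}$-bound, yields
\[
\ee\sup_{t\le T}\|u^{\e,n}(t)\|_\mathbb{W}^{2p}\le C_{p,T}(1+\|u_0\|_\mathbb{W}^{2p}).
\]
This in particular excludes explosion, so $u^{\e,n}$ is global.

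Next I would pass to the limit. The uniform bounds together with a standard fractional Sobolev estimate on $\int_0^\cdot(\nu\widehat{A}u^{\e,n}+\widehat{B}(u^{\e,n},u^{\e,n}))ds$ in $\mathbb{W}^*$ and on the stochastic integral give tightness of the laws of $u^{\e,n}$ in $C([0,T];\mathbb{V})\cap L^2([0,T];\mathbb{W})$ (using the compact embedding $\mathbb{W}\hookrightarrow\mathbb{V}$ through an Aubin--Lions/Skorokhod argument). After passage to a subsequence and possibly enlarging the probability space, we identify the limit as a martingale solution; the nonlinear term passes to the limit because strong convergence in $C([0,T];\mathbb{V})$ together with the bilinear estimate (\ref{Eq B-01}) makes $\widehat{B}(u^{\e,n},u^{\e,n})\to\widehat{B}(u^\e,u^\e)$ in $L^2([0,T];\mathbb{W}^*)$.

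For uniqueness (which upgrades martingale to strong solutions through Yamada--Watanabe), let $u_1,u_2$ be two solutions with the same initial data and write $w=u_1-u_2$. It\^o on $\|w\|_\mathbb{V}^2$ uses (\ref{Eq B-03})--(\ref{Eq B-04}) to rewrite
\[
\langle\widehat{B}(u_1,u_1)-\widehat{B}(u_2,u_2),w\rangle=\langle\widehat{B}(w,u_1),w\rangle,
\]
bounded by (\ref{Ineq B 01}) and (\ref{W-02}) by $C\|u_1\|_{\mathbb{H}^3}\|w\|_\mathbb{V}^2\le C\|u_1\|_\mathbb{W}\|w\|_\mathbb{V}^2$, and the remaining drift and diffusion terms are Lipschitz by (\ref{Jian F1}), (\ref{Jian G}); Gronwall applied along the stopping times $\tau_N=\inf\{t:\|u_1(t)\|_\mathbb{W}+\|u_2(t)\|_\mathbb{W}\ge N\}$ gives $w\equiv 0$. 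Finally, weak continuity in $\mathbb{W}$ follows from $u^\e\in C([0,T];\mathbb{V})\cap L^\infty([0,T];\mathbb{W})$: for any $w\in\mathbb{W}$ the scalar product $(u^\e(t),w)_\mathbb{W}$ agrees with $\lambda(w)(u^\e(t),w)_\mathbb{V}$ on eigenvectors, which is continuous in $t$, and density of $\mathrm{span}\{e_i\}$ in $\mathbb{W}$ together with the essential $\mathbb{W}$-bound extends continuity to all test elements.

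The main obstacle is the $\mathbb{W}$-estimate: unlike Navier--Stokes, the second grade nonlinearity loses a full derivative, so the only way I see to control $\|u^{\e,n}\|_\mathbb{W}$ is to exploit the antisymmetry (\ref{Eq B-04}) together with the precise eigenbasis structure (\ref{Basis}), which is exactly why the Galerkin scheme (with this particular basis) is forced rather than a generic fixed point in $\mathbb{W}$.
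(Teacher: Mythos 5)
You should first note that the paper does not prove Theorem \ref{Solution Existence} at all: it is quoted from Theorem 3.4 and Theorem 4.1 of \cite{RS-12}, whose proof is indeed the Galerkin scheme with the special basis (\ref{Basis}); the closest in-paper analogue of that argument is the proof of Lemma \ref{MDP lemma 02}. Your overall strategy (Galerkin, $\mathbb{V}$- and $\mathbb{W}$-a priori bounds, tightness/Skorokhod, pathwise uniqueness plus Yamada--Watanabe, weak $\mathbb{W}$-continuity) is therefore the right one in outline. However, your key step, the uniform $\mathbb{W}$-bound, has a genuine gap. The spectral identity you use is wrong: by (\ref{Basis}) one has $(u,e_i)_{\mathbb{W}}=\lambda_i(u,e_i)_{\mathbb{V}}$ with $\{e_i\}$ orthonormal in $\mathbb{W}$, so $\sum_i\lambda_i(u,e_i)_{\mathbb{V}}^2=\|u\|_{\mathbb{V}}^2$ (this is exactly the identity recorded in the proof of Lemma \ref{MDP lemma 02}), whereas $\|u\|_{\mathbb{W}}^2=\sum_i\lambda_i^2(u,e_i)_{\mathbb{V}}^2$. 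With your single-$\lambda_i$ weight the element $v_n=\sum_i\lambda_i(u^{\varepsilon,n},e_i)_{\mathbb{V}}e_i$ is nothing but $u^{\varepsilon,n}$ itself, the nonlinear term vanishes by (\ref{Eq B-03}), and you have merely re-derived the $\mathbb{V}$-estimate. With the correct $\lambda_i^2$ weights the nonlinear contribution is $\langle\widehat{B}(u,u),\Lambda u\rangle$ with $\Lambda u=\sum_i\lambda_i(u,e_i)_{\mathbb{W}}e_i$, and neither the antisymmetry (\ref{Eq B-04}) nor (\ref{Ineq B 02})--(\ref{W-02}) closes the estimate, because $\|\Lambda u\|_{\mathbb{W}}$ (even $\|\Lambda u\|_{\mathbb{V}}$) is not controlled by $\|u\|_{\mathbb{W}}$: the nonlinearity loses a full derivative, as you yourself observe.

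The actual mechanism in \cite{RS-12}, reproduced in this paper for the shifted equation in Lemma \ref{MDP lemma 02}, is different: one estimates $\|u\|_{\ast}=|curl(u-\alpha\Delta u)|$ directly, by converting the Galerkin equation into an equation for the $\mathbb{W}$-inner products through the generalized Stokes problem of Lemma \ref{Lem GS} (the auxiliary functions $v^M$ and $\widetilde{G}^{\varepsilon}$), and then exploiting the two-dimensional vorticity-transport cancellation $\big(curl(u-\alpha\Delta u),curl\big(curl(u-\alpha\Delta u)\times u\big)\big)=0$ together with $curl\big(curl(a-\alpha\Delta a)\times v\big)=-\Delta(a-\alpha\Delta a)\times v$ (see the displays preceding (\ref{MDP curl 01})). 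It is this scalar-vorticity structure of the second grade nonlinearity, not the antisymmetry of $\widehat{B}$, that makes the $\mathbb{W}$-level estimate close; without it your Galerkin bound, hence global solvability of the approximations and the passage to the limit, does not go through. A minor further point: in your uniqueness step the bound $|\langle\widehat{B}(w,u_1),w\rangle|\le C\|u_1\|_{\mathbb{W}}\|w\|_{\mathbb{V}}^2$ is correct and is the one used throughout the paper (e.g. in (\ref{CLT lemma 01}) and (\ref{V 01})), but it does not follow from (\ref{Ineq B 01}) as you cite it -- that estimate gives $C\|w\|_{\mathbb{H}^3}\|u_1\|_{\mathbb{V}}\|w\|_{\mathbb{W}}$, quadratic in the $\mathbb{W}$-norm of $w$; one needs the identity $((curl\,\Phi)\times v,w)=b(v,\Phi,w)-b(w,\Phi,v)$ and integration by parts to move derivatives off $w$.
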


Recall the solution $u^0$ given in (\ref{Deterministic}). By Theorem 5.6 in \cite{CG}, we have the following lemma.
\begin{lem}\label{Regularity}
If we assume that the boundary $\partial \mathcal{O}$ is of class $\mathcal{C}^{3,1}$ and the initial value $u_0\in{\mathbb{V}\cap{\mathbb{H}}^4(\mathcal{O})}$, then $u^0$ belongs to $L^\infty([0,T],{\mathbb{V}\cap{\mathbb{H}}^4(\mathcal{O})})$, i.e.
\begin{eqnarray}\label{Regularity estimate}
\sup_{t\in[0,T]}\|u^0(t)\|_{{\mathbb{H}}^4(\mathcal{O})}\leq C.
\end{eqnarray}
\end{lem}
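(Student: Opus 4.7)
The plan is to reproduce the Galerkin a priori estimates of \cite{RS-12} one regularity level higher than is needed for Theorem \ref{Solution Existence}. Let $u_n(t)\in\mathrm{span}\{e_1,\dots,e_n\}$ be the $n$-th Galerkin approximation of $u^0$, and derive uniform-in-$n$ bounds successively in $\mathbb{V}$, $\mathbb{W}$ and $\mathbb{H}^4(\mathcal{O})$, then pass to the limit. The $\mathbb{V}$-bound follows at once by testing against $u_n$, using the cancellation $\langle\widehat{B}(u_n,u_n),u_n\rangle=0$ from (\ref{Eq B-03}), the Lipschitz bound (\ref{Jian F1}), and Gronwall's inequality. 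The $\mathbb{W}$-bound is obtained by introducing the scalar vorticity $q_n=\mathrm{curl}(u_n-\alpha\Delta u_n)$: taking the curl of the equation in (\ref{01}) with $\varepsilon=0$ converts the nonlinear term into a pure transport $u_n\cdot\nabla q_n$ in dimension two, after which testing against $q_n$, using $\mathrm{div}\,u_n=0$, and invoking the $\mathbb{V}$-bound gives a uniform bound on $|q_n|_{L^2}$, hence on $\|u_n\|_\mathbb{W}$, which by (\ref{W = W})--(\ref{W-02}) is equivalent to a uniform $\mathbb{H}^3$-bound.

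The main obstacle is to upgrade this to $\mathbb{H}^4$. The idea is to differentiate the $q_n$-equation once more in space and rerun the energy argument for $\nabla q_n$. The commutator $[\nabla,\,u_n\cdot\nabla]q_n\sim (\nabla u_n)\nabla q_n$ is the delicate term; it is controlled by the two-dimensional Sobolev embedding $\mathbb{H}^2\hookrightarrow L^\infty$ against the previously established $\mathbb{H}^3$-bound on $u_n$, while the contribution of $F$ is handled by (\ref{Jian F1}) and (\ref{F-03}). The hypothesis $u_0\in\mathbb{H}^4(\mathcal{O})$ enters precisely at this stage: it guarantees that $q_n(0)$ is uniformly bounded in $H^1$, supplying the initial datum for the Gronwall inequality on $|\nabla q_n|_{L^2}$.

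The resulting uniform $H^1$-bound on $q_n$, combined with $\mathrm{div}(u_n-\alpha\Delta u_n)=0$, gives a uniform $\mathbb{H}^2$-bound on $u_n-\alpha\Delta u_n$ by standard div-curl elliptic regularity. Lemma \ref{Lem GS} with $l=2$ then translates this into a uniform $\mathbb{H}^4$-bound on $u_n$; the assumption $\partial\mathcal{O}\in\mathcal{C}^{3,1}$ is exactly what makes this application of Lemma \ref{Lem GS} legitimate at that level. Finally, weak-$*$ compactness and lower semicontinuity of norms, together with uniqueness of $u^0$, transfer the uniform bound to the limit and yield (\ref{Regularity estimate}).
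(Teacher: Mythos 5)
You should first be aware that the paper does not prove this lemma at all: it is quoted verbatim from Theorem 5.6 of \cite{CG}, i.e.\ from the deterministic regularity theory for second grade fluids, so your proposal is in effect an attempt to reconstruct that cited result. Your overall scheme --- $\mathbb{V}$- and $\mathbb{W}$-bounds, an $H^1$-estimate for the vorticity $q=\mathrm{curl}(u-\alpha\Delta u)$ via its transport equation, and then an elliptic step converting $q\in H^1$ into $u\in\mathbb{H}^4$ --- is indeed the classical route followed in \cite{CG}. As written, however, two steps are genuinely glossed and do not close.

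First, the final elliptic step is not ``standard div--curl regularity''. Knowing $\mathrm{div}(u-\alpha\Delta u)=0$ and $\mathrm{curl}(u-\alpha\Delta u)\in H^1$ does not give $u-\alpha\Delta u\in\mathbb{H}^2$ up to the boundary: the only boundary information is $u=0$ on $\partial\mathcal{O}$, no trace of $\Delta u$ (nor of the pressure appearing in the unprojected equation) is available, and $\mathcal{O}$ is only assumed connected, so harmonic fields must also be accounted for. What is actually needed is the higher-order analogue of (\ref{W = W})--(\ref{W-02}): $v\in\mathbb{V}$ with $\mathrm{curl}(v-\alpha\Delta v)\in H^1$ and $\partial\mathcal{O}\in\mathcal{C}^{3,1}$ implies $v\in\mathbb{H}^4$ with the corresponding estimate. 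That statement is itself one of the nontrivial lemmas of \cite{CG} (proved by solving an auxiliary div--curl system with prescribed normal trace and then invoking the generalized Stokes regularity of Lemma \ref{Lem GS}), so at this point you are implicitly re-using the very theory you set out to reprove. Second, the forcing term is not controlled by the tools you cite: in the $H^1$-estimate for $q$ the source is $\nabla\,\mathrm{curl}\,F(u^0(t),t)$, so one needs $F(u^0(t),t)$ bounded in $\mathbb{H}^2$, whereas (F1)--(F2), and hence (\ref{Jian F1}) and (\ref{F-03}), only make $F$ a $\mathbb{V}$-valued Lipschitz map, giving $\mathrm{curl}\,F\in L^2$ only; this term cannot be handled as claimed (indeed, the paper's appeal to \cite{CG} quietly presupposes such extra regularity of the force). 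A smaller technical point: you cannot take the curl of the Galerkin system directly, since the projection onto ${\rm span}(e_1,\dots,e_n)$ does not commute with the curl; one must either use the spectral basis (\ref{Basis}) and the $\lambda_i$-multiplication device, as in the proof of Lemma \ref{MDP lemma 02}, or perform the vorticity estimate on $u^0$ itself after its regularity has been justified.
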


\vskip 0.2cm
To obtain the moderate deviation principles, additionally we impose the following hypothese throughout.

\vskip 0.2cm
{\bf (I)} the initial value $u_0\in{\mathbb{V}\cap{\mathbb{H}}^4(\mathcal{O})}$ and the boundary $\partial \mathcal{O}$ is of class $\mathcal{C}^{3,1}$.

\section{Central Limit Theorem}

\vskip 0.2cm
In this section, we will establish the central limit theorem. Let $u^{\varepsilon}$ and $u^{0}$ be the unique solution of (\ref{Abstract}) and (\ref{Deterministic}) respectively. The following estimates follow from Lemma 3.7 in \cite{RS-12}.
\begin{lem}
There exists a constant $\varepsilon_{0}>0$ such that, for any $2\leq p<\infty$
\begin{eqnarray}
\sup_{\varepsilon\in(0,\varepsilon_0)}E\Big[\sup_{s\in[0,T]}\|u^\varepsilon(s)\|^p_\mathbb{W}\Big]\leq C_p,\\
\sup_{s\in[0,T]}\|u^0(s)\|^p_\mathbb{W}\leq C_p.
\end{eqnarray}
\end{lem}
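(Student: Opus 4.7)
The plan is to derive both bounds by the energy method applied to Galerkin approximations, then pass to the limit. Set $\mathbb{W}_n = \mathrm{span}\{e_1,\ldots,e_n\}$ and let $u^\varepsilon_n$ denote the Galerkin approximation to $u^\varepsilon$ solving the corresponding finite-dimensional projected SDE in $\mathbb{W}_n$; since each $e_i\in\mathbb{H}^4(\mathcal{O})$ by (\ref{eq Basis}), $u^\varepsilon_n$ is genuinely $\mathbb{W}$-valued and Itô's formula for $\|u^\varepsilon_n(t)\|^p_{\mathbb{W}}$ applies rigorously in finite dimensions. Dropping the stochastic term gives the analogous scheme for $u^0$.

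For $p=2$, I would apply Itô to $\|u^\varepsilon_n(t)\|^2_{\mathbb{W}}$ and decompose using $\|v\|^2_{\mathbb{W}} = \|v\|^2_{\mathbb{V}} + \|curl(v-\alpha\Delta v)\|^2$. The viscous term contributes a non-negative dissipation ($\nu\|u^\varepsilon_n\|^2$ plus higher-order $curl$ terms produced by integration by parts). The crucial step is that the entire nonlinear contribution vanishes: the $\mathbb{V}$-inner-product piece is killed by (\ref{Eq B-03}), while the $curl$-inner-product piece is handled by setting $Z = curl(u^\varepsilon_n-\alpha\Delta u^\varepsilon_n)$, using the 2D identity $curl(Z\times u)=u\cdot\nabla Z$ valid for divergence-free $u$, and then noting $(Z,u\cdot\nabla Z)=\tfrac{1}{2}\int u\cdot\nabla Z^2\,dx=0$ by integration by parts. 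The drift and Itô correction are controlled via (\ref{Jian F1}) and (\ref{Jian G}) combined with the two-derivative smoothing of $(I+\alpha A)^{-1}$ (Lemma \ref{Lem GS}), giving linear-in-$\|u^\varepsilon_n\|_{\mathbb{W}}$ bounds. Applying BDG to the stochastic integral and Gronwall's inequality then yields
\begin{eqnarray*}
E\Big[\sup_{s\leq t}\|u^\varepsilon_n(s)\|^2_{\mathbb{W}}\Big]
\leq
C\|u_0\|^2_{\mathbb{W}}+C\int_0^t E\Big[\sup_{r\leq s}\|u^\varepsilon_n(r)\|^2_{\mathbb{W}}\Big]\,ds,
\end{eqnarray*}
which closes to a bound uniform in $n$ and in $\varepsilon\in(0,\varepsilon_0)$.

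For higher moments $p\geq 2$, apply Itô to $(\|u^\varepsilon_n\|^2_{\mathbb{W}})^{p/2}$; the same cancellation of the nonlinearity persists, and the extra powers are absorbed by Hölder and Young. Passing to the limit $n\to\infty$ by weak-$*$ lower semicontinuity of the $L^p(\Omega;L^\infty([0,T];\mathbb{W}))$ norm under $u^\varepsilon_n \rightharpoonup u^\varepsilon$ transfers the estimate to the full solution; the same scheme without the stochastic integral delivers the bound on $u^0$. The main technical obstacle is establishing the complete vanishing of the nonlinear contribution in the $\mathbb{W}$-norm Itô expansion: unlike the $\mathbb{V}$-case where cancellation is immediate from (\ref{Eq B-03}), the $curl$-component requires the explicit 2D computation above. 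Without this cancellation one could only use (\ref{Ineq B 02}), which after Young's inequality leaves a term of order $\|u\|^4_{\mathbb{V}}$ that would demand separate a priori control; bypassing this via the direct cancellation is what allows a clean Gronwall closure uniform in $\varepsilon$.
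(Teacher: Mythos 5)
Your proposal is correct in outline, but note that the paper itself gives no proof of this lemma: it simply quotes Lemma 3.7 of \cite{RS-12} (for $u^\varepsilon$; the bound for $u^0$ is the same energy estimate without noise, cf.\ also Lemma \ref{Regularity}). What you have written is essentially a reconstruction of that cited proof, and it is also precisely the machinery this paper deploys later, in the proof of Lemma \ref{MDP lemma 02}: Galerkin approximation in the special basis, It\^o's formula for the $\mathbb{V}$-part plus a separate identity for $\|\cdot\|_*=|curl(\cdot-\alpha\Delta\cdot)|$, cancellation of the nonlinearity in both components, BDG and Gronwall, and transfer to the limit by weak-$*$ lower semicontinuity as in Theorem 3.4 of \cite{RS-12}. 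Two points that your sketch glosses over, and which the paper's Lemma \ref{MDP lemma 02} handles explicitly, deserve care. First, the It\^o expansion of the $curl$-component is not obtained by "applying It\^o to $\|u^\varepsilon_n\|^2_{\mathbb{W}}$" directly: one multiplies the $i$-th Galerkin equation by $\lambda_i$ and uses the eigenrelation $(v,e_i)_{\mathbb{W}}=\lambda_i(v,e_i)_{\mathbb{V}}$ together with the auxiliary generalized Stokes problem of Lemma \ref{Lem GS} (the $v^M$, $\widetilde G^\varepsilon$ construction) to convert the drift and noise terms into $\mathbb{W}$-pairings; only then does your 2D cancellation $(Z,u\cdot\nabla Z)=0$, which is correct for divergence-free $u$ vanishing on $\partial\mathcal{O}$, apply to the full nonlinearity. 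Second, the viscous term is not simply "non-negative dissipation" in the $curl$-component: writing $-\nu\Delta u=\frac{\nu}{\alpha}(u-\alpha\Delta u)-\frac{\nu}{\alpha}u$ produces $\frac{\nu}{\alpha}\|u\|_*^2$ plus a cross term $-\frac{\nu}{\alpha}\big(curl(u-\alpha\Delta u),curl\,u\big)$, which must be absorbed by Young's inequality and the already-obtained $\mathbb{V}$-estimate before Gronwall closes. These are presentational rather than substantive gaps; with them filled in, your argument is the standard proof that the paper outsources to \cite{RS-12}.
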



The next result is concerned with the convergence of $u^\varepsilon$ as $\varepsilon\rightarrow 0$.
\begin{prp}\label{CLT prp}
There exists a constant $\varepsilon_{0}>0$ such that, for all $0\leq\varepsilon\leq\varepsilon_{0}$,
\begin{eqnarray}\label{CLT prop}
E\Big[\sup_{s\in[0,T]}\|u^{\varepsilon}(s)-u^{0}(s)\|^p_{\mathbb{V}}\Big]\leq\varepsilon^{\frac{p}{2}}C_p,\ \text{for any }2\leq p<\infty,
\end{eqnarray}
\ \ where $C_p$ is a constant depending on $p$.
\end{prp}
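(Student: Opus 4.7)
The plan is to set $Z^\varepsilon := u^\varepsilon - u^0$, derive its evolution equation, apply Itô's formula to $\|Z^\varepsilon\|_\mathbb{V}^2$, and close by combining the Burkholder--Davis--Gundy inequality with a Grönwall-type argument. Subtracting \eqref{Deterministic} from \eqref{Abstract} and using bilinearity to split $\widehat B(u^\varepsilon,u^\varepsilon) - \widehat B(u^0,u^0) = \widehat B(Z^\varepsilon,u^\varepsilon) + \widehat B(u^0,Z^\varepsilon)$, the process $Z^\varepsilon$ satisfies an evolution equation in $\mathbb W^*$ starting from $Z^\varepsilon(0)=0$. In the Itô expansion of $\|Z^\varepsilon\|_\mathbb{V}^2$ the Stokes contribution $-2\nu\|Z^\varepsilon\|^2$ is nonpositive and is discarded; by \eqref{Eq B-03} the term $\langle\widehat B(u^0,Z^\varepsilon),Z^\varepsilon\rangle$ vanishes; and \eqref{Eq B-04} together with \eqref{Eq B-02} gives
\[
|\langle \widehat B(Z^\varepsilon,u^\varepsilon),Z^\varepsilon\rangle| = |\langle\widehat B(Z^\varepsilon,Z^\varepsilon),u^\varepsilon\rangle| \leq C\|Z^\varepsilon\|_\mathbb{V}^2\,\|u^\varepsilon\|_\mathbb{W}.
\]
The drift difference is controlled by \eqref{Jian F1} and the Itô correction by $\varepsilon C_G^2\|u^\varepsilon\|_\mathbb{V}^2$ via \eqref{Jian G}, yielding the pathwise bound
\[
\|Z^\varepsilon(t)\|_\mathbb{V}^2 \leq C\int_0^t(1+\|u^\varepsilon(s)\|_\mathbb{W})\|Z^\varepsilon(s)\|_\mathbb{V}^2\,ds + \varepsilon C\int_0^t\|u^\varepsilon(s)\|_\mathbb{V}^2\,ds + 2\sqrt\varepsilon\,M(t),
\]
where $M(t):=\int_0^t(\widehat G(u^\varepsilon,s),Z^\varepsilon(s))_\mathbb{V}\,dW(s)$.

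Raising this inequality to the $p/2$-th power, taking $\sup_{s\leq t}$ and expectation, Jensen's inequality controls the first integral while BDG gives
\[
E\Bigl[\sup_{s\leq t}(2\sqrt\varepsilon|M(s)|)^{p/2}\Bigr] \leq C_p\,\varepsilon^{p/4}E\Bigl[\sup_{s\leq t}\|Z^\varepsilon\|_\mathbb{V}^{p/2}\Bigl(\int_0^T\|u^\varepsilon\|_\mathbb{V}^2\,ds\Bigr)^{p/4}\Bigr].
\]
Young's inequality absorbs a small multiple of $E[\sup_{s\leq t}\|Z^\varepsilon\|_\mathbb{V}^p]$ on the left-hand side and leaves a remainder of order $\varepsilon^{p/2}$, controlled by the uniform $L^p(\Omega)$-bound on $\sup_{[0,T]}\|u^\varepsilon\|_\mathbb{V}$ provided by the preceding lemma. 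The $\varepsilon\int_0^t\|u^\varepsilon\|_\mathbb{V}^2\,ds$ contribution likewise produces an $\varepsilon^{p/2}C_p$ term.

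The main obstacle is the random, only-polynomially-integrable coefficient $\|u^\varepsilon(s)\|_\mathbb{W}$ in the Grönwall factor, which precludes a direct deterministic Grönwall closure. I would handle it by introducing the stopping times $\tau_N := \inf\{t\in[0,T] : \|u^\varepsilon(t)\|_\mathbb{W} > N\}\wedge T$, on whose interior $\|u^\varepsilon\|_\mathbb{W}$ is bounded by $N$; deterministic Grönwall then yields $E[\sup_{s\leq t\wedge\tau_N}\|Z^\varepsilon\|_\mathbb{V}^p] \leq C_{p,N}\,\varepsilon^{p/2}$. The passage $N\to\infty$ with an $\varepsilon$-uniform constant is effected by combining this localized bound with the polynomial tail estimate $P(\tau_N<T)\leq N^{-r}E[\sup_{[0,T]}\|u^\varepsilon\|_\mathbb{W}^r]$ from the preceding lemma together with a coarse a priori bound on $E[\sup\|Z^\varepsilon\|_\mathbb{V}^{2p}]$ (from $\mathbb W\hookrightarrow\mathbb V$ and the uniform $\mathbb W$-moments of $u^\varepsilon, u^0$), then optimizing in $N$. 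Equivalently, one may invoke a stochastic Grönwall lemma (cf.\ the approach in \cite{Wang1}) directly on the moment inequality, which absorbs the random coefficient in a single step and closes the estimate.
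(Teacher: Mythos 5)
Your Itô set-up, the treatment of the $\widehat F$- and $\widehat G$-terms, the BDG/Young absorption and the overall Grönwall strategy all match the paper's proof. The gap is in the trilinear term. Stopping at the bound $|\langle\widehat B(Z^\varepsilon,Z^\varepsilon),u^\varepsilon\rangle|\le C\|Z^\varepsilon\|_\mathbb{V}^2\|u^\varepsilon\|_\mathbb{W}$ leaves a \emph{random} Grönwall coefficient, and neither of your two remedies recovers the rate $\varepsilon^{p/2}$ asserted in \eqref{CLT prop}. With $\tau_N=\inf\{t:\|u^\varepsilon(t)\|_\mathbb{W}>N\}$ the localized Grönwall constant is of order $e^{CNT}$, while the tail bound $P(\tau_N<T)\le N^{-r}E[\sup_t\|u^\varepsilon(t)\|_\mathbb{W}^r]$ decays only polynomially in $N$; minimizing $e^{CNT}\varepsilon^{p/2}+C_rN^{-r/2}$ forces $N\sim\log(1/\varepsilon)$ and yields only logarithmic decay in $\varepsilon$, not $\varepsilon^{p/2}$ --- and the precise rate is what is used later (e.g.\ to get the uniform moment bound on $V^\varepsilon=(u^\varepsilon-u^0)/\sqrt{\varepsilon}$ in the CLT). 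The stochastic Grönwall alternative founders on the same point: the versions that keep the supremum inside the expectation require exponential integrability of $\int_0^T\big(1+\|u^\varepsilon(s)\|_\mathbb{W}\big)ds$, whereas the available a priori estimates give only polynomial moments of $\sup_t\|u^\varepsilon(t)\|_\mathbb{W}$, so this step is unsupported.

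The missing idea is one further cancellation. Since $\langle\widehat B(Z^\varepsilon,Z^\varepsilon),Z^\varepsilon\rangle=0$ by \eqref{Eq B-03}, you have
$\langle\widehat B(Z^\varepsilon,u^\varepsilon),Z^\varepsilon\rangle
=\langle\widehat B(Z^\varepsilon,u^0),Z^\varepsilon\rangle
=-\langle\widehat B(Z^\varepsilon,Z^\varepsilon),u^0\rangle$ by \eqref{Eq B-04}, and then \eqref{Eq B-02} gives the bound $C\|Z^\varepsilon\|_\mathbb{V}^2\,\|u^0(s)\|_\mathbb{W}$, where $\sup_{s\in[0,T]}\|u^0(s)\|_\mathbb{W}$ is deterministic and finite. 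This is exactly what the paper does: the Grönwall coefficient becomes $\|u^0(s)\|_\mathbb{W}+1$, ordinary (deterministic) Grönwall closes the estimate with a constant independent of $\varepsilon$, and the only localization needed is a stopping time on $\|u^\varepsilon-u^0\|_\mathbb{V}$ itself, introduced merely to justify finiteness of the quantities being manipulated and removed by letting $J\to\infty$. With this one change your argument goes through and coincides with the paper's.
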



\begin{proof}
For any integer $J \geq 1$ we introduce the stopping time 
$$\tau_{J}=\inf\{t\geq 0;\|u^{\varepsilon}(t)-u^{0}(t)\|_{\mathbb{V}}\geq J\}.$$

\vskip 0.2cm
Applying ${\rm It\hat{o}}$'s formula to $\|u^{\varepsilon}(t)-u^{0}(t)\|_{\mathbb{V}}^p$ for $p\geq2$, we have
\begin{eqnarray*}
& &d\|u^{\varepsilon}(t)-u^{0}(t)\|_{\mathbb{V}}^{p}
=d(\|u^{\varepsilon}(t)-u^{0}(t)\|_{\mathbb{V}}^{2})^{\frac{p}{2}}\nonumber\\
&=&\frac{p}{2}\|u^{\varepsilon}(t)-u^{0}(t)\|_{\mathbb{V}}^{p-2}\Big(-2\nu\|u^{\varepsilon}(t)-u^{0}(t)\|^{2}dt\nonumber\\
      & &-2\langle\widehat{B}(u^{\varepsilon}(t),u^{\varepsilon}(t))-\widehat{B}(u^{0}(t),u^{0}(t)),u^{\varepsilon}(t)-u^{0}(t)\rangle dt\nonumber\\
      & &+2\big(\widehat{F}(u^{\varepsilon}(t),t)-\widehat{F}(u^{0}(t),t),u^{\varepsilon}(t)-u^{0}(t)\big)_{\mathbb{V}}dt\nonumber\\
      & &+2\sqrt{\varepsilon}\big(\widehat{G}(u^{\varepsilon}(t),t),u^{\varepsilon}(t)-u^{0}(t)\big)_{\mathbb{V}}dW(t)
            +\varepsilon\|\widehat{G}(u^{\varepsilon}(t),t)\|^2_{\mathbb{V}^{\otimes m}}dt\Big)\nonumber\\
      & &+\frac{p}{4}(\frac{p}{2}-1)\|u^{\varepsilon}(t)-u^{0}(t)\|_{\mathbb{V}}^{p-4}4\varepsilon
            \big(\widehat{G}(u^{\varepsilon}(t),t),u^{\varepsilon}(t)-u^{0}(t)\big)_{\mathbb{V}}
            \big(\widehat{G}(u^{\varepsilon}(t),t),u^{\varepsilon}(t)-u^{0}(t)\big)_{\mathbb{V}}'dt.
\end{eqnarray*}


Integrating from $0$ to $t$, we have
\begin{eqnarray*}
& &\|u^{\varepsilon}(t)-u^{0}(t)\|_{\mathbb{V}}^{p}+
      p\nu\int_{0}^{t}\|u^{\varepsilon}(s)-u^{0}(s)\|_{\mathbb{V}}^{p-2}\|u^{\varepsilon}(s)-u^{0}(s)\|^{2}ds\nonumber\\
&=&-p\int_{0}^{t}\|u^{\varepsilon}(s)-u^{0}(s)\|_{\mathbb{V}}^{p-2}
      \langle\widehat{B}(u^{\varepsilon}(s),u^{\varepsilon}(s))-\widehat{B}(u^{0}(s),u^{0}(s)),u^{\varepsilon}(s)-u^{0}(s)\rangle ds\nonumber\\
& &+p\int_{0}^{t}\|u^{\varepsilon}(s)-u^{0}(s)\|_{\mathbb{V}}^{p-2}
      \big(\widehat{F}(u^{\varepsilon}(s),s)-\widehat{F}(u^{0}(s),s),u^{\varepsilon}(s)-u^{0}(s)\big)_{\mathbb{V}}ds\nonumber\\
& &+\sqrt{\varepsilon}p\int_{0}^{t}\|u^{\varepsilon}(s)-u^{0}(s)\|_{\mathbb{V}}^{p-2}
      \big(\widehat{G}(u^{\varepsilon}(s),s),u^{\varepsilon}(s)-u^{0}(s)\big)_{\mathbb{V}}dW(s)\\
& &+\frac{p}{2}\varepsilon\int_{0}^{t}\|u^{\varepsilon}(s)-u^{0}(s)\|_{\mathbb{V}}^{p-2}
      \|\widehat{G}(u^{\varepsilon}(s),s)\|^2_{\mathbb{V}^{\otimes m}}ds\nonumber\\
& &+p(\frac{p}{2}-1)\varepsilon\int_{0}^{t}\|u^{\varepsilon}(s)-u^{0}(s)\|_{\mathbb{V}}^{p-4}
      \big(\widehat{G}(u^{\varepsilon}(s),s),u^{\varepsilon}(s)-u^{0}(s)\big)_{\mathbb{V}}
      \big(\widehat{G}(u^{\varepsilon}(s),s),u^{\varepsilon}(s)-u^{0}(s)\big)_{\mathbb{V}}'ds.\nonumber
\end{eqnarray*}


Taking sup over the interval $[0,T\wedge\tau_{J}]$ and taking expectation,
\begin{eqnarray}\label{CLT lemma 00}
& &E\Big[\sup_{t\in[0,T\wedge\tau_{J}]}\|u^{\varepsilon}(t)-u^{0}(t)\|_{\mathbb{V}}^{p}\Big]+
              \nu p\ E\Big[\int_{0}^{T\wedge\tau_{J}}\|u^{\varepsilon}(s)-u^{0}(s)\|_{\mathbb{V}}^{p-2}\|u^{\varepsilon}(s)-u^{0}(s)\|^{2}ds)\Big]\nonumber\\
&\leq&
pE\Big[\int_{0}^{T\wedge\tau_{J}}\|u^{\varepsilon}(s)-u^{0}(s)\|_{\mathbb{V}}^{p-2}\big|\big\langle\widehat{B}(u^{\varepsilon}(s),u^{\varepsilon}(s))-
              \widehat{B}(u^{0}(s),u^{0}(s)),u^{\varepsilon}(s)-u^{0}(s)\big\rangle\big|ds\Big]\nonumber\\
& &+pE\Big[\int_{0}^{T\wedge\tau_{J}}\|u^{\varepsilon}(s)-u^{0}(s)\|_{\mathbb{V}}^{p-2}
              \big|\big(\widehat{F}(u^{\varepsilon}(s),s)-\widehat{F}(u^{0}(s),s),u^{\varepsilon}(s)-u^{0}(s)\big)_{\mathbb{V}}\big|ds\Big]\nonumber\\
& &+\sqrt{\varepsilon}pE\Big[\sup_{0\leq t\leq T\wedge\tau_{J}}\Big|\int_{0}^{t}\|u^{\varepsilon}(s)-u^{0}(s)\|_{\mathbb{V}}^{p-2}
              \big(\widehat{G}(u^{\varepsilon}(t),t),u^{\varepsilon}(s)-u^{0}(s)\big)_{\mathbb{V}}dW(s)\Big|\Big]\nonumber\\
& &+\frac{p}{2}\varepsilon E\Big[\int_{0}^{T\wedge\tau_{J}}\|u^{\varepsilon}(s)-u^{0}(s)\|_{\mathbb{V}}^{p-2}
              \|\widehat{G}(u^{\varepsilon}(s),s)\|^2_{\mathbb{V}^{\otimes m}}ds\Big]\nonumber\\
& &+p(\frac{p}{2}-1)\varepsilon E\Big[\int_{0}^{T\wedge\tau_{J}}\|u^{\varepsilon}(s)-u^{0}(s)\|_{\mathbb{V}}^{p-4}\nonumber\\
& &\qquad\big(\widehat{G}(u^{\varepsilon}(s),s),u^{\varepsilon}(s)-u^{0}(s)\big)_{\mathbb{V}}
         \big(\widehat{G}(u^{\varepsilon}(s),s),u^{\varepsilon}(s)-u^{0}(s)\big)_{\mathbb{V}}'ds\Big]\nonumber\\
& &:=I_{1}(T)+I_{2}(T)+I_{3}(T)+I_{4}(T)+I_{5}(T).
\end{eqnarray}

Lemma \ref{Lem B} and Lemma \ref{Lem-B-01} imply
\begin{eqnarray}\label{CLT lemma 01}
I_{1}(T)
&\leq&
C_{p}E\Big[\int_{0}^{T\wedge\tau_{J}}\|u^{\varepsilon}(s)-u^{0}(s)\|_{\mathbb{V}}^{p-2}\big|\big\langle\widehat{B}\big(u^{\varepsilon}(s)-u^{0}(s)
              ,u^{\varepsilon}(s)\big),u^{\varepsilon}(s)-u^{0}(s)\big\rangle\big|ds\Big]\nonumber\\
&\leq&
C_{p}E\Big[\int_{0}^{T\wedge\tau_{J}}\|u^{\varepsilon}(s)-u^{0}(s)\|_{\mathbb{V}}^{p-2}\|u^{\varepsilon}(s)-u^{0}(s)\|_{\mathbb{V}}^{2}
              \|u^{0}(s)\|_{\mathbb{W}}ds\Big]\nonumber\\
&\leq&
C_{p}\int_0^{T}E\Big[\sup_{r\in[0,s\wedge\tau_{J}]}(\|u^{\varepsilon}(r)-u^{0}(r)\|_{\mathbb{V}}^{p})\Big]\|u^0(s)\|_{\mathbb{W}}ds.
\end{eqnarray}

\vskip 0.2cm

By (\ref{Jian F1}), we have
\begin{eqnarray}\label{CLT lemma 02}
I_2(T)
&\leq&
C_{p}E\Big[\int_{0}^{T\wedge\tau_{J}}\|u^{\varepsilon}(s)-u^{0}(s)\|_{\mathbb{V}}^{p-2}\|u^{\varepsilon}(s)-u^{0}(s)\|_{\mathbb{V}}^{2}ds\Big]\nonumber\\
&\leq&
C_{p}\int_0^{T\wedge\tau_{J}}E\Big[\sup_{r\in[0,s\wedge\tau_{J}]}\|u^{\varepsilon}(r)-u^{0}(r)\|_{\mathbb{V}}^{p}\Big]ds.
\end{eqnarray}


Applying (\ref{Jian G}) and the B-D-G and Young's inequalities to $I_3$, we have for any $\delta>0$
\begin{eqnarray}\label{CLT lemma 03}
I_3(T)
&\leq&
\sqrt{\varepsilon}C_{p}E\Big[\int_0^{T\wedge\tau_{J}}\|u^{\varepsilon}(s)-u^{0}(s)\|_{\mathbb{V}}^{2p-2}
      \|u^{\varepsilon}(s)\|_{\mathbb{V}}^{2}ds\Big]^{\frac{1}{2}}\nonumber\\
&\leq&
\sqrt{\varepsilon}C_{p}E\Big[\sup_{s\in[0,T\wedge\tau_{J}]}\|u^{\varepsilon}(s)-u^{0}(s)\|_{\mathbb{V}}^{\frac{p}{2}}
      \big(\int_0^{T\wedge\tau_{J}}\|u^{\varepsilon}(s)-u^{0}(s)\|_{\mathbb{V}}^{p-2}\|u^{\varepsilon}(s)\|_{\mathbb{V}}^{2}ds\big)^{\frac{1}{2}}\Big]\nonumber\\
&\leq&
\delta E\Big[\sup_{s\in[0,T\wedge\tau_{J}]}\|u^{\varepsilon}(s)-u^{0}(s)\|_{\mathbb{V}}^{p}\Big]
      +\varepsilon C_{p,\delta}E\Big[\int_{0}^{T\wedge\tau_{J}}\|u^{\varepsilon}(s)-u^{0}(s)\|_{\mathbb{V}}^{p-2}\|u^{\varepsilon}(s)\|_{\mathbb{V}}^{2}ds\Big]\nonumber\\
&\leq&
\delta E\Big[\sup_{s\in[0,T\wedge\tau_{J}]}\|u^{\varepsilon}(s)-u^{0}(s)\|_{\mathbb{V}}^{p}\Big]
      +C_{p,\delta}E\Big[\int_{0}^{T\wedge\tau_{J}}\|u^{\varepsilon}(s)-u^{0}(s)\|_{\mathbb{V}}^{p}ds\Big]\nonumber\\
& &\quad+\varepsilon^{\frac{p}{2}}C_{p,\delta}E\Big[\int_{0}^{T\wedge\tau_{J}}\|u^{\varepsilon}(s)\|_{\mathbb{V}}^{p}ds\Big]\nonumber\\
&\leq&
\delta E\Big[\sup_{s\in[0,T\wedge\tau_{J}]}\|u^{\varepsilon}(s)-u^{0}(s)\|_{\mathbb{V}}^{p}\Big]
      +C_{p,\delta}\int_0^{T}E\Big[\sup_{r\in[0,s\wedge\tau_{J}]}\|u^{\varepsilon}(r)-u^{0}(r)\|_{\mathbb{V}}^{p}\Big]ds\nonumber\\
& &\quad+\varepsilon^{\frac{p}{2}}C_{p,\delta,T}E\Big[\sup_{s\in[0,T]}\|u^{\varepsilon}(s)\|_{\mathbb{V}}^{p}\Big].
\end{eqnarray}


By (\ref{G-01}) and (\ref{Jian G}), we have
\begin{eqnarray}\label{CLT lemma 04}
I_4(T)
&\leq&
\varepsilon C_{p}E\Big[\int_{0}^{T\wedge\tau_{J}}\|u^{\varepsilon}(s)-u^{0}(s)\|_{\mathbb{V}}^{p-2}\|u^{\varepsilon}(s)\|_{\mathbb{V}}^{2}ds\Big]\nonumber\\
&\leq&
C_{p}\int_0^{T}E\Big[\sup_{r\in[0,s\wedge\tau_{J}]}\|u^{\varepsilon}(r)-u^{0}(r)\|_{\mathbb{V}}^{p}\Big]ds
      +\varepsilon^{\frac{p}{2}}C_{p,T}E\Big[\sup_{s\in[0,T]}\|u^{\varepsilon}(s)\|_{\mathbb{V}}^{p}\Big],
\end{eqnarray}
and
\begin{eqnarray}\label{CLT lemma 05}
I_5(T)
&\leq&
\varepsilon C_{p}E\Big[\int_{0}^{T\wedge\tau_{J}}\|u^{\varepsilon}(s)-u^{0}(s)\|_{\mathbb{V}}^{p-2}\|u^{\varepsilon}(s)\|_{\mathbb{V}}^{2}\Big]\nonumber\\
&\leq&
C_{p}\int_0^{T}E\Big[\sup_{r\in[0,s\wedge\tau_{J}]}\|u^{\varepsilon}(r)-u^{0}(r)\|_{\mathbb{V}}^{p}\Big]ds
      +\varepsilon^{\frac{p}{2}}C_{p,T}E\Big[\sup_{s\in[0,T]}\|u^{\varepsilon}(s)\|_{\mathbb{V}}^{p}\Big].
\end{eqnarray}


Combining (\ref{CLT lemma 00})--(\ref{CLT lemma 05}), we get
\begin{eqnarray}
& &(1-\delta)E\Big[\sup_{s\in[0,T\wedge\tau_{J}]}\|u^{\varepsilon}(s)-u^{0}(s)\|_{\mathbb{V}}^{p}\Big]\nonumber\\
& &\quad+\nu pE\Big[\int_{0}^{T\wedge\tau_{J}}\|u^{\varepsilon}(s)-u^{0}(s)\|_{\mathbb{V}}^{p-2}\|u^{\varepsilon}(s)-u^{0}(s)\|^{2}ds\Big]\nonumber\\
&\leq&
C_{p}\int_0^{T}E\Big[\sup_{r\in[0,s\wedge\tau_{J}]}(\|u^{\varepsilon}(r)-u^{0}(r)\|_{\mathbb{V}}^{p})\Big]\big(\|u^0(s)\|_{\mathbb{W}}+1\big)ds\nonumber\\
& &+\varepsilon^{\frac{p}{2}}C_{p,\delta,T}E\Big[\sup_{s\in[0,T]}\|u^{\varepsilon}(s)\|_{\mathbb{V}}^{p}\Big].
\end{eqnarray}
Choosing $\delta=\frac{1}{2}$ and applying Gronwall's Inequality, we obtain
\begin{eqnarray}
E\Big[\sup_{s\in[0,T\wedge\tau_{J}]}\|u^\varepsilon(s)-u^{0}(s)\|^p_\mathbb{V}\Big]\leq\varepsilon^{\frac{p}{2}}C_p.
\end{eqnarray}
Finally, let $J\rightarrow\infty$ to obtain (\ref{CLT prop}).
\end{proof}


\vskip 0.3cm
Consider the following SPDE:
\begin{eqnarray}\label{zheng}
& &dV^{0}(t)+\nu\widehat{A}V^{0}(t)dt\nonumber\\
&=&-\widehat{B}(V^{0}(t),u^{0}(t))-\widehat{B}(u^0(t),V^0(t))+\widehat{F}'(u^0(t),t)V^0(t)dt+\widehat{G}(u^0(t),t)dW(t).
\end{eqnarray}
with initial value $V^0(0)=0$. Using Galerkin approximations and Lemma \ref{Regularity} as in \cite{RS-12}, we can prove that there exists a unique solution of (\ref{zheng}), and we have the following estimate:
\begin{eqnarray}
E\Big[\sup_{s\in[0,T]}\|V^0(s)\|^p_\mathbb{W}\Big]\leq C_p\nonumber,\ \ \ p\geq 2.
\end{eqnarray}
Actually, the details of a prior estimates of Galerkin approximations are also similar to the proof of Lemma \ref{MDP lemma 01} below.

\vskip 0.2cm
Our first main result is the following central limit theorem.
\begin{thm}
Set $V^{\varepsilon}(t):=\frac{u^\varepsilon-u^0}{\sqrt{\varepsilon}}$.\\
Then we have
\begin{eqnarray}\label{CLT limit}
\lim_{\varepsilon\rightarrow0}E\Big[\sup_{s\in[0,T]}\|V^{\varepsilon}(s)-V^0(s)\|_\mathbb{V}^p\Big]=0.
\end{eqnarray}
\end{thm}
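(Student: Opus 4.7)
The plan is to derive the SPDE satisfied by $Z^\varepsilon := V^\varepsilon - V^0$ and then apply It\^o's formula to $\|Z^\varepsilon(t)\|_\mathbb{V}^p$, following the same scheme as the proof of Proposition~\ref{CLT prp}. By bilinearity of $\widehat B$,
$$\frac{1}{\sqrt{\varepsilon}}\big[\widehat B(u^\varepsilon,u^\varepsilon)-\widehat B(u^0,u^0)\big]=\widehat B(V^\varepsilon,u^0)+\widehat B(u^0,V^\varepsilon)+\sqrt{\varepsilon}\,\widehat B(V^\varepsilon,V^\varepsilon),$$
and the mean value theorem yields, with $R^\varepsilon(t):=\int_0^1 \widehat F'(u^0+\theta(u^\varepsilon-u^0),t)\,d\theta$,
$$\frac{1}{\sqrt{\varepsilon}}\big[\widehat F(u^\varepsilon,t)-\widehat F(u^0,t)\big]=R^\varepsilon(t)V^\varepsilon(t).$$
Subtracting (\ref{zheng}) from the equation satisfied by $V^\varepsilon$ then gives, with $Z^\varepsilon(0)=0$,
\begin{eqnarray*}
dZ^\varepsilon + \nu\widehat A Z^\varepsilon\,dt + \big[\widehat B(Z^\varepsilon,u^0)+\widehat B(u^0,Z^\varepsilon)\big]dt
&=& -\sqrt{\varepsilon}\,\widehat B(V^\varepsilon,V^\varepsilon)dt \\
&&+\big[(R^\varepsilon-\widehat F'(u^0,\cdot))V^\varepsilon + \widehat F'(u^0,\cdot)Z^\varepsilon\big]dt \\
&&+\big[\widehat G(u^\varepsilon,\cdot)-\widehat G(u^0,\cdot)\big]dW.
\end{eqnarray*}

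Next I introduce the stopping time $\tau_J^\varepsilon:=\inf\{t\ge 0:\|Z^\varepsilon(t)\|_\mathbb{V}\ge J\}$ and apply It\^o's formula to $\|Z^\varepsilon\|_\mathbb{V}^p$. For the trilinear pairing, $\langle\widehat B(u^0,Z^\varepsilon),Z^\varepsilon\rangle=0$ by (\ref{Eq B-03}), and by (\ref{Eq B-04}) together with (\ref{Eq B-02}),
$$|\langle\widehat B(Z^\varepsilon,u^0),Z^\varepsilon\rangle|=|\langle\widehat B(Z^\varepsilon,Z^\varepsilon),u^0\rangle|\le C\|Z^\varepsilon\|_\mathbb{V}^2\|u^0\|_\mathbb{W}.$$
Using (\ref{Jian F2}), the first $\widehat F$-piece is bounded in $\mathbb V$ by $C\sqrt{\varepsilon}\|V^\varepsilon\|_\mathbb{V}^2$, while the second is bounded by $C\|Z^\varepsilon\|_\mathbb{V}$. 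The stochastic integral is treated exactly as $I_3$ in Proposition~\ref{CLT prp}: B-D-G, (\ref{Jian G}) and Young's inequality produce a contribution of the form
$$\delta\,E\!\!\sup_{s\le T\wedge\tau_J^\varepsilon}\!\!\|Z^\varepsilon\|_\mathbb{V}^p + C_{p,\delta}\!\int_0^T\!\!E\!\!\sup_{r\le s\wedge\tau_J^\varepsilon}\!\!\|Z^\varepsilon\|_\mathbb{V}^p\,ds + \varepsilon^{p/2}C_{p,T}E\!\!\sup_{s\le T}\!\!\|V^\varepsilon\|_\mathbb{V}^p,$$
where the factor $\sqrt{\varepsilon}$ that produces $\varepsilon^{p/2}$ comes from $\|\widehat G(u^\varepsilon)-\widehat G(u^0)\|\le C\sqrt{\varepsilon}\|V^\varepsilon\|_\mathbb{V}$.

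The delicate step is controlling the residual $\sqrt{\varepsilon}\,\widehat B(V^\varepsilon,V^\varepsilon)$: since this element lives only in $\mathbb{W}^*$, I use $\langle\widehat B(V^\varepsilon,V^\varepsilon),V^\varepsilon\rangle=0$ to rewrite $\sqrt{\varepsilon}\langle\widehat B(V^\varepsilon,V^\varepsilon),Z^\varepsilon\rangle=-\sqrt{\varepsilon}\langle\widehat B(V^\varepsilon,V^\varepsilon),V^0\rangle$, and apply (\ref{Eq B-02}) to bound this by $C\sqrt{\varepsilon}\|V^\varepsilon\|_\mathbb{V}^2\|V^0\|_\mathbb{W}$. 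After multiplication by $\|Z^\varepsilon\|_\mathbb{V}^{p-2}$, Young's inequality with exponents $p/(p-2)$ and $p/2$ together with H\"older yield a residual of order $\varepsilon^{p/4}$ multiplied by $(E\sup\|V^\varepsilon\|_\mathbb{V}^{2p})^{1/2}(E\sup\|V^0\|_\mathbb{W}^p)^{1/2}$, both finite by Proposition~\ref{CLT prp} and the a priori $\mathbb W$-bound on $V^0$. Collecting all estimates, choosing $\delta=1/2$ and applying Gronwall's inequality produces
$$E\Big[\sup_{s\in[0,T\wedge\tau_J^\varepsilon]}\|Z^\varepsilon(s)\|_\mathbb{V}^p\Big]\le C_{p,T}\,\varepsilon^{p/4},$$
and letting $J\to\infty$ (permissible because $E\sup\|Z^\varepsilon\|_\mathbb{V}^p$ stays finite thanks to Proposition~\ref{CLT prp}) concludes (\ref{CLT limit}). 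The chief obstacle is precisely this treatment of the $\sqrt{\varepsilon}\,\widehat B(V^\varepsilon,V^\varepsilon)$ remainder: only a $\mathbb V$-moment bound on $V^\varepsilon$ is available, so we must offload one copy of $V^\varepsilon$ onto $V^0\in\mathbb W$ via the antisymmetry of $\widehat B$ before the estimate can be closed.
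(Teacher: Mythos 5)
Your proposal is correct and follows essentially the same route as the paper: derive the equation for $V^{\varepsilon}-V^{0}$, apply It\^o's formula to the $p$-th power of the $\mathbb{V}$-norm with a stopping time, and handle the quadratic remainder $\sqrt{\varepsilon}\,\widehat{B}(V^{\varepsilon},V^{\varepsilon})$ exactly as the paper does, by using the antisymmetry (\ref{Eq B-03})--(\ref{Eq B-04}) to pair it against $V^{0}\in\mathbb{W}$ and invoking (\ref{Eq B-02}) together with Proposition \ref{CLT prp} and the a priori bound on $V^{0}$. The only cosmetic differences are that you expand the bilinear term at the level of the equation rather than inside the pairing estimate, and your bookkeeping yields a rate $\varepsilon^{p/4}$ versus the paper's $\varepsilon^{p/2}+\sqrt{\varepsilon}$, both of which suffice for (\ref{CLT limit}).
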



\begin{proof}
Noticing that $V^{\varepsilon}(0)=0$,
\begin{eqnarray}
&  &dV^{\varepsilon}(t)+\nu\widehat{A}V^{\varepsilon}(t)dt+
\widehat{B}(V^{\varepsilon}(t),u^{\varepsilon}(t))dt+\widehat{B}(u^{0}(t),V^{\varepsilon}(t))dt\nonumber\\
&=&
\frac{1}{\sqrt{\varepsilon}}\big(\widehat{F}(u^{\varepsilon}(t),t)-\widehat{F}(u^{0}(t),t)\big)dt+\widehat{G}(u^{\varepsilon}(t),t)dW(t),\nonumber
\end{eqnarray}
We have
\begin{eqnarray*}
& &d(V^{\varepsilon}(t)-V^0(t))+\nu\widehat{A}(V^{\varepsilon}(t)-V^0(t))dt\nonumber\\
& &+(\widehat{B}(V^{\varepsilon}(t),u^{\varepsilon}(t))-\widehat{B}(V^{0}(t),u^{0}(t)))dt
      +(\widehat{B}(u^{0}(t),V^{\varepsilon}(t))-\widehat{B}(u^0(t),V^0(t)))dt\nonumber\\
&=&
\frac{1}{\sqrt{\varepsilon}}\big(\widehat{F}(u^{\varepsilon}(t),t)-\widehat{F}(u^{0}(t),t)\big)dt-\widehat{F}'(u^0(t),t)V^0(t)dt\nonumber\\
& &\quad+\big(\widehat{G}(u^{\varepsilon}(t),t)-\widehat{G}(u^{0}(t),t)\big)dW(t).
\end{eqnarray*}

By Proposition \ref{CLT prp}, we have
\begin{eqnarray}
E\Big[\sup_{s\in[0,T]}\|V^{\varepsilon}(s)\|^p_\mathbb{V}\Big]\leq C_{p}.
\end{eqnarray}
Now applying ${\rm It\hat{o}}$'s formula to $\|V^{\varepsilon}(t)-V^{0}(t)\|_{\mathbb{V}}^p$ for $p\geq2$, we have
\begin{eqnarray*}
& &d\|V^{\varepsilon}(t)-V^{0}(t)\|_{\mathbb{V}}^{p}
=d(\|V^{\varepsilon}(t)-V^{0}(t)\|_{\mathbb{V}}^{2})^{\frac{p}{2}}\nonumber\\
&=&
\frac{p}{2}\|V^{\varepsilon}(t)-V^{0}(t)\|_{\mathbb{V}}^{p-2}
    \Big(
         -2\nu\|V^{\varepsilon}(t)-V^{0}(t)\|^{2}\nonumber\\
         & &\quad-2\langle\widehat{B}(V^{\varepsilon}(t),u^{\varepsilon}(t))-\widehat{B}(V^{0}(t),u^{0}(t)),V^{\varepsilon}(t)-V^0(t)\rangle dt\nonumber\\
         & &\quad+2\big(\frac{1}{\sqrt{\varepsilon}}\big(\widehat{F}(u^{\varepsilon}(t),t)-\widehat{F}(u^0(t),t)\big)-\widehat{F}'(u^0(t),t)V^0(t),
            V^{\varepsilon}(t)-V^0(t)\big)_{\mathbb{V}}dt\nonumber\\
         & &\quad+2\big(\widehat{G}(u^\varepsilon(t),t)-\widehat{G}(u^0(t),t),V^{\varepsilon}(t)-V^0(t)\big)_{\mathbb{V}}dW(t)
         +\|\widehat{G}(u^\varepsilon(t),t)-\widehat{G}(u^0(t),t)\|_{\mathbb{V}^{\otimes m}}^2dt
    \Big)\nonumber\\
& &+p(\frac{p}{2}-1)\|V^{\varepsilon}(t)-V^{0}(t)\|_{\mathbb{V}}^{p-4}\big(\widehat{G}(u^\varepsilon(t),t)-\widehat{G}(u^0(t),t),
    V^{\varepsilon}(t)-V^0(t)\big)_{\mathbb{V}}\nonumber\\
& &\qquad\big(\widehat{G}(u^\varepsilon(t),t)-\widehat{G}(u^0(t),t),V^{\varepsilon}(t)-V^0(t)\big)'_{\mathbb{V}}dt.
\end{eqnarray*}

Integrating from $0$ to $t$,
\begin{eqnarray*}
& &\|V^{\varepsilon}(t)-V^{0}(t)\|_{\mathbb{V}}^{p}+
\nu p\int_0^{t}\|V^{\varepsilon}(s)-V^{0}(s)\|_{\mathbb{V}}^{p-2}\|V^{\varepsilon}(s)-V^{0}(s)\|^{2}ds\nonumber\\
&=&-p\int_0^{t}\|V^{\varepsilon}(s)-V^{0}(s)\|_{\mathbb{V}}^{p-2}
      \langle\widehat{B}(V^{\varepsilon}(s),u^{\varepsilon}(s))-\widehat{B}(V^{0}(s),u^{0}(s)),V^{\varepsilon}(s)-V^0(s)\rangle ds\nonumber\\
& &+p\int_0^{t}\|V^{\varepsilon}(s)-V^{0}(s)\|_{\mathbb{V}}^{p-2}\nonumber\\
      & &\qquad\big(\frac{1}{\sqrt{\varepsilon}}(\widehat{F}(u^{\varepsilon}(s),s)-F(u^0(s),s))-\widehat{F}'(u^0(s),s)V^0(s),
      V^{\varepsilon}(s)-V^0(s)\big)_{\mathbb{V}}ds\nonumber\\
& &+p\int_0^{t}\|V^{\varepsilon}(s)-V^{0}(s)\|_{\mathbb{V}}^{p-2}
      \big(\widehat{G}(u^\varepsilon(s),s)-\widehat{G}(u^0(s),s),V^{\varepsilon}(s)-V^0(s)\big)_{\mathbb{V}}dW(s)\nonumber\\
& &+\frac{p}{2}\int_0^{t}\|V^{\varepsilon}(s)-V^{0}(s)\|_{\mathbb{V}}^{p-2}
      \|\widehat{G}(u^\varepsilon(s),s)-\widehat{G}(u^0(s),s)\|_{\mathbb{V}^{\otimes m}}^2ds\nonumber\\
& &+p(\frac{p}{2}-1)\int_0^{t}\|V^{\varepsilon}(s)-V^{0}(s)\|_{\mathbb{V}}^{p-4}\big(\widehat{G}(u^\varepsilon(s),s)-\widehat{G}(u^0(s),s),
V^{\varepsilon}(s)-V^0(s)\big)_{\mathbb{V}}\nonumber\\
& &\qquad\big(\widehat{G}(u^\varepsilon(s),s)-\widehat{G}(u^0(s),s),V^{\varepsilon}(s)-V^0(s)\big)_{\mathbb{V}}'ds.
\end{eqnarray*}
Define $\tau_{J}=\inf\{t\geq0;\|V^{\varepsilon}(t)-V^0(t)\|^p_\mathbb{V}\geq J\}$. Taking sup over the interval $[0,T\wedge\tau_{J}]$ and taking expectation,
\begin{eqnarray}\label{CLT 00}
& &E\Big[\sup_{s\in[0,T\wedge\tau_{J}]}\|V^\varepsilon(s)-V^{0}(s)\|^p_\mathbb{V}\Big]
+\nu pE\Big[\int_0^{T\wedge\tau_{J}}\|V^{\varepsilon}(s)-V^{0}(s)\|_{\mathbb{V}}^{p-2}\|V^{\varepsilon}(s)-V^{0}(s)\|^{2}ds\Big]\nonumber\\
&\leq&
pE\Big[\int_0^{T\wedge\tau_{J}}\|V^{\varepsilon}(s)-V^{0}(s)\|_{\mathbb{V}}^{p-2}
      \big|\langle\widehat{B}(V^{\varepsilon}(s),u^{\varepsilon}(s))-\widehat{B}(V^{0}(s),u^{0}(s)),V^{\varepsilon}(s)-V^0(s)\rangle\big|ds\Big]\nonumber\\
& &+pE\Big[\int_0^{T\wedge\tau_{J}}\|V^{\varepsilon}(s)-V^{0}(s)\|_{\mathbb{V}}^{p-2}\nonumber\\
& &\qquad\big|\big(\frac{1}{\sqrt{\varepsilon}}\big(\widehat{F}(u^{\varepsilon}(s),s)-\widehat{F}(u^0(s),s)\big)-\widehat{F}'(u^0(s),s)V^0(s),
       V^{\varepsilon}(s)-V^0(s)\big)_{\mathbb{V}}\big|ds\Big]\nonumber\\
& &+pE\Big[\sup_{t\in[0,T\wedge\tau_{J}]}\Big|\int_0^{t}\|V^{\varepsilon}(s)-V^{0}(s)\|_{\mathbb{V}}^{p-2}
      2\big(\widehat{G}(u^\varepsilon(s),s)-\widehat{G}(u^0(s),s),V^{\varepsilon}(s)-V^0(s)\big)_{\mathbb{V}}dW(s)\Big|\Big]\nonumber\\
& &+\frac{p}{2}E\Big[\int_0^{T\wedge\tau_{J}}\|V^{\varepsilon}(s)-V^{0}(s)\|_{\mathbb{V}}^{p-2}
      \|\widehat{G}(u^\varepsilon(s),s)-\widehat{G}(u^0(s),s)\|_{\mathbb{V}^{\otimes m}}^2ds\Big]\nonumber\\
& &+p(\frac{p}{2}-1)E\Big[\int_0^{T\wedge\tau_{J}}\|V^{\varepsilon}(s)-V^{0}(s)\|_{\mathbb{V}}^{p-4}
      \big(\widehat{G}(u^\varepsilon(s),s)-\widehat{G}(u^0(s),s),V^{\varepsilon}(s)-V^0(s)\big)_{\mathbb{V}}\nonumber\\
& &\qquad\big(\widehat{G}(u^\varepsilon(s),s)-\widehat{G}(u^0(s),s),V^{\varepsilon}(s)-V^0(s)\big)_{\mathbb{V}}'ds\Big]\nonumber\\
&:=&
J_{1}(T)+J_{2}(T)+J_{3}(T)+J_{4}(T)+J_{5}(T).
\end{eqnarray}

By Lemma \ref{Lem-B-01},
\begin{eqnarray*}
& &\big|\langle\widehat{B}\big(V^{\varepsilon}(s),u^{\varepsilon}(s)\big)
-\widehat{B}\big(V^{0}(s),u^{0}(s)\big),V^{\varepsilon}(s)-V^0(s)\rangle\big|\nonumber\\
&=&\big|-\langle\widehat{B}\big(V^{\varepsilon}(s)-V^0(s),V^{\varepsilon}(s)-V^0(s)\big),u^0(s)\rangle
+\sqrt{\varepsilon}\langle\widehat{B}\big(V^{\varepsilon}(s),V^{\varepsilon}(s)\big),V^0(s)\rangle\big|\nonumber\\
&\leq&
C\|V^{\varepsilon}(s)-V^{0}(s)\|_{\mathbb{V}}^{2}\|u^0(s)\|_{\mathbb{W}}
+\sqrt{\varepsilon}C\|V^{\varepsilon}(s)\|_{\mathbb{V}}^{2}\|V^0(s)\|_{\mathbb{W}}.
\end{eqnarray*}
Substituting the above inequality into $J_1$, we get for $\delta>0$
\begin{eqnarray}\label{CLT 01}
J_{1}(T)
&\leq&
C_{p}E\Big[\int_0^{T\wedge\tau_{J}}\|V^{\varepsilon}(s)-V^{0}(s)\|_{\mathbb{V}}^{p}\|u^0(s)\|_{\mathbb{W}}\nonumber\\
& &\quad+\sqrt{\varepsilon}C\|V^{\varepsilon}(s)-V^{0}(s)\|_{\mathbb{V}}^{p-2}\|V^{\varepsilon}(s)\|_{\mathbb{V}}^{2}\|V^0(s)\|_{\mathbb{W}}ds\Big]\nonumber\\
&\leq&
C_{p}\int_0^{T}E\Big[\sup_{r\in[0,s\wedge\tau_{J}]}\|V^{\varepsilon}(r)-V^{0}(r)\|_{\mathbb{V}}^{p}\Big]\|u^0(s)\|_{\mathbb{W}}ds\nonumber\\
& &\quad+\sqrt{\varepsilon}C_{p}E\Big[\int_0^{T\wedge\tau_{J}}
      \big(
          \delta\|V^{\varepsilon}(s)-V^{0}(s)\|_{\mathbb{V}}^{p}
          +C_{\delta}(\|V^{\varepsilon}(s)\|_{\mathbb{V}}^{2}\|V^0(s)\|_{\mathbb{W}})^{\frac{p}{2}}
      \big)ds\Big]\nonumber\\
&\leq&
C_{p,\delta}\int_0^{T}E\Big[\sup_{r\in[0,s\wedge\tau_{J}]}\|V^{\varepsilon}(r)-V^{0}(r)\|_{\mathbb{V}}^{p}\Big]\big(\|u^0(s)\|_{\mathbb{W}}+1\big)ds\nonumber\\
& &\quad+\sqrt{\varepsilon}C_{p,\delta}
      \Big(
            E\Big[\int_0^{T}\|V^{\varepsilon}(s)\|_{\mathbb{V}}^{2p}ds\Big]
            +E\Big[\int_0^{T}\|V^0(s)\|_{\mathbb{W}}^pds\Big]
      \Big)\nonumber\\
&\leq&
C_{p,\delta}\int_0^{T}E\Big[\sup_{r\in[0,s\wedge\tau_{J}]}\|V^{\varepsilon}(r)-V^{0}(r)\|_{\mathbb{V}}^{p}\Big]\big(\|u^0(s)\|_{\mathbb{W}}+1\big)ds\nonumber\\
& &\quad+\sqrt{\varepsilon}C_{p,\delta,T}
      \Big(E\Big[\sup_{s\in[0,T]}\|V^{\varepsilon}(s)\|_{\mathbb{V}}^{2p}\Big]
            +E\Big[\sup_{s\in[0,T]}\|V^0(s)\|_{\mathbb{W}}^p\Big]\Big).
\end{eqnarray}

By (\ref{F-02}) and (\ref{Jian F2}), we have
\begin{eqnarray*}
& &\big|\big(\frac{1}{\sqrt{\varepsilon}}\big(\widehat{F}(u^{\varepsilon}(s),s)-\widehat{F}(u^0(s),s)\big)-\widehat{F}'(u^0(s),s)V^0(s),
V^{\varepsilon}(s)-V^0(s)\big)_{\mathbb{V}}\big|\nonumber\\
&\leq&
    \Big|\Big(\frac{1}{\sqrt{\varepsilon}}\big(F(u^{\varepsilon}(s),s)-F(u^0(s),s)\big)-F'(u^0(s),s)\frac{u^{\varepsilon}(s)-u^0(s)}{\sqrt{\varepsilon}}
          ,V^{\varepsilon}(s)-V^0(s)\Big)_{\mathbb{V}}\Big|\nonumber\\
& &+|\big(F'(u^0(s),s)\frac{u^{\varepsilon}(s)-u^0(s)}{\sqrt{\varepsilon}}-F'(u^0(s),s)V^0(s),V^{\varepsilon}(s)-V^0(s)\big)_{\mathbb{V}}|\nonumber\\
&\leq&
C\|\theta(s)\big(u^{\varepsilon}(s)-u^{0}(s)\big)\|_{\mathbb{V}}\|V^{\varepsilon}(s)-V^0(s)\|_\mathbb{V}\|V^{\varepsilon}(s)\|_\mathbb{V}
      +C\|V^{\varepsilon}(s)-V^0(s)\|_\mathbb{V}^2\nonumber\\
&\leq&
\sqrt{\varepsilon}C\|V^{\varepsilon}(s)\|^2_\mathbb{V}\|V^{\varepsilon}(s)-V^0(s)\|_\mathbb{V}
      +C\|V^{\varepsilon}(s)-V^0(s)\|_\mathbb{V}^2.
\end{eqnarray*}
$\theta(s)\in(0,1)$ comes from the mean value theorem.\\
Thus, we have
\begin{eqnarray}\label{CLT 02}
& &J_{2}(T)
\leq
C_{p}E\Big[\int_0^{T\wedge\tau_{J}}\|V^{\varepsilon}(s)-V^{0}(s)\|_{\mathbb{V}}^{p-2}\nonumber\\
& &\qquad\quad\quad\quad\quad\quad\quad\big(\sqrt{\varepsilon}\|V^{\varepsilon}(s)\|^2_\mathbb{V}\|V^{\varepsilon}(s)-V^0(s)\|_\mathbb{V}
      +\|V^{\varepsilon}(s)-V^0(s)\|_\mathbb{V}^2\big)ds\Big]\nonumber\\
&\leq&
C_{p}E\Big[\int_0^{T\wedge\tau_{J}}\|V^{\varepsilon}(s)-V^{0}(s)\|_{\mathbb{V}}^{p}ds\Big]
+\sqrt{\varepsilon}C_{p}E\Big[\int_0^{T\wedge\tau_{J}}\|V^{\varepsilon}(s)-V^{0}(s)\|_{\mathbb{V}}^{p-1}\|V^{\varepsilon}(s)\|^2_\mathbb{V}ds\Big]\nonumber\\
&\leq&
C_{p}\int_0^{T}E\Big[\sup_{s\in[0,t\wedge\tau_{J}]}\|V^{\varepsilon}(s)-V^{0}(s)\|_{\mathbb{V}}^{p}\Big]ds
      +\sqrt{\varepsilon}C_{p,T}E\Big[\sup_{s\in[0,T]}\|V^{\varepsilon}(s)\|_{\mathbb{V}}^{2p}\Big].
\end{eqnarray}

By the B-D-G inequality and (\ref{Jian G}),
\begin{eqnarray}\label{CLT 03}
& &J_{3}(T)
\leq
C_{p}E\Big[\int_0^{T\wedge\tau_{J}}\|V^{\varepsilon}(s)-V^{0}(s)\|_{\mathbb{V}}^{2p-2}\|u^{\varepsilon}(s)-u^{0}(s)\|_{\mathbb{V}}^{2}ds\Big]^{\frac{1}{2}}\nonumber\\
&\leq&
C_{p}E\Big[\sup_{s\in[0,T\wedge\tau_{J}]}\|V^{\varepsilon}(s)-V^{0}(s)\|_{\mathbb{V}}^{\frac{p}{2}}\big(\int_0^{T\wedge\tau_{J}}
       \|V^{\varepsilon}(s)-V^{0}(s)\|_{\mathbb{V}}^{p-2}\|u^{\varepsilon}(s)-u^{0}(s)\|_{\mathbb{V}}^{2}ds\big)^{\frac{1}{2}}\Big]\nonumber\\
&\leq&
\delta E\Big[\sup_{s\in[0,T\wedge\tau_{J}]}\|V^{\varepsilon}(s)-V^{0}(s)\|_{\mathbb{V}}^{p}\Big]\nonumber\\
& &\quad+C_{p,\delta}E\Big[\int_0^{T\wedge\tau_{J}}\|V^{\varepsilon}(s)-V^{0}(s)\|_{\mathbb{V}}^{p-2}
       \|u^{\varepsilon}(s)-u^{0}(s)\|_{\mathbb{V}}^{2}ds\Big]\nonumber\\
&\leq&
\delta E\Big[\sup_{s\in[0,T\wedge\tau_{J}]}\|V^{\varepsilon}(s)-V^{0}(s)\|_{\mathbb{V}}^{p}\Big]
      +C_{p,\delta}E\Big[\int_0^{T\wedge\tau_{J}}\|V^{\varepsilon}(s)-V^{0}(s)\|_{\mathbb{V}}^{p}ds\Big]\nonumber\\
& &\quad+\varepsilon^{\frac{p}{2}}C_{p,\delta}E\Big[\int_0^{T\wedge\tau_{J}}\|V^{\varepsilon}(s)\|_{\mathbb{V}}^{p}ds\Big]\nonumber\\
&\leq&
\delta E\Big[\sup_{s\in[0,T\wedge\tau_{J}]}\|V^{\varepsilon}(s)-V^{0}(s)\|_{\mathbb{V}}^{p}\Big]
      +C_{p,\delta}\int_0^{T}E\Big[\sup_{r\in[0,s\wedge\tau_{J}]}\|V^{\varepsilon}(r)-V^{0}(r)\|_{\mathbb{V}}^{p}\Big]ds\nonumber\\
& &\quad+\varepsilon^{\frac{p}{2}}C_{p,\delta,T}E\Big[\sup_{s\in[0,T]}\|V^{\varepsilon}(s)\|_{\mathbb{V}}^{p}\Big].
\end{eqnarray}

Using (\ref{Jian G}) again, we have
\begin{eqnarray}\label{CLT 04}
& &J_{4}(T)
\leq
C_{p}E\Big[\int_0^{T\wedge\tau_{J}}\|V^{\varepsilon}(s)-V^{0}(s)\|_{\mathbb{V}}^{p-2}
      \|u^\varepsilon(s)-u^0(s)\|_{\mathbb{V}}^2ds\Big]\nonumber\\
&\leq&
C_{p}E\Big[\int_0^{T\wedge\tau_{J}}\|V^{\varepsilon}(s)-V^{0}(s)\|_{\mathbb{V}}^{p}ds\Big]
      +C_{p}E\Big[\int_0^{T\wedge\tau_{J}}\|u^{\varepsilon}(s)-u^{0}(s)\|_{\mathbb{V}}^{p}ds\Big]\nonumber\\
&\leq&
C_{p}\int_0^{T}E\Big[\sup_{r\in[0,s\wedge\tau_{J}]}\|V^{\varepsilon}(r)-V^{0}(r)\|_{\mathbb{V}}^{p}\Big]ds
      +\varepsilon^{\frac{p}{2}}C_{p,T}E\Big[\sup_{s\in[0,T]}\|V^{\varepsilon}(s)\|_{\mathbb{V}}^{p}\Big].
\end{eqnarray}
and
\begin{eqnarray}\label{CLT 05}
& &J_{5}(T)
\leq
C_{p}E\Big[\int_0^{T\wedge\tau_{J}}\|V^{\varepsilon}(s)-V^{0}(s)\|_{\mathbb{V}}^{p-2}\|u^\varepsilon(s)-u^0(s)\|_{\mathbb{V}}^2ds\Big]\nonumber\\
&\leq&
C_{p}\int_0^{T}E\Big[\sup_{r\in[0,s\wedge\tau_{J}]}\|V^{\varepsilon}(r)-V^{0}(r)\|_{\mathbb{V}}^{p}\Big]ds
      +\varepsilon^{\frac{p}{2}}C_{p,T}E\Big[\sup_{s\in[0,T]}\|V^{\varepsilon}(s)\|_{\mathbb{V}}^{p}\Big].
\end{eqnarray}
Combining (\ref{CLT 00})--(\ref{CLT 05}), we get
\begin{eqnarray}
& &(1-\delta)E\Big[\sup_{s\in[0,T\wedge\tau_{J}]}\|V^\epsilon(s)-V^{0}(s)\|^p_\mathbb{V}\Big]\nonumber\\
& &\quad+\nu pE\Big[\int_0^{T\wedge\tau_{J}}\|V^{\varepsilon}(s)-V^{0}(s)\|_{\mathbb{V}}^{p-2}\|V^{\varepsilon}(s)-V^{0}(s)\|^{2}ds\Big]\nonumber\\
&\leq&
C_{p,\delta}\int_0^{T}E\Big[\sup_{r\in[0,s\wedge\tau_{J}]}\|V^{\varepsilon}(r)-V^{0}(r)\|_{\mathbb{V}}^{p}\Big]\big(\|u^0(s)\|_{\mathbb{W}}+1\big)ds\nonumber\\
& &+\sqrt{\varepsilon}C_{p,\delta,T}
      \Big(E\Big[\sup_{s\in[0,T]}\|V^{\varepsilon}(s)\|_{\mathbb{V}}^{2p}\Big]
            +E\Big[\sup_{s\in[0,T]}\|V^0(s)\|_{\mathbb{W}}^p\Big]\Big)\nonumber\\
& &+\varepsilon^{\frac{p}{2}}C_{p}E\Big[\sup_{s\in[0,T\wedge\tau_{J}]}\|V^{\varepsilon}(s)\|_{\mathbb{V}}^{p}\Big].
\end{eqnarray}
Choosing $\delta=\frac{1}{2}$ and applying Gronwall's inequality, we obtain
\begin{eqnarray}
E\Big[\sup_{s\in[0,T\wedge\tau_{J}]}\|V^\epsilon(s)-V^{0}(s)\|^p_\mathbb{V}\Big]\leq(\varepsilon^{\frac{p}{2}}+\sqrt{\varepsilon})C_p.
\end{eqnarray}
Let $J\rightarrow\infty$ and $\varepsilon\rightarrow0$ to obtain (\ref{CLT limit}).
\end{proof}

\section{Moderate deviation principle}
In this section, we will prove that $\frac{1}{\sqrt{\varepsilon}\lambda(\varepsilon)}(u^{\varepsilon}-u^{0})$ satisfies an LDP on $C([0,T];\mathbb{V})$ with $\lambda(\varepsilon)$ satisfying (\ref{condition}). This special type of LDP is usually called the moderate deviation of $u^{\varepsilon}$.
\subsection{Weak convergence method}
We will recall the general criteria for a large deviation principle obtained in \cite{Budhiraja-Dupuis}.
Let $\mathcal{E}$ be a Polish space with the Borel $\sigma$-field $\mathcal{B}(\mathcal{E})$.
    \begin{dfn}\label{Dfn-Rate function}
       \emph{\textbf{(Rate function)}} A function $I: \mathcal{E}\rightarrow[0,\infty]$ is called a rate function on
       $\mathcal{E}$,
       if for each $M<\infty$, the level set $\{x\in\mathcal{E}:I(x)\leq M\}$ is a compact subset of $\mathcal{E}$.
         \end{dfn}
    \begin{dfn}
       \emph{\textbf{(Large deviation principle)}} Let $I$ be a rate function on $\mathcal{E}$.  A family
       $\{X^\e\}$ of $\EE$-valued random elements is  said to satisfy a large deviation principle on $\mathcal{E}$
       speed $\lambda^2(\varepsilon)$ and with rate function $I$, if the following two conditions
       hold:
       \begin{itemize}
         \item[$(a)$](Upper bound) For each closed subset $F$ of $\mathcal{E}$,
              $$
                \limsup_{\e\rightarrow 0}\frac{1}{\lambda^2(\varepsilon)}\log P(X^\e\in F)\leq- \inf_{x\in F}I(x).
              $$
         \item[$(b)$](Lower bound) For each open subset $G$ of $\mathcal{E}$,
              $$
                \liminf_{\e\rightarrow 0}\frac{1}{\lambda^2(\varepsilon)}\log P(X^\e\in G)\geq- \inf_{x\in G}I(x).
              $$
       \end{itemize}
    \end{dfn}

\vskip0.3cm

The Cameron-Martin space associated with the Wiener process $\{W(t), t\in[0,T]\}$ is given by
\beq\label{Cameron-Martin}
\HH_0:=\left\{h:[0,T]\rightarrow \mathbb{R}^m; h \  \text{is absolutely continuous and } \int_0^T\|\dot h(s)\|_{\mathbb{R}^m}^2ds<+\infty\right\}.
\nneq
The space $\HH_0$ is a Hilbert space with inner product
 $$
 \langle h_1, h_2\rangle_{\HH_0}:=\int_0^T\langle \dot h_1(s), \dot h_2(s)\rangle_{\mathbb{R}^m}ds.
 $$

Let $\AA$ denote  the class of $\mathbb{R}^m$-valued $\{\FF_t\}$-predictable processes $\phi$ belonging to $\HH_0$ P-a.s..
Set $S_N=\{h\in \HH_0; \int_0^T\|\dot h(s)\|_{\mathbb{R}^m}^2ds\le N\}$. The set $S_N$ endowed with the weak topology is a Polish space.
Define $\AA_N=\{\phi\in \AA;\phi(\omega)\in S_N, P\text{-a.s.}\}$.

\vskip0.3cm

 Recall the following result from \cite{Budhiraja-Dupuis}.

\bthm\label{thm BD}{ For $\e>0$, let $\Gamma^\e$ be a measurable mapping from $C([0,T];\mathbb{R}^m)$ into $\EE$.
Let $X^\e:=\Gamma^\e(W(\cdot))$. Suppose that
there  exists a measurable map $\Gamma^0:C([0,T];\mathbb{R}^m)\rightarrow \EE$ such that
\begin{itemize}
   \item[(a)] for every $N<+\infty$ and any family $\{h^\e;\e>0\}\subset \AA_N$ satisfying that $h^\e$ converges in distribution as $S_N$-valued random elements to $h$ as $\e\rightarrow 0$,\\
    $\Gamma^\e\left(W(\cdot)+\lambda(\e)\int_0^{\cdot}\dot h^\e(s)ds\right)$ converges in distribution to $\Gamma^0(\int_0^{\cdot}\dot h(s)ds)$ as $\e\rightarrow 0$;
   \item[(b)] for every $N<+\infty$, the set
   $$
 \left\{\Gamma^0\left(\int_0^{\cdot}\dot h(s)ds\right); h\in S_N\right\}
  $$
   is a compact subset of $\EE$.
 \end{itemize}
Then the family $\{X^\e\}_{\e>0}$ satisfies a large deviation principle in $\EE$ with the rate function $I$ given by
\beq\label{rate function}
I(g):=\inf_{\{h\in \HH_0;g=\Gamma^0(\int_0^{\cdot}\dot h(s)ds)\}}\left\{\frac12\int_0^T\|\dot h(s)\|_{\mathbb{R}^m}^2ds\right\},\ g\in\EE,
\nneq
with the convention $\inf\{\emptyset\}=\infty$.
 }\nthm

\subsection{Our main result}

\vskip 0.2cm

Set $Z^{\varepsilon}(t):=(u^{\varepsilon}(t)-u^{0}(t))/\sqrt{\varepsilon}\lambda(\varepsilon)$. It is easy to see that $Z^{\e}$ satisfies the following stochastic evolution equation:
\begin{eqnarray}\label{MDP 01}
& &dZ^{\varepsilon}(t)+\nu\widehat{A}Z^{\varepsilon}(t)dt+\widehat{B}\big(Z^{\varepsilon}(t),u^{0}(t)+\sqrt{\varepsilon}\lambda(\varepsilon)Z^{\varepsilon}(t)\big)dt
+\widehat{B}\big(u^{0}(t),Z^{\varepsilon}(t)\big)dt\nonumber\\
&=&
\frac{1}{\sqrt{\e}\lambda(\varepsilon)}\big(\widehat{F}(u^0(t)+\sqrt{\varepsilon}\lambda(\varepsilon)Z^{\varepsilon}(t),t)-\widehat{F}(u^0(t),t)\big)dt\nonumber\\
& &+\frac{1}{\lambda(\e)}\widehat{G}\big(u^{0}(t)+\sqrt{\varepsilon}\lambda(\varepsilon)Z^{\varepsilon}(t),t\big)dW(t).
\end{eqnarray}

\vskip 0.2cm

The solution of (\ref{MDP 01}) determines a mapping $\Gamma^{\e}$ from $C(0,T;R^{m})$ to $C(0,T;\mathbb{V})$ so that $\Gamma^{\e}(W)=Z^{\e}$.
Let $N$ be any fixed positive integer. Fixed $h\in S_{N}$, consider the deterministic PDE:
\begin{eqnarray}\label{MDP 02}
dX(t)+\nu\widehat{A}X(t)dt+\widehat{B}(X(t),u^0(t))dt+\widehat{B}(u^0(t),X(t))dt\nonumber\\
\quad=\widehat{F}'(u^0(t),t)X(t)dt+\widehat{G}(u^0(t),t)\dot{h}(t)dt.
\end{eqnarray}

\vskip 0.2cm

Theorem \ref{Thm condition 02} below implies that there exists a unique solution $X(t)\in C(0,T;\mathbb{V})$ to the equation (\ref{MDP 02}). Define
$$ \Gamma^{0}(\int_{0}^{\cdot}\dot{h}(s)ds):=X. $$

\vskip 0.2cm

Let $I:C(0,T;\mathbb{V})\rightarrow[0,\infty]$ be defined as in (\ref{rate function}).
\begin{thm}\label{th main}
Assume that {\bf (F1)}, {\bf (F2)}, {\bf (G)} and {\bf (I)} hold. Then the family $\{Z^{\e}\}_{\e>0}$ of system (\ref{MDP 01}) satisfied a large deviation principle on $C([0,T];\mathbb{V})$ with speed $\lambda^2(\e)$ and with the good rate function I with respect to the topology of uniform convergence.
\end{thm}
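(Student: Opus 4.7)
The plan is to verify the two hypotheses of the Budhiraja--Dupuis criterion (Theorem \ref{thm BD}). Condition (b) is the easier half: I would first establish well-posedness of the skeleton equation (\ref{MDP 02}) for each $h\in\HH_0$ by Galerkin approximation, then show that the solution map $h\mapsto X^h$ is continuous from $S_N$ (endowed with the weak topology, which makes it a compact metric space) into $C([0,T];\mathbb{V})$. Continuity is delicate only in the stochastic integral term, which here is deterministic: $\int_0^\cdot \widehat G(u^0(s),s)\dot h(s)\,ds$ depends continuously on $h$ in the weak topology because $s\mapsto \widehat G(u^0(s),s)$ is $\mathbb{V}$-valued and Hilbert--Schmidt-bounded in view of Lemma \ref{Regularity}, (\ref{Jian G}) and (\ref{G-01}).

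For condition (a), I would use Girsanov to recognise $Z^{\e,h^\e}:=\Gamma^\e\bigl(W+\lambda(\e)\int_0^\cdot\dot h^\e(s)\,ds\bigr)$ as the solution of the controlled SPDE obtained from (\ref{MDP 01}) by adding the drift $\widehat G(u^0+\sqrt\e\lambda(\e)Z^{\e,h^\e},\cdot)\dot h^\e\,dt$. The core analytic task is to prove uniform-in-$\e$ bounds of the form
\[
\sup_{\e\in(0,\e_0]}\sup_{h^\e\in\AA_N} E\Bigl[\sup_{t\in[0,T]}\|Z^{\e,h^\e}(t)\|_{\mathbb{V}}^p+\int_0^T\|Z^{\e,h^\e}(s)\|_{\mathbb{W}}^2 ds\Bigr]\le C_{N,p,T}.
\]
The obstruction (foreshadowed by the authors' remark about Lemma \ref{MDP lemma 02}) is that the controlled equation carries a quadratic nonlinearity $\sqrt\e\lambda(\e)\,\widehat B(Z^{\e,h^\e},Z^{\e,h^\e})$, whose energy estimate via Lemma \ref{Lem-B-01} formally requires the $\mathbb{W}$-norm of $Z^{\e,h^\e}$ that we are trying to bound. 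I would therefore work with Galerkin truncations $Z^{\e,h^\e,n}$ on $\mathrm{span}\{e_1,\dots,e_n\}$, where all norms are equivalent, apply It\^o's formula to $\|Z^{\e,h^\e,n}\|_{\mathbb{V}}^p$, exploit the antisymmetry $\langle\widehat B(u,v),v\rangle=0$ to cancel the leading cubic term against $u^0$ (using Lemma \ref{Regularity} for $\|u^0\|_{\mathbb{W}}$), control the control-drift term by $\|\widehat G\|\,\|\dot h^\e\|_{\mathbb{R}^m}$ (with $\int_0^T\|\dot h^\e\|^2\le N$ providing a Gronwall factor) and use the smallness $\sqrt\e\lambda(\e)\to 0$ to absorb the extra quadratic drift into the viscous dissipation. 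Gronwall then yields the desired uniform estimate, which survives passage to the limit $n\to\infty$.

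With these bounds in hand, the next step is tightness. As the authors note, tightness is only achievable in a larger space than $C([0,T];\mathbb{V})$, for instance $C([0,T];\mathbb{W}^*_w)\cap L^2(0,T;\mathbb{V})$, obtained by combining the uniform $L^\infty_tL^2_\omega(\mathbb{V})\cap L^2_tL^2_\omega(\mathbb{W})$ estimate with an Aldous-type time-regularity bound on the equation's right-hand side in $\mathbb{W}^*$. By Prokhorov and the Skorohod representation, pass to an a.s.\ convergent subsequence and identify the limit $X$ as a solution of (\ref{MDP 02}) with control $\dot h$: the cross terms $\sqrt\e\lambda(\e)\widehat B(Z^{\e,h^\e},Z^{\e,h^\e})$ vanish; the quotient $(\widehat F(u^0+\sqrt\e\lambda(\e)Z^{\e,h^\e})-\widehat F(u^0))/(\sqrt\e\lambda(\e))$ converges to $\widehat F'(u^0)X$ by (\ref{Jian F1})--(\ref{Jian F2}) and Taylor's formula; $\widehat G(u^0+\sqrt\e\lambda(\e)Z^{\e,h^\e})\dot h^\e\to \widehat G(u^0)\dot h$ in the appropriate sense using Lipschitz continuity and the weak convergence $\dot h^\e\rightharpoonup\dot h$; and the stochastic term $\lambda(\e)^{-1}\int_0^\cdot\widehat G\,dW$ vanishes in $L^2(\Omega;C([0,T];\mathbb{V}))$ by It\^o isometry.

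Finally I would upgrade the weak-space convergence to convergence in $C([0,T];\mathbb{V})$, as the theorem requires. This is done by combining the a.s.\ limit just obtained with an energy-balance argument: taking the $\mathbb{V}$-norm in the limit equation for $X$ and comparing with the limsup of the $\mathbb{V}$-energy of $Z^{\e,h^\e}$, one upgrades weak to strong convergence in $\mathbb{V}$, uniformly in $t$. Uniqueness of $X$ (from well-posedness of (\ref{MDP 02})) removes the need for a subsequence and gives full weak convergence in $C([0,T];\mathbb{V})$. The main obstacle, as anticipated, is the uniform a priori estimate on the controlled Galerkin system, where the smallness of $\sqrt\e\lambda(\e)$ must be carefully leveraged against the worst-order nonlinearity in $\widehat B$ while the $L^2$-only integrability of $\dot h^\e$ places the control-drift term on the borderline of Gronwall's inequality.
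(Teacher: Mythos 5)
Your overall skeleton (Budhiraja--Dupuis criterion, Girsanov identification of the controlled equation, Galerkin a priori bounds, tightness in a weaker space, Skorokhod, identification of the limit, then upgrading to $C([0,T];\mathbb{V})$) is the same as the paper's, but there is a genuine gap at the step you yourself single out as the core analytic task: the uniform bound $E\int_0^T\|Z^{\e,h^\e}(s)\|^2_{\mathbb{W}}ds\le C_{N,p,T}$ cannot be obtained the way you propose. In this model the dissipation is \emph{not} smoothing: $\widehat A=(I+\alpha A)^{-1}A$ is a bounded operator on $\mathbb{V}$ (cf.\ (\ref{Estation-A})), so the It\^o computation for $\|Z^{\e,h^\e,n}\|_{\mathbb{V}}^p$ produces only the term $\nu p\int\|Z\|_{\mathbb{V}}^{p-2}\|Z\|^2$, i.e.\ control of the $H^1_0$-norm, which is equivalent to the $\mathbb{V}$-norm and gives nothing at the $\mathbb{W}$-level. (Incidentally, at the $\mathbb{V}$-level the quadratic term $\sqrt{\e}\lambda(\e)\widehat B(Z,Z)$ is not the obstruction at all: it vanishes when paired with $Z$ by $\langle\widehat B(u,v),v\rangle=0$, and the remaining term $\langle\widehat B(Z,Z),u^0\rangle$ is handled by (\ref{Eq B-02}) and $\sup_t\|u^0(t)\|_{\mathbb{W}}<\infty$; no absorption into the viscosity is needed.) The real difficulty, and the technical heart of the paper, is establishing the $\mathbb{W}$-norm bound $E[\sup_t\|X^{h^\e}(t)\|_{\mathbb{W}}^p]\le C_{p,N}$ (Lemmas \ref{MDP lemma 01}--\ref{MDP lemma 02}). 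This requires a second, separate It\^o estimate for the quantity $\|X\|_*=|curl(X-\alpha\Delta X)|$ at the Galerkin level, exploiting the transport-type cancellation $\big(curl(X-\alpha\Delta X),curl\big(curl(X-\alpha\Delta X)\times(u^0+\sqrt{\e}\lambda(\e)X)\big)\big)=0$, the identity $curl\big(curl(u^0-\alpha\Delta u^0)\times X\big)=-\Delta(u^0-\alpha\Delta u^0)\times X$ together with the $\mathbb{H}^4$-regularity of $u^0$ from hypothesis {\bf (I)} and Lemma \ref{Regularity}, and the generalized Stokes problem (Lemma \ref{Lem GS}) plus the eigenbasis relation (\ref{Basis}) to pass between the $\mathbb{V}$- and $\mathbb{W}$-inner products of the Galerkin projections. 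None of this appears in your proposal, and your diagnosis of where the smallness of $\sqrt{\e}\lambda(\e)$ and hypothesis {\bf (I)} enter is therefore off target.

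This gap propagates: without the uniform $\sup_t\|X^{h^\e}\|_{\mathbb{W}}$ bound you cannot run the compactness argument giving tightness in $L^2([0,T];\mathbb{V})$ (the compact embedding used is $L^2_t\mathbb{W}\cap W^{\beta,2}_t\mathbb{W}^*\subset L^2_t\mathbb{V}$), you cannot pass to the limit in $\widehat B(X^{h^\e},u^0+\sqrt{\e}\lambda(\e)X^{h^\e})$, and, most importantly, you cannot carry out the final upgrade to convergence in $C([0,T];\mathbb{V})$: in the paper this is a Gronwall estimate on $\|\widetilde X^\e-\widetilde X\|^2_{\mathbb{V}}$ in which the term $\sqrt{\e}\lambda(\e)\langle\widehat B(\widetilde X^\e,\widetilde X^\e),v^\e\rangle$ is bounded by $C\sqrt{\e}\lambda(\e)\|\widetilde X^\e\|^2_{\mathbb{W}}\|v^\e\|_{\mathbb{V}}$, i.e.\ precisely by the $\mathbb{W}$-bound. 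Your alternative ``energy-balance'' upgrade would face the same need for $\mathbb{W}$-control of the cross terms, and as described it would at best give strong convergence in $L^2_t\mathbb{V}$ or pointwise in $t$, not the uniform-in-$t$ convergence the theorem asserts. To repair the proposal you would have to supply the $\|\cdot\|_*$-estimate (or an equivalent route to the uniform $\mathbb{W}$-bound), which is exactly the long and technical part of the paper's proof.
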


\noindent{\bf Proof of Theorem \ref{th main}.}

According to Theorem \ref{thm BD}, we need to prove that Condition (a), (b) are fulfilled. The verification of Condition (a) will be given
 by Theorem \ref{Thm condition 02} below. Condition (b) will be established in Theorem \ref{Thm condition 01} below.

\subsection{The Proofs}

\vskip 0.2cm

For any fixed family $\{h^{\e};\e>0\}\subset \AA_N$, By Girsanov Transformation and the definition of $P^{\e}$, we know that  $X^{h^{\e}}:=\Gamma^{\e}(W(\cdot)+\lambda(\e)\int_{0}^{\cdot}\dot{h}^{\e}(s)ds)$ is the solution of the following SPDE:
\begin{eqnarray}\label{MDP 03}
& &dX^{h^{\varepsilon}}(t)+\nu\widehat{A}X^{h^{\varepsilon}}(t)dt+\widehat{B}\big(X^{h^{\varepsilon}}(t),u^{0}(t)
+\sqrt{\varepsilon}\lambda(\varepsilon)X^{h^{\varepsilon}}(t)\big)dt+\widehat{B}\big(u^{0}(t),X^{h^{\varepsilon}}(t)\big)dt\nonumber\\
&=&
\frac{1}{\sqrt{\e}\lambda(\varepsilon)}\big(\widehat{F}(u^0(t)+\sqrt{\varepsilon}\lambda(\varepsilon)X^{h^{\varepsilon}}(t),t)-\widehat{F}(u^0(t),t)\big)dt\nonumber\\
& &+\frac{1}{\lambda(\e)}\widehat{G}\big(u^{0}(t)+\sqrt{\varepsilon}\lambda(\varepsilon)X^{h^{\varepsilon}}(t),t\big)dW(t)
+\widehat{G}\big(u^{0}(t)+\sqrt{\varepsilon}\lambda(\varepsilon)X^{h^{\varepsilon}}(t),t\big)\dot{h}^{\e}(t)dt.
\end{eqnarray}
\vskip 0.2cm

First we will establish some priori estimates.
\begin{lem}\label{MDP lemma 01}
There exists $\e_{0}>0$ and a constant $C_{p,N}$ such that
\begin{eqnarray}\label{MDP Estimation 01}
\sup_{\e\in(0,\e_{0})}E\Big[\sup_{s\in[0,T]}\|X^{h^{\varepsilon}}(s)\|_{\mathbb{W}}^p\Big]\leq C_{p,N},\ \ \text{for any }2\leq p<\infty.
\end{eqnarray}
\end{lem}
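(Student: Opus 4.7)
The plan is to close an $L^p(\Omega; L^\infty_t \mathbb{W})$ energy estimate on a Galerkin truncation of (\ref{MDP 03}), using in an essential way the smallness $\sqrt{\e}\lambda(\e)\to 0$ to tame the quadratic bilinear perturbation. The reliance on a finite-dimensional reduction is forced by the fact that, for the full equation, the inner product of $\widehat{B}(X^{h^\e},X^{h^\e})$ with $X^{h^\e}$ in $\mathbb{W}$ is not \emph{a priori} finite (the bilinear map only lands in $\mathbb{W}^\ast$), as already flagged in the introduction.

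First I would project (\ref{MDP 03}) onto $\mathbb{W}_n:=\mathrm{span}\{e_1,\dots,e_n\}$ using the eigenbasis from (\ref{Basis}), yielding a finite-dimensional SDE for $X^{h^\e}_n$ with globally Lipschitz coefficients (by (\ref{Jian F1})--(\ref{Jian G}) and continuity of $\widehat{B}$ on $\mathbb{W}_n$); classical SDE theory gives a unique strong solution on $[0,T]$. Introduce the localizing stopping time $\tau^{n}_J:=\inf\{t\ge 0:\|X^{h^\e}_n(t)\|_\mathbb{W}\ge J\}$ so that Itô calculus applied to $\|X^{h^\e}_n(t)\|_\mathbb{W}^{p}$ produces only integrable quantities on $[0,t\wedge\tau^n_J]$.

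Second, I would expand $d\|X^{h^\e}_n\|_\mathbb{W}^p$ by Itô's formula. The Stokes term $p\nu\|X^{h^\e}_n\|_\mathbb{W}^{p-2}(\widehat{A}X^{h^\e}_n,X^{h^\e}_n)_\mathbb{W}$ gives a positive dissipation that I bound below via the spectral property (\ref{Basis}) and (\ref{W-02}). For the bilinear terms I use the splitting
\[
\widehat{B}\bigl(X^{h^\e}_n,u^0+\sqrt{\e}\lambda(\e)X^{h^\e}_n\bigr)+\widehat{B}(u^0,X^{h^\e}_n)
=\widehat{B}(X^{h^\e}_n,u^0)+\widehat{B}(u^0,X^{h^\e}_n)+\sqrt{\e}\lambda(\e)\widehat{B}(X^{h^\e}_n,X^{h^\e}_n).
\]
The two ``deterministic'' pieces are controlled by Lemma \ref{Lem B} and the uniform $\mathbb{H}^4$-bound on $u^0$ from Lemma \ref{Regularity}, producing contributions of order $\|X^{h^\e}_n\|_\mathbb{V}\|X^{h^\e}_n\|_\mathbb{W}$, which Young's inequality distributes between a small multiple of the dissipation and a $\|X^{h^\e}_n\|_\mathbb{W}^{p}$ term destined for Gronwall. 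The Lipschitz drift is handled by (\ref{Jian F1}); the Brownian integral by BDG together with (\ref{Jian G}), giving a term $\delta\,\mathbb{E}\sup_{s\le T\wedge\tau^n_J}\|X^{h^\e}_n(s)\|_\mathbb{W}^{p}$ absorbed on the left plus an integrand proportional to $\|X^{h^\e}_n\|_\mathbb{V}^p$; the control drift $\lambda(\e)^{-1}\widehat{G}(\cdots)\lambda(\e)\dot h^\e=\widehat{G}(\cdots)\dot h^\e$ is bounded by Cauchy--Schwarz and (\ref{Jian G}), producing a Gronwall driver $(1+|\dot h^\e(s)|^2)\|X^{h^\e}_n\|_\mathbb{W}^{p}$ whose time integral is bounded uniformly in $\e$ by $T+N$ since $h^\e\in\mathcal{A}_N$.

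The remaining and critical term is $p\sqrt{\e}\lambda(\e)\|X^{h^\e}_n\|_\mathbb{W}^{p-2}\bigl(\widehat{B}(X^{h^\e}_n,X^{h^\e}_n),X^{h^\e}_n\bigr)_\mathbb{W}$. Bounding via (\ref{Ineq B 02}) gives a contribution of size $C\sqrt{\e}\lambda(\e)\|X^{h^\e}_n\|_\mathbb{V}^{2}\|X^{h^\e}_n\|_\mathbb{W}^{p-1}\le C\sqrt{\e}\lambda(\e)\|X^{h^\e}_n\|_\mathbb{W}^{p+1}$. On $[0,t\wedge\tau^n_J]$ the extra $\|X^{h^\e}_n\|_\mathbb{W}$ factor is $\le J$, so the whole expression is dominated by $CJ\sqrt{\e}\lambda(\e)\|X^{h^\e}_n\|_\mathbb{W}^{p}$; choosing $\e_0(J)$ small enough that $CJ\sqrt{\e}\lambda(\e)$ is dwarfed by the dissipation, this term is absorbed. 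The Gronwall step with driver $1+|\dot h^\e(s)|^2$ then yields
\[
\mathbb{E}\bigl[\sup_{s\in[0,T\wedge\tau^n_J]}\|X^{h^\e}_n(s)\|_\mathbb{W}^p\bigr]\le C_{p,N}
\]
\emph{uniformly in $n$ and $J$} (up to harmless $\e_0$-dependence). Fatou sends $J\to\infty$, and the standard Galerkin passage to the limit (weak-$\ast$ compactness in $L^p(\Omega;L^\infty_t\mathbb{W})$, monotone lower semicontinuity of the norm) transfers the bound to $X^{h^\e}$, giving (\ref{MDP Estimation 01}).

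The main obstacle is the quadratic bilinear contribution $\sqrt{\e}\lambda(\e)\widehat{B}(X^{h^\e}_n,X^{h^\e}_n)$: it is what prevents a clean apriori $\mathbb{W}$-estimate on the full (non-truncated) equation, and what forces both the Galerkin reduction and the use of the moderate-deviation scaling $\sqrt{\e}\lambda(\e)\to 0$. The other terms are routine given the hypotheses and the regularity of $u^0$.
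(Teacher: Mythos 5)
Your skeleton (Galerkin truncation, It\^o's formula on a power of the norm, stopping time, Gronwall, weak-$\ast$ passage to the limit) matches the paper, but the step that actually carries the difficulty is not correct as you state it. When you apply It\^o's formula to $\|X_n\|_{\mathbb{W}}^p$, the drift terms appear paired with $X_n$ in the $\mathbb{W}$-inner product, i.e.\ through $\big(curl(\cdot-\alpha\Delta\cdot),curl(X_n-\alpha\Delta X_n)\big)$, and Lemma \ref{Lem B}/Lemma \ref{Lem-B-01} only control the $\mathbb{W}^*$--$\mathbb{W}$ (equivalently $\mathbb{V}$-type) pairings. In particular $(\widehat{B}(X_n,X_n),X_n)_{\mathbb{W}}$ and $(\widehat{B}(X_n,u^0),X_n)_{\mathbb{W}}$ involve $curl\big(curl(X_n-\alpha\Delta X_n)\times\cdot\big)$, i.e.\ one derivative more than $\|X_n\|_{\mathbb{W}}$ controls, so your bounds $\lesssim\|X_n\|_{\mathbb{V}}^2\|X_n\|_{\mathbb{W}}$ via (\ref{Ineq B 02}) are unjustified. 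The paper closes the estimate not by smallness of $\sqrt{\e}\lambda(\e)$ but by exact structural cancellations: it splits $\|\cdot\|_{\mathbb{W}}^2=\|\cdot\|_{\mathbb{V}}^2+\|\cdot\|_{\ast}^2$, does the $\mathbb{V}$-estimate first (where the quadratic term vanishes identically by (\ref{Eq B-03})), and in the $\ast$-estimate uses
$\big(curl(X-\alpha\Delta X),curl(curl(X-\alpha\Delta X)\times(u^0+\sqrt{\e}\lambda(\e)X))\big)=0$
together with $curl\big(curl(u^0-\alpha\Delta u^0)\times X\big)=-\Delta(u^0-\alpha\Delta u^0)\times X$, which is where the $\mathbb{H}^4$-regularity of $u^0$ (hypothesis {\bf (I)}) enters. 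Moreover, your absorption of the quadratic term by choosing $\e_0=\e_0(J)$ is circular: at a fixed $\e$ the estimate then holds only for $J\le J(\e)$, so you cannot let $J\to\infty$ by Fatou and remove the localization; equivalently, keeping the term as a Gronwall driver gives a constant like $e^{CJ\sqrt{\e}\lambda(\e)T}$ that blows up in $J$.

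A second missing ingredient is the treatment of the noise in the $\mathbb{W}$-norm. Since $\|X_n\|_{\mathbb{W}}^2=\sum_i\lambda_i^2(X_n,e_i)_{\mathbb{V}}^2$, the It\^o correction (and the quadratic variation entering BDG) involves $\sum_i\lambda_i^2\big(G(\cdot),e_i\big)\big(G(\cdot),e_i\big)'$, which is \emph{not} bounded by $\|G\|^2_{\mathbb{V}^{\otimes m}}$ merely by (\ref{Jian G}), because $G$ takes values in $\mathbb{V}$, not $\mathbb{W}$, and the weights $\lambda_i^2$ are unbounded as the Galerkin level grows. The paper handles this with the generalized Stokes problem (Lemma \ref{Lem GS}): writing $\lambda_i(G,e_i)=(\widetilde{G}^{\e},e_i)_{\mathbb{W}}$ with $\widetilde{G}^{\e}=(I+\alpha A)^{-1}G\in\mathbb{W}^{\otimes m}$ and $\|\widetilde{G}^{\e}\|_{\mathbb{W}^{\otimes m}}\le C\|G\|_{\mathbb{V}^{\otimes m}}$, so the weighted sums collapse to $\|G\|^2_{\mathbb{V}^{\otimes m}}\lesssim\|u^0+\sqrt{\e}\lambda(\e)X_n\|^2_{\mathbb{V}}$. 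Without this (or an equivalent device) your ``BDG $+$ (\ref{Jian G})'' step does not give a bound uniform in the Galerkin dimension. So the proposal, as written, has genuine gaps precisely at the points that make this lemma nontrivial.
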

Set $\mathbb{W}_M=Span(e_1,\cdots,e_M)$. Let $X^{M,h^{\e}}\in\mathbb{W}_M$ be the Galerkin approximations of (\ref{MDP 03}) satisfying
\begin{eqnarray}\label{MDP 04}
& &d(X^{M,h^{\e}},e_i)_{\mathbb{V}}+\nu((X^{M,h^{\e}},e_i))dt\nonumber\\
&=&-\langle\widehat{B}(X^{M,h^{\e}},u^{0}+\sqrt{\e}\lambda(\e)X^{M,h^{\e}}),e_i\rangle dt
-\langle\widehat{B}(u^{0},X^{M,h^{\e}}),e_i\rangle dt\nonumber\\
& &+\frac{1}{\sqrt{\e}\lambda(\varepsilon)}\big(F(u^{0}+\sqrt{\e}\lambda(\e)X^{M,h^{\e}},t)-F(u^{0},t),e_i\big)dt\nonumber\\
& &+\frac{1}{\lambda(\e)}\big(G(u^{0}+\sqrt{\e}\lambda(\e)X^{M,h^{\e}},t),e_i\big)dW(t)\nonumber\\
& &+(G(u^{0}+\sqrt{\e}\lambda(\e)X^{M,h^{\e}},t)\dot{h}^{\e}(t),e_i)dt,\ \i=1,2,\cdots,M.
\end{eqnarray}
\vskip 0.2cm

As in the proof of Theorem 3.4 in \cite{RS-12}, one can show that $\lim_{M\rightarrow\infty}X^{M,h^{\e}}=X^{h^{\e}}$ with respect to the weak-star topology in $L^p(\Omega,\mathcal{F},P,L^\infty([0,T],\mathbb{W}))$ for any $p\geq 2$. Hence Lemma \ref{MDP lemma 01} will follow from the following result, whose proof is rather involved.
\begin{lem}\label{MDP lemma 02}
For $p\geq2$, we have
\begin{eqnarray}\label{MDP Estimation 02}
\sup_{\e\in(0,\e_{0})}E\Big[\sup_{s\in[0,T]}\|X^{M,h^{\varepsilon}}(s)\|_{\mathbb{V}}^p\Big]\leq C_{p,N},
\end{eqnarray}
\begin{eqnarray}\label{MDP Estimation 03}
\sup_{\e\in(0,\e_{0})}E\Big[\sup_{s\in[0,T]}\|X^{M,h^{\varepsilon}}(s)\|_{\mathbb{W}}^p\Big]\leq C_{p,N}.
\end{eqnarray}
\end{lem}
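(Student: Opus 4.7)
The plan is to prove the two estimates together by applying Itô's formula to $\|X^{M,h^\varepsilon}(t)\|_\mathbb{V}^{2p}$ and $\|X^{M,h^\varepsilon}(t)\|_\mathbb{W}^{2p}$ separately, using the component equation (\ref{MDP 04}) and the structural properties of $\widehat{B}$, $\widehat{F}$, $\widehat{G}$ collected in Sections 2–3, and then closing by Gronwall's inequality. Throughout write $X:=X^{M,h^\varepsilon}$ for brevity and recall from Lemma \ref{Regularity} that $\sup_{t\in[0,T]}\|u^0(t)\|_\mathbb{W}\le C$. The first bound (\ref{MDP Estimation 02}) will feed into the derivation of the stronger second bound (\ref{MDP Estimation 03}).

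\textbf{The $\mathbb{V}$-estimate.} Applying Itô to $\|X(t)\|_\mathbb{V}^{2p}$ using (\ref{MDP 04}), two of the three trilinear terms vanish: $\langle\widehat{B}(u^0,X),X\rangle=0$ by (\ref{Eq B-03}) and $\langle\widehat{B}(X,\sqrt{\varepsilon}\lambda(\varepsilon)X),X\rangle=0$ likewise. The remaining term $\langle\widehat{B}(X,u^0),X\rangle=-\langle\widehat{B}(X,X),u^0\rangle$ is controlled by (\ref{Eq B-02}) by $C_B\|X\|_\mathbb{V}^2\|u^0\|_\mathbb{W}$. The finite-difference drift is Lipschitz via (\ref{Jian F1}) applied to $\widehat F(u^0+\sqrt{\varepsilon}\lambda X,t)-\widehat F(u^0,t)$. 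The stochastic integral is estimated by the BDG inequality combined with (\ref{G-01})–(\ref{Jian G}) and Young's inequality, producing a term $\delta E[\sup\|X\|_\mathbb{V}^{2p}]$ to be absorbed plus lower-order integrands. Finally the control term $(\widehat G(u^0+\sqrt{\varepsilon}\lambda X,t)\dot h^\varepsilon,X)_\mathbb{V}$ is bounded by $C(\|u^0\|_\mathbb{V}+\sqrt{\varepsilon}\lambda(\varepsilon)\|X\|_\mathbb{V})|\dot h^\varepsilon(s)|\|X\|_\mathbb{V}$; taking $\varepsilon$ small so that $\sqrt{\varepsilon}\lambda(\varepsilon)\le 1$ and applying Young, this produces a Gronwall integrand involving $(1+|\dot h^\varepsilon(s)|^2)\|X\|_\mathbb{V}^{2p}$, which is admissible because $h^\varepsilon\in\AA_N$ gives $\int_0^T|\dot h^\varepsilon(s)|^2ds\le N$ almost surely. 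Gronwall then yields (\ref{MDP Estimation 02}) with a constant depending only on $p$ and $N$.

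\textbf{The $\mathbb{W}$-estimate and main obstacle.} For (\ref{MDP Estimation 03}) one exploits the Galerkin structure: since $\{e_i\}$ is $\mathbb{W}$-orthonormal and $\mathbb{V}$-orthogonal with $(v,e_i)_\mathbb{W}=\lambda_i(v,e_i)_\mathbb{V}$, writing $X=\sum_i\alpha_i e_i$ gives $\|X\|_\mathbb{W}^2=\sum_i\alpha_i^2=\sum_i\lambda_i^2|(X,e_i)_\mathbb{V}|^2$. Thus one multiplies the $i$-th equation in (\ref{MDP 04}) by $\lambda_i^2(X,e_i)_\mathbb{V}$, sums in $i$, and applies Itô to $\|X\|_\mathbb{W}^{2p}$. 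The dissipative term gives $-2p\nu\|X\|_\mathbb{W}^{2p-2}\sum_i\lambda_i^2\|e_i\|^2(X,e_i)_\mathbb{V}^2$, which has a definite sign. The new difficulty is that the trilinear contributions no longer vanish: the piece coming from $\widehat B(X,u^0)$ is controlled by (\ref{Ineq B 01}) and Lemma \ref{Regularity}, yielding a term of the form $C\|X\|_\mathbb{W}^{2p}\|u^0\|_{\mathbb{H}^3}$, which is Gronwall-compatible. The real obstacle is the cubic perturbation $\widehat B(X,\sqrt{\varepsilon}\lambda(\varepsilon)X)$, which does not vanish when tested against the $\mathbb{W}$-dual representative and produces a term of order $\sqrt{\varepsilon}\lambda(\varepsilon)\|X\|_\mathbb{W}^{2p+1}$; here one uses the smallness of $\sqrt{\varepsilon}\lambda(\varepsilon)$ together with the a priori $\mathbb{V}$-bound (\ref{MDP Estimation 02}) and interpolation to rewrite this in a form that can be absorbed into the dissipation. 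The stochastic and control terms are treated as in the $\mathbb{V}$-case, again using $h^\varepsilon\in\AA_N$ to bound the random integral $\int_0^T|\dot h^\varepsilon(s)|^2ds$. After choosing $\varepsilon$ sufficiently small (so $\sqrt{\varepsilon}\lambda(\varepsilon)$ is small) and $\delta$ small, Gronwall delivers (\ref{MDP Estimation 03}) uniformly in $M$ and $\varepsilon\in(0,\varepsilon_0)$. Passing to the weak-star limit in $M$ then yields Lemma \ref{MDP lemma 01}.
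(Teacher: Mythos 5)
Your treatment of the $\mathbb{V}$-bound (\ref{MDP Estimation 02}) follows essentially the paper's route (It\^o on a power of $\|X\|_{\mathbb{V}}$, the cancellations $\langle\widehat{B}(u,v),v\rangle=0$, (\ref{Jian F1}), BDG, and $h^\e\in\AA_N$), and is acceptable in outline; the only point you gloss over is the It\^o correction $\frac{1}{\lambda^2(\e)}\sum_{i\le M}\lambda_i(G(\cdot),e_i)(G(\cdot),e_i)'$, which in the Galerkin setting is not simply $\|\widehat G\|^2_{\mathbb{V}^{\otimes m}}$: the paper bounds it through the generalized Stokes lift $\widetilde G^\e$ of Lemma \ref{Lem GS} and the eigenvalue relation (\ref{Basis}) (a Bessel-type argument), see (\ref{MDP V 06*}).

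The genuine gap is in your $\mathbb{W}$-estimate, precisely at the point you yourself flag as "the real obstacle". The term coming from $\widehat B(X,\sqrt{\e}\lambda(\e)X)$ is not absorbed in the paper -- it vanishes identically. Writing $q:=curl(X-\alpha\Delta X)$ (a scalar in 2D) and using $curl(q\times v)=(v\cdot\nabla)q$ for divergence-free $v$, one has the exact cancellation $\big(q,\,curl\big(q\times(u^0+\sqrt{\e}\lambda(\e)X)\big)\big)=0$, which kills both the cubic self-interaction and the curl-level contribution of $\widehat B(X,u^0)$ in one stroke; this identity is the linchpin of the proof of (\ref{MDP Estimation 03}). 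Your proposed substitute (smallness of $\sqrt{\e}\lambda(\e)$, the a priori $\mathbb{V}$-bound, interpolation, absorption into the dissipation) cannot work: at the $\|\cdot\|_{*}$ level the only dissipative term is $\frac{\nu}{\alpha}\|X\|_{*}^2$, of the same order as the quantity being estimated and with no gain of regularity, so a contribution of size $\sqrt{\e}\lambda(\e)\|X\|_{\mathbb{W}}^{2p+1}$ is super-linear in the unknown and cannot be closed by Gronwall uniformly in $\e$ and in the stopping level $J$; moreover, even estimating $curl\big(q\times X\big)$ in $L^2$ would require $\mathbb{H}^4$-control of $X$, which is not available. In addition, the term you do not discuss, $\widehat B(u^0,X)$, is the one that genuinely survives: it is handled via $curl\big(curl(u^0-\alpha\Delta u^0)\times X\big)=-\Delta(u^0-\alpha\Delta u^0)\times X$ together with $u^0\in\mathbb{H}^4$ from Lemma \ref{Regularity} (this is where hypothesis {\bf (I)} enters), and (\ref{Ineq B 01}) cannot be invoked for it because the pairing is at the curl level against the $L^2$ function $q$, not a $\mathbb{W}^*$--$\mathbb{W}$ duality. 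Finally, your claim that the viscous term yields the sign-definite expression $-2p\nu\|X\|_{\mathbb{W}}^{2p-2}\sum_i\lambda_i^2\|e_i\|^2(X,e_i)^2_{\mathbb{V}}$ is unjustified, since the $e_i$ are not orthogonal for $((\cdot,\cdot))$; the paper instead extracts the good term $\frac{\nu}{\alpha}\|X\|_{*}^2$ from $curl(-\nu\Delta X)=\frac{\nu}{\alpha}curl(X-\alpha\Delta X)-\frac{\nu}{\alpha}curl\,X$ and controls the remainder. Without the cancellation identity and the $\mathbb{H}^4$-argument for $\widehat B(u^0,X)$, the proof of (\ref{MDP Estimation 03}) does not close.
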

\begin{proof}
Recall $\|v\|_*=|curl(v-\alpha\Delta v)|\ {\rm for\ any}\ v\in\mathbb{W}.$ Define
$$\tau_{J}=inf\{t\geq0,\|X^{M,h^{\e}}(t)\|_{\mathbb{V}}+\|X^{M,h^{\e}}(t)\|_{\ast}\geq J\}.$$
Applying ${\rm It\hat{o}}$'s formula,
\begin{eqnarray*}
& &d(X^{M,h^{\e}},e_i)_{\mathbb{V}}^2+2\big(X^{M,h^{\e}},e_i\big)_{\mathbb{V}}\Big(\nu((X^{M,h^{\e}},e_i))dt\nonumber\\
& &\quad+\langle\widehat{B}(X^{M,h^{\e}},u^{0}+\sqrt{\e}\lambda(\e)X^{M,h^{\e}}),e_i\rangle dt
+\langle\widehat{B}(u^{0},X^{M,h^{\e}}),e_i\rangle dt\Big)\nonumber\\
&=&
2\big(X^{M,h^{\e}},e_i\big)_{\mathbb{V}}\Big(\frac{1}{\sqrt{\e}\lambda(\varepsilon)}\big(F(u^{0}+
\sqrt{\e}\lambda(\e)X^{M,h^{\e}},t)-F(u^{0},t),e_i\big)dt\nonumber\\
& &+\frac{1}{\lambda(\e)}\big(G(u^{0}+\sqrt{\e}\lambda(\e)X^{M,h^{\e}},t),e_i\big)dW(t)
+\big(G(u^{0}+\sqrt{\e}\lambda(\e)X^{M,h^{\e}},t)\dot{h}^{\e}(t),e_i\big)dt\Big)\nonumber\\
& &+\frac{1}{\lambda(\e)^2}\big(G(u^{0}+\sqrt{\e}\lambda(\e)X^{M,h^{\e}},t),e_i\big)
\big(G(u^{0}+\sqrt{\e}\lambda(\e)X^{M,h^{\e}},t),e_i\big)'dt.
\end{eqnarray*}
Noting that $\|X^{M,h^{\e}}\|^2_\mathbb{V}=\sum_{i=1}^M\lambda_i(X^{M,h^{\e}},e_i)^2_\mathbb{V}$ and $\langle\widehat{B}(u,v),v\rangle=0$, it follows that
\begin{eqnarray}\label{MDP ito 1}
& &d\|X^{M,h^{\e}}\|_{\mathbb{V}}^2+2\nu\|X^{M,h^{\e}}\|^2dt
+2\langle\widehat{B}(X^{M,h^{\e}},u^{0}),X^{M,h^{\e}}\rangle dt\nonumber\\
&=&
\frac{2}{\sqrt{\e}\lambda(\varepsilon)}\big(F(u^{0}+
      \sqrt{\e}\lambda(\e)X^{M,h^{\e}},t)-F(u^{0},t),X^{M,h^{\e}}\big)dt\nonumber\\
& &+\frac{2}{\lambda(\e)}\big(G(u^{0}+\sqrt{\e}\lambda(\e)X^{M,h^{\e}},t),X^{M,h^{\e}}\big)dW(t)\nonumber\\
& &+2\big(G(u^{0}+\sqrt{\e}\lambda(\e)X^{M,h^{\e}},t)\dot{h}^{\e}(t),X^{M,h^{\e}}\big)dt\nonumber\\
& &+\frac{1}{\lambda^2(\e)}\sum_{i=1}^{M}\lambda_{i}\big(G(u^{0}+\sqrt{\e}\lambda(\e)X^{M,h^{\e}},t),e_i\big)
      \big(G(u^{0}+\sqrt{\e}\lambda(\e)X^{M,h^{\e}},t),e_i\big)'dt.\nonumber\\
\end{eqnarray}
By ${\rm It\hat{o}}$'s formula again, for $p\geq4$,
\begin{eqnarray}
& &d\|X^{M,h^{\e}}\|_{\mathbb{V}}^p\nonumber\\
&=&\frac{p}{2}\|X^{M,h^{\e}}\|_{\mathbb{V}}^{p-2}
      \Big(
            -2\nu\|X^{M,h^{\e}}\|^2dt-2\langle\widehat{B}(X^{M,h^{\e}},u^{0}),X^{M,h^{\e}}\rangle\nonumber\\
         & &\quad+\frac{2}{\sqrt{\e}\lambda(\varepsilon)}\big(F(u^{0}+
                  \sqrt{\e}\lambda(\e)X^{M,h^{\e}},t)-F(u^{0},t),X^{M,h^{\e}}\big)dt\nonumber\\
         & &\quad+\frac{2}{\lambda(\e)}\big(G(u^{0}+\sqrt{\e}\lambda(\e)X^{M,h^{\e}},t),X^{M,h^{\e}}\big)dW(t)\nonumber\\
         & &\quad+2\big(G(u^{0}+\sqrt{\e}\lambda(\e)X^{M,h^{\e}},t)\dot{h}^{\e}(t),X^{M,h^{\e}}\big)dt\\
         & &\quad+\frac{1}{\lambda^2(\e)}\sum_{i=1}^{M}\lambda_i \big(G(u^{0}+\sqrt{\e}\lambda(\e)X^{M,h^{\e}},t),e_i\big)
                  \big(G(u^{0}+\sqrt{\e}\lambda(\e)X^{M,h^{\e}},t),e_i\big)'dt
      \Big)\nonumber\\
& &+p(\frac{p}{2}-1)\frac{1}{\lambda^2(\e)}\|X^{M,h^{\e}}\|_{\mathbb{V}}^{p-4}\nonumber\\
& &\qquad\big(G(u^{0}+\sqrt{\e}\lambda(\e)X^{M,h^{\e}},t),X^{M,h^{\e}}\big)\big(G(u^{0}+\sqrt{\e}\lambda(\e)X^{M,h^{\e}},t),X^{M,h^{\e}}\big)'dt.\nonumber
\end{eqnarray}
Integrate from 0 to $t$, take sup over the interval $[0,T\wedge\tau_{J}]$ and take expectation to get
\begin{eqnarray}\label{MDP V 00}
& &E\Big[\sup_{t\in[0,T\wedge\tau_{J}]}\|X^{M,h^{\e}}\|_{\mathbb{V}}^p\Big]+\nu pE\Big[\int_0^{T\wedge\tau_{J}}\|X^{M,h^{\e}}\|_{\mathbb{V}}^{p-2}\|X^{M,h^{\e}}\|^2dt\Big]\nonumber\\
&\leq&
pE\Big[\int_0^{T\wedge\tau_{J}}\|X^{M,h^{\e}}\|_{\mathbb{V}}^{p-2}
      |\langle\widehat{B}(X^{M,h^{\e}},u^{0}),X^{M,h^{\e}}\rangle|dt\Big]\nonumber\\
& &+\frac{p}{\sqrt{\e}\lambda(\varepsilon)}E\Big[\int_0^{T\wedge\tau_{J}}\|X^{M,h^{\e}}\|_{\mathbb{V}}^{p-2}\big|\big(F(u^{0}+
      \sqrt{\e}\lambda(\e)X^{M,h^{\e}},t)-F(u^{0},t),X^{M,h^{\e}}\big)\big|dt\Big]\nonumber\\
& &+\frac{p}{\lambda(\e)}E\Big[\sup_{t\in[0,T\wedge\tau_{J}]}\int_0^{t}\|X^{M,h^{\e}}\|_{\mathbb{V}}^{p-2}
      \big(G(u^{0}+\sqrt{\e}\lambda(\e)X^{M,h^{\e}},s),X^{M,h^{\e}}\big)dW(s)\Big]\nonumber\\
& &+pE\Big[\int_0^{T\wedge\tau_{J}}\|X^{M,h^{\e}}\|_{\mathbb{V}}^{p-2}
      \big|\big(G(u^{0}+\sqrt{\e}\lambda(\e)X^{M,h^{\e}},t)\dot{h}^{\e}(t),X^{M,h^{\e}}\big)\big|dt\Big]\nonumber\\
& &+\frac{p}{\lambda^2(\e)}E\Big[\int_0^{T\wedge\tau_{J}}\|X^{M,h^{\e}}\|_{\mathbb{V}}^{p-2}
      \sum_{i=1}^{M}\lambda_{i}\big(G(u^{0}+\sqrt{\e}\lambda(\e)X^{M,h^{\e}},t),e_i\big)\nonumber\\
& &\qquad\big(G(u^{0}+\sqrt{\e}\lambda(\e)X^{M,h^{\e}},t),e_i\big)'dt\Big]\nonumber\\
& &+\frac{p}{\lambda^2(\e)}(\frac{p}{2}-1)E\Big[\int_0^{T\wedge\tau_{J}}\|X^{M,h^{\e}}\|_{\mathbb{V}}^{p-4}\nonumber\\
& &\qquad\big(G(u^{0}+\sqrt{\e}\lambda(\e)X^{M,h^{\e}},t),X^{M,h^{\e}}\big)\big(G(u^{0}+\sqrt{\e}\lambda(\e)X^{M,h^{\e}},t),X^{M,h^{\e}}\big)dt\Big]\nonumber\\
&:=&
I_{1}(T)+I_{2}(T)+I_{3}(T)+I_{4}(T)+I_{5}(T)+I_{6}(T).
\end{eqnarray}
By Lemma \ref{Lem-B-01}, we have
\begin{eqnarray}\label{MDP V 01}
I_{1}(T)
&\leq&
C_p\int_0^{T}E\Big[\sup_{s\in[0,t\wedge\tau_{J}]}\|X^{M,h^{\e}}(s)\|_{\mathbb{V}}^p\Big]\|u^{0}(t)\|_{\mathbb{W}}dt.
\end{eqnarray}

By (\ref{F-02}),
\begin{eqnarray}\label{MDP V 02}
I_{2}(T)
&\leq&
\frac{C_p}{\sqrt{\e}\lambda(\e)}E\Big[\int_0^{T\wedge\tau_{J}}\|X^{M,h^{\e}}(t)\|_{\mathbb{V}}^{p-2}\|{\sqrt{\e}\lambda(\e)}X^{M,h^{\e}}(t)\|_{\mathbb{V}}
      \|X^{M,h^{\e}}(t)\|_{\mathbb{V}}dt\Big]\nonumber\\
&\leq&
C_p\int_0^{T}E\Big[\sup_{s\in[0,t\wedge\tau_{J}]}\|X^{M,h^{\e}}(s)\|_{\mathbb{V}}^p\Big]dt.
\end{eqnarray}
Applying (\ref{G-02}) and B-D-G inequality to $I_3$, we get
\begin{eqnarray}\label{MDP V 03}
I_{3}(T)
&\leq&
\frac{C_p}{\lambda(\e)}E\Big[\int_0^{T\wedge\tau_{J}}\|X^{M,h^{\e}}(t)\|_{\mathbb{V}}^{2p-2}
       \|u^{0}(t)+\sqrt{\e}\lambda(\e)X^{M,h^{\e}}(t)\|_{\mathbb{V}}^{2}dt\Big]^{\frac{1}{2}}\nonumber\\
&\leq&
\frac{C_p}{\lambda(\e)}E\Big[\int_0^{T\wedge\tau_{J}}\|X^{M,h^{\e}}(t)\|_{\mathbb{V}}^{2p-2}\|u^{0}(t)\|_{\mathbb{V}}^{2}dt]^{\frac{1}{2}}\nonumber\\
& &\quad+C_p\sqrt{\varepsilon}E[\int_0^{T\wedge\tau_{J}}\|X^{M,h^{\e}}(t)\|_{\mathbb{V}}^{2p}dt\Big]^{\frac{1}{2}}\nonumber\\
&\leq&
\frac{C_p}{\lambda(\e)}E\Big[\sup_{t\in[0,T\wedge\tau_{J}]}\|X^{M,h^{\e}}(t)\|_{\mathbb{V}}^{p-1}
      \big(\int_0^{T\wedge\tau_{J}}\|u^{0}(t)\|_{\mathbb{V}}^{2}dt\big)^{\frac{1}{2}}\Big]\nonumber\\
& &\quad+C_p\sqrt{\e}E\Big[\sup_{t\in[0,T\wedge\tau_{J}]}\|X^{M,h^{\e}}(t)\|_{\mathbb{V}}^{\frac{p}{2}}
      \big(\int_0^{T\wedge\tau_{J}}\|X^{M,h^{\e}}(t)\|_{\mathbb{V}}^{p}dt\big)^{\frac{1}{2}}\Big]\nonumber\\
&\leq&
\eta_{1}E\Big[\sup_{t\in[0,T\wedge\tau_{J}]}\|X^{M,h^{\e}}(t)\|_{\mathbb{V}}^{p}\Big]
      +\frac{C_{p,\eta_1}}{\lambda(\e)}E\Big[\int_0^{T\wedge\tau_{J}}\|u^{0}(t)\|_{\mathbb{V}}^{2}dt\Big]^{\frac{p}{2}}\nonumber\\
& &\quad+\eta_{2}E\Big[\sup_{t\in[0,T\wedge\tau_{J}]}\|X^{M,h^{\e}}(t)\|_{\mathbb{V}}^{p}\Big]
      +C_{p,\eta_2}\varepsilon E\Big[\int_0^{T\wedge\tau_{J}}\|X^{M,h^{\e}}(t)\|_{\mathbb{V}}^{p}dt\Big]\nonumber\\
&\leq&
(\eta_{1}+\eta_{2})E\Big[\sup_{t\in[0,T\wedge\tau_{J}]}\|X^{M,h^{\e}}(t)\|_{\mathbb{V}}^{p}\Big]
+C_{p,\eta_2}\varepsilon\int_0^{T}E\Big[\sup_{s\in[0,t\wedge\tau_{J}]}\|X^{M,h^{\e}}(s)\|_{\mathbb{V}}^p\Big]dt\nonumber\\
& &\quad+\frac{C_{p,\eta_1,T}}{\lambda^p(\varepsilon)}\sup_{t\in[0,T]}\|u^{0}(t)\|_{\mathbb{V}}^{p},
\end{eqnarray}
where $\eta_1,\eta_2$ are positive constants chosen later. $I_4$ and $I_6$ can be bounded as follows:
\begin{eqnarray}\label{MDP V 04}
I_{4}(T)
&\leq&
C_pE\Big[\int_0^{T\wedge\tau_{J}}\|X^{M,h^{\e}}(t)\|_{\mathbb{V}}^{p-1}\|u^{0}(t)+\sqrt{\e}\lambda(\e)X^{M,h^{\e}}(t)\|_{\mathbb{V}}
      \|\dot{h}^{\e}(t)\|_{{\mathbb{R}}^m}dt\Big]\nonumber\\
&\leq&
C_pE\Big[\sup_{t\in[0,T\wedge\tau_{J}]}\|X^{M,h^{\e}}(t)\|_{\mathbb{V}}^{p-1}
      \int_0^{T\wedge\tau_{J}}\|u^{0}(t)+\sqrt{\e}\lambda(\e)X^{M,h^{\e}}(t)\|_{\mathbb{V}}\|\dot{h}^{\e}(t)\|_{{\mathbb{R}}^m}dt\Big]\nonumber\\
&\leq&
\delta E\Big[\sup_{t\in[0,T\wedge\tau_{J}]}\|X^{M,h^{\e}}(t)\|_{\mathbb{V}}^{p}\Big]
+C_{p,\delta}E\Big[\int_0^{T\wedge\tau_{J}}\|u^{0}(t)+\sqrt{\e}\lambda(\e)X^{M,h^{\e}}(t)\|_{\mathbb{V}}\|\dot{h}^{\e}(t)\|_{{\mathbb{R}}^m}dt\Big]^p\nonumber\\
&\leq&
\delta E\Big[\sup_{t\in[0,T\wedge\tau_{J}]}\|X^{M,h^{\e}}(t)\|_{\mathbb{V}}^{p}\Big]
+C_{p,\delta}N^{p/2}E\Big[\int_0^{T\wedge\tau_{J}}\|u^{0}(t)+\sqrt{\e}\lambda(\e)X^{M,h^{\e}}(t)\|_{\mathbb{V}}^2dt\Big]^{p/2}\nonumber\\
&\leq&
\delta E\Big[\sup_{t\in[0,T\wedge\tau_{J}]}\|X^{M,h^{\e}}(t)\|_{\mathbb{V}}^{p}\Big]
+C_{p,\delta,N}\varepsilon^{\frac{p}{2}}\lambda^p(\varepsilon)\int_0^{T}E\Big[\sup_{s\in[0,t\wedge\tau_{J}]}\|X^{M,h^{\e}}(s)\|_{\mathbb{V}}^p\Big]dt\nonumber\\
& &\quad+C_{p,\delta,N,T}\sup_{t\in[0,T]}\|u^{0}(t)\|_{\mathbb{V}}^{p},
\end{eqnarray}
where $\delta$ is a positive constant.
And
\begin{eqnarray}\label{MDP V 05}
I_{6}(T)
&\leq&
\frac{C_p}{\lambda^2(\e)}E\Big[\int_0^{T\wedge\tau_{J}}\|X^{M,h^{\e}}(t)\|_{\mathbb{V}}^{p-2}
      \|u^{0}(t)+\sqrt{\e}\lambda(\e)X^{M,h^{\e}}(t)\|_{\mathbb{V}}^{2}dt\Big]\nonumber\\
&\leq&
C_p\varepsilon E\Big[\int_0^{T\wedge\tau_{J}}\|X^{M,h^{\e}}(t)\|_{\mathbb{V}}^{p}dt\Big]
      +\frac{C_p}{\lambda^2(\e)}E\Big[\int_0^{T\wedge\tau_{J}}\|X^{M,h^{\e}}(t)\|_{\mathbb{V}}^{p-2}\|u^{0}(t)\|_{\mathbb{V}}^{2}dt\Big]\nonumber\\
&\leq&
C_p(\varepsilon+1)\int_0^{T}E\Big[\sup_{s\in[0,t\wedge\tau_{J}]}\|X^{M,h^{\e}}(s)\|_{\mathbb{V}}^p\Big]dt
      +\frac{C_{p,T}}{\lambda^p(\e)}\sup_{t\in[0,T]}\|u^{0}(t)\|_{\mathbb{V}}^{p}.
\end{eqnarray}
Now we estimate $I_5(T)$. By Lemma \ref{Lem GS}, there exists a unique solution $\widetilde{G}^{\e}(t)\in{\mathbb{W}}^{\otimes m}$ of the following equation:
\begin{eqnarray*}
\widetilde{G}^{\e}(t)-\alpha \Delta \widetilde{G}^{\e}(t)=G(u^{0}(t)+\sqrt{\e}\lambda(\e)X^{M,h^{\e}}(t),t)\ {\rm in}\ \mathcal{O},\nonumber\\
{\rm div}\ \widetilde{G}^{\e}(t)=0\ {\rm in}\ \mathcal{O},\\
\widetilde{G}^{\e}(t)=0\ {\rm on}\ \partial \mathcal{O}.\nonumber
\end{eqnarray*}
Moreover,
$$
(\widetilde{G}^{\e}(t),e_i)_\mathbb{V}=(G(u^{0}(t)+\sqrt{\e}\lambda(\e)X^{M,h^{\e}}(t),t),e_i),\ \ \forall i\in\{1,2,\cdots,M\},
$$
and there exists a positive constant $C_0$ such that
$$
\|\widetilde{G}^{\e}(t)\|_{\mathbb{W}^{\otimes m}}
\leq
C_0\|G(u^{0}(t)+\sqrt{\e}\lambda(\e)X^{M,h^{\e}}(t),t)\|_{\mathbb{V}^{\otimes m}}.
$$
Hence by (\ref{Basis}), (\ref{G-01}), (\ref{G-02}) and  $0<\lambda_i\leq\lambda_{i+1},\ i=1,2,\cdots$,
\begin{eqnarray}\label{MDP V 06*}
& &\sum_{i=1}^M\lambda_i(G(u^{0}(s)+\sqrt{\e}\lambda(\e)X^{M,h^{\e}}(s),s),e_i)(G(u^{0}(s)+\sqrt{\e}\lambda(\e)X^{M,h^{\e}}(s),s),e_i)'\nonumber\\
&=&
\sum_{i=1}^M\lambda_i(\widetilde{G}^{\e}(s),e_i)_\mathbb{V}
      (\widetilde{G}^{\e}(s),e_i)_\mathbb{V}'\nonumber\\
&=&
\sum_{i=1}^M\frac{1}{\lambda_i}(\widetilde{G}^{\e}(s),s),e_i)_\mathbb{W}
      (\widetilde{G}^{\e}(s),s),e_i)_\mathbb{W}'\nonumber\\
&\leq&
\frac{1}{\lambda_1}\|\widetilde{G}^{\e}(s),s)\|^2_{\mathbb{W}^{\otimes m}}\nonumber\\
&\leq&
\frac{C^2_0}{\lambda_1}\|G(u^{0}(s)+\sqrt{\e}\lambda(\e)X^{M,h^{\e}}(s),s)\|^2_{\mathbb{V}^{\otimes m}}\nonumber\\
&\leq&
C\|u^{0}(s)+\sqrt{\e}\lambda(\e)X^{M,h^{\e}}(s)\|^2_\mathbb{V}.
\end{eqnarray}
Hence
\begin{eqnarray}\label{MDP V 06}
I_{5}(T)
&\leq&
\frac{C_p}{\lambda^2(\e)}E\Big[\int_0^{T\wedge\tau_{J}}\|X^{M,h^{\e}}\|_{\mathbb{V}}^{p-2}\|u^{0}(t)+\sqrt{\e}\lambda(\e)X^{M,h^{\e}}(t)\|^2_\mathbb{V}dt\Big]\nonumber\\
&\leq&
C_p(\varepsilon+1)\int_0^{T}E\Big[\sup_{s\in[0,t\wedge\tau_{J}]}\|X^{M,h^{\e}}(s)\|_{\mathbb{V}}^p\Big]dt
      +\frac{C_{p,T}}{\lambda^p(\e)}\sup_{t\in[0,T]}\|u^{0}(t)\|_{\mathbb{V}}^{p}.
\end{eqnarray}
Combining (\ref{MDP V 00})--(\ref{MDP V 05}) and (\ref{MDP V 06}), we get
\begin{eqnarray}\label{MDP V 08}
& &(1-\delta-\eta_{1}-\eta_{2})E\Big[\sup_{t\in[0,T\wedge\tau_{J}]}\|X^{M,h^{\e}}(t)\|_{\mathbb{V}}^{p}\Big]
+\nu pE\Big[\int_0^{T\wedge\tau_{J}}\|X^{M,h^{\e}}\|_{\mathbb{V}}^{p-2}\|X^{M,h^{\e}}\|^2dt\Big]\nonumber\\
&\leq&
C_{p,\delta,\eta_1,\eta_2,T}\int_0^{T}E\Big[\sup_{s\in[0,t\wedge\tau_{J}]}\|X^{M,h^{\e}}(s)\|_{\mathbb{V}}^p\Big]\big(\|u^{0}(t)\|_{\mathbb{W}}+1\big)dt\nonumber\\
& &+C_{p,N,\delta,\eta_1,\eta_2,T}\sup_{t\in[0,T]}\|u^{0}(t)\|_{\mathbb{V}}^{p},\forall \e\in(0,1).
\end{eqnarray}
Choose $\delta=\eta_{1}=\eta_{2}=\frac{1}{4}.$ By Gronwall's Inequality, there exists a $\varepsilon_0>0$, such that
$$\sup_{\varepsilon\in(0,\varepsilon_0)}E[\sup_{t\in[0,T\wedge\tau_{J}]}\|X^{M,h^{\e}}(t)\|_{\mathbb{V}}^{p}]\leq C_{p,N},$$
which is (\ref{MDP Estimation 02}).
\vskip 0.2cm
Next we prove (\ref{MDP Estimation 03}). To this end, we need to establish an estimate for $\|X^{M,h^{\e}}(t)\|_{\ast}^{p}$.\\
Setting
\begin{eqnarray*}
& &\phi(u^{0},X^{M,h^{\e}})\nonumber\\
&=&-\nu\Delta X^{M,h^{\e}}+curl(X^{M,h^{\e}}-\alpha\Delta X^{M,h^{\e}})\times(u^{0}+\sqrt{\e}\lambda(\e)X^{M,h^{\e}})\nonumber\\
& &+curl(u^{0}-\alpha\Delta u^{0})\times X^{M,h^{\e}}
-\frac{1}{\sqrt{\e}\lambda(\e)}\big(F(u^{0}+\sqrt{\e}\lambda(\e)X^{M,h^{\e}},t)-F(u^{0},t)\big)\nonumber\\
& &-G(u^{0}+\sqrt{\e}\lambda(\e)X^{M,h^{\e}},t)\dot{h}^{\e}(t),
\end{eqnarray*}
(\ref{MDP 04}) becomes
\begin{eqnarray*}
d(X^{M,h^{\e}},e_i)_{\mathbb{V}}+(\phi(u^{M,0},X^{M,h^{\e}}),e_i)dt
=\frac{1}{\lambda(\e)}\big(G(u^{0}+\sqrt{\e}\lambda(\e)X^{M,h^{\e}},t),e_i\big)dW(t).
\end{eqnarray*}
Since $e_i\in\mathbb{H}^4(\mathcal{O}),\ i\in\mathbb{N}$ (see Lemma \ref{eq Basis}), $X^{M,h^{\e}}\in\mathbb{W}_M$, and $u^0 \in \mathbb{H}^4(\mathcal{O})$ (see Lemma \ref{Regularity}), we see that
 $\phi(u^{0},X^{M,h^{\e}})\in\mathbb{H}^1(\mathcal{O})$. By Lemma \ref{Lem GS}, there exists a unique $v^M\in\mathbb{W}$ satisfying
\begin{eqnarray*}
v^M-\alpha \Delta v^M=\phi(u^{0},X^{M,h^{\e}})\ {\rm in}\ \mathcal{O},\nonumber\\
{\rm div}\ v^M=0\ {\rm in}\ \mathcal{O},\\
v^M=0\ {\rm on}\ \partial \mathcal{O}.\nonumber
\end{eqnarray*}
Moreover,
$$
(v^M,e_i)_\mathbb{V}=(\phi(u^{0},X^{M,h^{\e}}),e_i),\ \ \forall i\in\{1,2,\cdots,M\}.
$$
Thus
\begin{eqnarray}\label{eq 12}
d(X^{M,h^{\e}},e_i)_\mathbb{V}+(v^M,e_i)_\mathbb{V}dt=\frac{1}{\lambda(\e)}\big(G(u^{0}+\sqrt{\e}\lambda(\e)X^{M,h^{\e}},t),e_i\big)dW(t).
\end{eqnarray}

Let $\widetilde{G}$ be defined as before so that
$$
\lambda_i(G(u^{0}+\sqrt{\e}\lambda(\e)X^{M,h^{\e}},t),e_i)=(\widetilde{G}^{\e}(t),e_i)_{\mathbb{W}}.
$$
Multiplying (\ref{eq 12}) by $\lambda_i$ and noticing (\ref{Basis}), we get
$$
d(X^{M,h^{\e}},e_i)_\mathbb{W}+(v^M,e_i)_\mathbb{W}dt=\frac{1}{\lambda(\e)}(\widetilde{G}^{\e}(t),e_i)_{\mathbb{W}}dW(t).
$$

Applying ${\rm It\hat{o}}$'s formula,
\begin{eqnarray*}
& &d(X^{M,h^{\e}},e_i)_\mathbb{W}^2+2(X^{M,h^{\e}},e_i)_\mathbb{W}(v^M,e_i)_\mathbb{W}dt\nonumber\\
&=&
\frac{2}{\lambda(\e)}(X^{M,h^{\e}},e_i)_\mathbb{W}(\widetilde{G}^{\e}(t),e_i)_{\mathbb{W}}dW(t)
+\frac{1}{\lambda^2(\e)}(\widetilde{G}^{\e}(t),e_i)_{\mathbb{W}}(\widetilde{G}^{\e}(t),e_i)'_{\mathbb{W}}dt.
\end{eqnarray*}
Taking summation from $i=1$ to $i=M$, we obtain
\begin{eqnarray*}
& &d\|X^{M,h^{\e}}(t)\|_\mathbb{W}^2+2(X^{M,h^{\e}},v^M)_\mathbb{W}dt\nonumber\\
&=&
\frac{2}{\lambda(\e)}\big(X^{M,h^{\e}},\widetilde{G}^{\e}(t)\big)_\mathbb{W}dW(t)
+\frac{1}{\lambda^2(\e)}\sum_{i=1}^{M}\big(\widetilde{G}^{\e}(t),e_i\big)_{\mathbb{W}}\big(\widetilde{G}^{\e}(t),e_i\big)'_{\mathbb{W}}dt.
\end{eqnarray*}
In view of (\ref{W}) and (\ref{Basis}), we rewrite the above equation as follows
\begin{eqnarray*}
& &d\big[\|X^{M,h^{\e}}\|_\mathbb{V}^2+\|X^{M,h^{\e}}\|_{\ast}^2\big]+
2\big[(X^{M,h^{\e}},v^M)_\mathbb{V}+\big(curl(X^{M,h^{\e}}-\alpha\Delta X^{M,h^{\e}}),curl(v^M-\alpha\Delta v^M)\big)\big]dt\nonumber\\
&=&
\frac{1}{\lambda^2(\e)}\sum_{i=1}^{M}\lambda_{i}^2\big(\widetilde{G}^{\e}(t),e_i\big)_{\mathbb{V}}
      \big(\widetilde{G}^{\e}(t),e_i\big)'_{\mathbb{V}}dt
+\frac{2}{\lambda(\e)}\big(X^{M,h^{\e}},\widetilde{G}^{\e}(t)\big)_\mathbb{V}dW(t)\nonumber\\
& &+\frac{2}{\lambda(\e)}\Big(curl(X^{M,h^{\e}}-\alpha\Delta X^{M,h^{\e}}),curl\big(\widetilde{G}^{\e}(t)-\alpha\Delta\widetilde{G}^{\e}(t)\big)\Big)dW(t).
\end{eqnarray*}
By definition of $v^M$ and $\widetilde{G}^{\e}(t)$, it follows that
\begin{eqnarray*}
& &d\big[\|X^{M,h^{\e}}\|_\mathbb{V}^2+\|X^{M,h^{\e}}\|_{\ast}^2\big]+2\big[(X^{M,h^{\e}},\phi(u^{0},X^{M,h^{\e}}))\nonumber\\
& &\qquad+\big(curl(X^{M,h^{\e}}-\alpha\Delta X^{M,h^{\e}}),curl(\phi(u^{0},X^{M,h^{\e}}))\big)\big]dt\nonumber\\
&=&
\frac{1}{\lambda^2(\e)}\sum_{i=1}^{M}\lambda_{i}^2\big(G(u^{0}+\sqrt{\e}\lambda(\e)X^{M,h^{\e}},t),e_i\big)
      \big(G(u^{0}+\sqrt{\e}\lambda(\e)X^{M,h^{\e}},t),e_i\big)'dt\nonumber\\
& &+\frac{2}{\lambda(\e)}\big(X^{M,h^{\e}},G(u^{0}+\sqrt{\e}\lambda(\e)X^{M,h^{\e}},t)\big)dW(t)\nonumber\\
& &+\frac{2}{\lambda(\e)}\Big(curl(X^{M,h^{\e}}-\alpha\Delta X^{M,h^{\e}}),
curl\big(G(u^{0}+\sqrt{\e}\lambda(\e)X^{M,h^{\e}},t)\big)\Big)dW(t).
\end{eqnarray*}
Subtracting (\ref{MDP ito 1}) from the above equation, we obtain
\begin{eqnarray}\label{MDP ito 2}
& &d\|X^{M,h^{\e}}\|_{\ast}^2+2\Big(curl(X^{M,h^{\e}}-\alpha\Delta X^{M,h^{\e}}),curl\big(\phi(u^{0},X^{M,h^{\e}})\big)\Big)dt\nonumber\\
&=&
\frac{1}{\lambda^2(\e)}\sum_{i=1}^{M}(\lambda_{i}^2-\lambda_{i})(G(u^{0}+\sqrt{\e}\lambda(\e)X^{M,h^{\e}},t),e_i)
(G(u^{M,0}+\sqrt{\e}\lambda(\e)X^{M,h^{\e}},t),e_i)'dt\nonumber\\
& &+\frac{2}{\lambda(\e)}\Big(curl(X^{M,h^{\e}}-\alpha\Delta X^{M,h^{\e}}),curl\big(G(u^{0}+\sqrt{\e}\lambda(\e)X^{M,h^{\e}},t)\big)\Big)dW(t).
\end{eqnarray}
A simple calculation gives that
$$\Big(curl(X^{M,h^{\e}}-\alpha\Delta X^{M,h^{\e}}),curl\big(curl(X^{M,h^{\e}}-\alpha\Delta X^{M,h^{\e}})\times(u^{0}+\sqrt{\e}\lambda(\e)X^{M,h^{\e}})\big)\Big)=0,$$
and
$$curl\big(curl(u^{0}-\alpha\Delta u^{0})\times X^{M,h^{\e}}\big)=-\Delta(u^{0}-\alpha\Delta u^{0})\times X^{M,h^{\e}}.$$
Hence,
\begin{eqnarray}\label{MDP curl 01}
& &\Big(curl(X^{M,h^{\e}}-\alpha\Delta X^{M,h^{\e}}),curl\big(\phi(u^{0},X^{M,h^{\e}})\big)\Big)\nonumber\\
&=&
\Big(curl\big(X^{M,h^{\e}}-\alpha\Delta X^{M,h^{\e}}\big),curl\big(-\nu\Delta X^{M,h^{\e}}\big)\Big)\nonumber\\
& &+\Big(curl\big(X^{M,h^{\e}}-\alpha\Delta X^{M,h^{\e}}\big),curl\big(curl(u^{0}-\alpha\Delta u^{0})\times X^{M,h^{\e}}\big)\Big)\nonumber\\
& &-\Big(curl(X^{M,h^{\e}}-\alpha\Delta X^{M,h^{\e}}),
      curl\big(\frac{1}{\sqrt{\e}\lambda(\e)}\big[F(u^{0}+\sqrt{\e}\lambda(\e)X^{M,h^{\e}},t)-F(u^{0},t)\big]\big)\Big)\nonumber\\
& &-\Big(curl(X^{M,h^{\e}}-\alpha\Delta X^{M,h^{\e}}),curl\big(G(u^{0}+\sqrt{\e}\lambda(\e)X^{M,h^{\e}},t)\dot{h}^{\e}(t)\big)\Big)\nonumber\\
&=&
\frac{\nu}{\alpha}\|X^{M,h^{\e}}\|_{\ast}^2-\frac{\nu}{\alpha}\Big(curl\big(X^{M,h^{\e}}-\alpha\Delta X^{M,h^{\e}}\big),curl\big(X^{M,h^{\e}}\big)\Big)\nonumber\\
& &+\Big(curl\big(X^{M,h^{\e}}-\alpha\Delta X^{M,h^{\e}}\big),-\Delta\big(u^{0}-\alpha\Delta u^{0}\big)\times X^{M,h^{\e}}\Big)\nonumber\\
& &-\Big(curl\big(X^{M,h^{\e}}-\alpha\Delta X^{M,h^{\e}}\big),curl\big(\frac{1}{\sqrt{\e}\lambda(\e)}\big[F(u^{0}+\sqrt{\e}\lambda(\e)X^{M,h^{\e}},t)-F(u^{0},t)\big]\big)\Big)\nonumber\\
& &-\Big(curl(X^{M,h^{\e}}-\alpha\Delta X^{M,h^{\e}}),curl\big(G(u^{0}+\sqrt{\e}\lambda(\e)X^{M,h^{\e}},t)\dot{h}^{\e}(t)\big)\Big).
\end{eqnarray}
Combining (\ref{MDP ito 2}) with (\ref{MDP curl 01}) yields that
\begin{eqnarray*}
& &d\|X^{M,h^{\e}}\|_{\ast}^2+\frac{2\nu}{\alpha}\|X^{M,h^{\e}}\|_{\ast}^2dt-\frac{2\nu}{\alpha}\Big(curl\big(X^{M,h^{\e}}-\alpha\Delta X^{M,h^{\e}}\big),curl\big(X^{M,h^{\e}}\big)\Big)dt\nonumber\\
& &+2\Big(curl\big(X^{M,h^{\e}}-\alpha\Delta X^{M,h^{\e}}\big),-\Delta\big(u^{0}-\alpha\Delta u^{0}\big)\times X^{M,h^{\e}}\Big)dt\nonumber\\
& &-2\Big(curl\big(X^{M,h^{\e}}-\alpha\Delta X^{M,h^{\e}}\big),curl\big(\frac{1}{\sqrt{\e}\lambda(\e)}\big[F(u^{0}+\sqrt{\e}\lambda(\e)X^{M,h^{\e}},t)-F(u^{0},t)\big]\big)\Big)dt\nonumber\\
& &-2\Big(curl\big(X^{M,h^{\e}}-\alpha\Delta X^{M,h^{\e}}\big),curl\big(G(u^{M,0}+\sqrt{\e}\lambda(\e)X^{M,h^{\e}},t)\dot{h}^{\e}(t)\big)\Big)dt\nonumber\\
&=&
\frac{1}{\lambda^2(\e)}\sum_{i=1}^{M}(\lambda_{i}^2-\lambda_{i})(G(u^{0}+\sqrt{\e}\lambda(\e)X^{M,h^{\e}},t),e_i)
(G(u^{0}+\sqrt{\e}\lambda(\e)X^{M,h^{\e}},t),e_i)'dt\nonumber\\
& &+\frac{2}{\lambda(\e)}\Big(curl\big(X^{M,h^{\e}}-\alpha\Delta X^{M,h^{\e}}\big),curl\big(G(u^{0}+\sqrt{\e}\lambda(\e)X^{M,h^{\e}},t)\big)\Big)dW(t).
\end{eqnarray*}
Applying ${\rm It\hat{o}}$'s formula to $\|X^{M,h^{\e}}\|_{\ast}^p$, $p\geq4$,
\begin{eqnarray*}
& &d\|X^{M,h^{\e}}\|_{\ast}^p\nonumber\\
&=&
p\|X^{M,h^{\e}}\|_{\ast}^{p-2}\Big\{-\frac{\nu}{\alpha}\|X^{M,h^{\e}}\|_{\ast}^2dt
+\frac{\nu}{\alpha}\Big(curl\big(X^{M,h^{\e}}-\alpha\Delta X^{M,h^{\e}}\big),curl\big(X^{M,h^{\e}}\big)\Big)dt\nonumber\\
& &+\Big(curl\big(X^{M,h^{\e}}-\alpha\Delta X^{M,h^{\e}}\big),\Delta\big(u^{0}-\alpha\Delta u^{0}\big)\times X^{M,h^{\e}}\Big)dt\nonumber\\
& &+\Big(curl\big(X^{M,h^{\e}}-\alpha\Delta X^{M,h^{\e}}\big),curl\big(\frac{1}{\sqrt{\e}\lambda(\e)}\big[F(u^{0}+\sqrt{\e}\lambda(\e)X^{M,h^{\e}},t)-F(u^{0},t)\big]\big)\Big)dt\nonumber\\
& &+\Big(curl\big(X^{M,h^{\e}}-\alpha\Delta X^{M,h^{\e}}\big),curl\big(G(u^{0}+\sqrt{\e}\lambda(\e)X^{M,h^{\e}},t)\dot{h}^{\e}(t)\big)\Big)dt\nonumber\\
& &+\frac{1}{2\lambda^2(\e)}\sum_{i=1}^{M}(\lambda_{i}^2-\lambda_{i})(G(u^{0}+\sqrt{\e}\lambda(\e)X^{M,h^{\e}},t),e_i)
(G(u^{0}+\sqrt{\e}\lambda(\e)X^{M,h^{\e}},t),e_i)'dt\nonumber\\
& &+\frac{1}{\lambda(\e)}\Big(curl\big(X^{M,h^{\e}}-\alpha\Delta X^{M,h^{\e}}\big),curl\big(G(u^{0}+\sqrt{\e}\lambda(\e)X^{M,h^{\e}},t)\big)\Big)dW(t)\Big\}\nonumber\\
& &+\frac{p}{\lambda^{2}(\e)}(\frac{p}{2}-1)\|X^{M,h^{\e}}\|_{\ast}^{p-4}\Big(curl\big(X^{M,h^{\e}}-\alpha\Delta X^{M,h^{\e}}\big),curl\big(G(u^{0}+\sqrt{\e}\lambda(\e)X^{M,h^{\e}},t)\big)\Big)\nonumber\\
& &\quad\cdot\Big(curl\big(X^{M,h^{\e}}-\alpha\Delta X^{M,h^{\e}}\big),curl\big(G(u^{0}+\sqrt{\e}\lambda(\e)X^{M,h^{\e}},t)\big)\Big)'dt.
\end{eqnarray*}
Integrate from $0$ to $t$, take sup over the interval $[0,T\wedge\tau_{J}]$ and take expectation to get
\begin{eqnarray}\label{MDP * 00}
& &E\Big[\sup_{t\in[0,T\wedge\tau_{J}]}\|X^{M,h^{\e}}(t)\|_{\ast}^p\Big]+\frac{\nu p}{\alpha}
E\Big[\int_0^{T\wedge\tau_{J}}\|X^{M,h^{\e}}(t)\|_{\ast}^{p}dt\Big]\nonumber\\
&\leq&
\frac{\nu p}{\alpha}E\Big[\int_0^{T\wedge\tau_{J}}\|X^{M,h^{\e}}(t)\|_{\ast}^{p-2}\Big|\Big(curl(X^{M,h^{\e}}(t)-\alpha\Delta X^{M,h^{\e}}(t)),curl(X^{M,h^{\e}}(t))\Big)\Big|dt\Big]\nonumber\\
& &+pE\Big[\int_0^{T\wedge\tau_{J}}\|X^{M,h^{\e}}(t)\|_{\ast}^{p-2}\nonumber\\
      & &\quad\cdot\Big|\Big(curl\big(X^{M,h^{\e}}(t)-\alpha\Delta X^{M,h^{\e}}(t)\big),\Delta\big(u^{0}(t)-\alpha\Delta u^{0}(t)\big)\times X^{M,h^{\e}}(t)\Big)\Big|dt\Big]\nonumber\\
& &+\frac{p}{\sqrt{\e}\lambda(\e)}E\Big[\int_0^{T\wedge\tau_{J}}\|X^{M,h^{\e}}(t)\|_{\ast}^{p-2}\Big|\Big(curl\big(X^{M,h^{\e}}(t)-\alpha\Delta X^{M,h^{\e}}(t)\big),\nonumber\\
      & &\qquad\qquad curl\big(\big[F(u^{0}(t)+\sqrt{\e}\lambda(\e)X^{M,h^{\e}}(t),t)-F(u^{0}(t),t)\big]\big)\Big)\Big|dt\Big]\nonumber\\
& &+pE\Big[\int_0^{T\wedge\tau_{J}}\|X^{M,h^{\e}}(t)\|_{\ast}^{p-2}\nonumber\\
      & &\quad\Big|\Big(curl\big(X^{M,h^{\e}}(t)-\alpha\Delta X^{M,h^{\e}}(t)\big),curl\big(G(u^{0}(t)+\sqrt{\e}\lambda(\e)X^{M,h^{\e}}(t),t)\dot{h}^{\e}(t)\big)\Big)\Big|dt]\nonumber\\
& &+\frac{p}{2\lambda^2(\e)}E\Big[\int_0^{T\wedge\tau_{J}}\|X^{M,h^{\e}}(t)\|_{\ast}^{p-2}\sum_{i=1}^{M}(\lambda_{i}^2-\lambda_{i})\nonumber\\
      & &\quad(G(u^{0}(t)+\sqrt{\e}\lambda(\e)X^{M,h^{\e}}(t),t),e_i)(G(u^{0}(t)+\sqrt{\e}\lambda(\e)X^{M,h^{\e}}(t),t),e_i)'dt\nonumber\\
& &+\frac{p}{\lambda(\e)}E\Big[\sup_{t\in[0,T\wedge\tau_{J}]}\Big|\int_0^{T\wedge\tau_{J}}\|X^{M,h^{\e}}(s)\|_{\ast}^{p-2}\nonumber\\
      & &\quad\Big(curl\big(X^{M,h^{\e}}(s)-\alpha\Delta X^{M,h^{\e}}(s)\big),curl\big(G(u^{0}(s)+\sqrt{\e}\lambda(\e)X^{M,h^{\e}}(s),s)\big)\Big)dW(s)\Big|\Big]\nonumber\\
& &+\frac{p}{\lambda^{2}(\e)}(\frac{p}{2}-1)E\Big[\int_0^{T\wedge\tau_{J}}\|X^{M,h^{\e}}(t)\|_{\ast}^{p-4}\nonumber\\
& &\quad\Big(curl\big(X^{M,h^{\e}}(t)-\alpha\Delta X^{M,h^{\e}}(t)\big),curl\big(G(u^{0}(t)+\sqrt{\e}\lambda(\e)X^{M,h^{\e}}(t),t)\big)\Big)\nonumber\\
& &\quad\Big(curl\big(X^{M,h^{\e}}(t)-\alpha\Delta X^{M,h^{\e}}(t)\big),curl\big(G(u^{0}(t)+\sqrt{\e}\lambda(\e)X^{M,h^{\e}}(t),t)\big)\Big)'dt\Big]\nonumber\\
&:=&J_{1}(T)+J_{2}(T)+J_{3}(T)+J_{4}(T)+J_{5}(T)+J_{6}(T)+J_{7}(T).
\end{eqnarray}
Applying Cauchy-Schwardz, Young inequalities, and noticing the fact
\begin{equation}\label{curl}
|curl(v)|^2\leq\frac{2}{\alpha}\|v\|_{\mathbb{V}}^2 \ \text{ for any } v\in\mathbb{W},
\end{equation}
we have
\begin{eqnarray}\label{MDP * 01}
J_{1}(T)
&\leq&
C_pE\Big[\int_0^{T\wedge\tau_{J}}\|X^{M,h^{\e}}(t)\|_{\ast}^{p-2}\|X^{M,h^{\e}}(t)\|_{\ast}\|X^{M,h^{\e}}(t)\|_{\mathbb{V}}dt\Big]\nonumber\\
&\leq&
C_p\int_0^{T}E\Big[\sup_{s\in[0,t\wedge\tau_{J}]}\|X^{M,h^{\e}}(s)\|_{\ast}^{p}\Big]dt+C_{p,T}E\Big[\sup_{t\in[0,T]}\|X^{M,h^{\e}}(t)\|_{\mathbb{V}}^{p}\Big].
\end{eqnarray}
By Lemma \ref{Regularity}, Cauchy-Schwardz, Young inequalities, we have
\begin{eqnarray}\label{MDP * 02}
J_{2}(T)
&\leq&
C_pE\Big[\int_0^{T\wedge\tau_{J}}\|X^{M,h^{\e}}(t)\|_{\ast}^{p-1}|\Delta(u^{0}(t)-\alpha\Delta u^{0}(t))\times X^{M,h^{\e}}(t)|dt\Big]\nonumber\\
&\leq&
C_pE\Big[\int_0^{T\wedge\tau_{J}}\|X^{M,h^{\e}}(t)\|_{\ast}^{p-1}|\Delta(u^{0}(t)-\alpha\Delta u^{0}(t))||X^{M,h^{\e}}(t)|_{L^{\infty}}dt\Big]\nonumber\\
&\leq&
C_pE\Big[\int_0^{T\wedge\tau_{J}}\|X^{M,h^{\e}}(t)\|_{\ast}^{p-1}\|u^{0}(t)\|_{\mathbb{H}^4}
(\|X^{M,h^{\e}}(t)\|_{\mathbb{V}}+\|X^{M,h^{\e}}(t)\|_{\ast})dt\Big]\nonumber\\
&\leq&
C_pE\Big[\int_0^{T\wedge\tau_{J}}\|X^{M,h^{\e}}(t)\|_{\ast}^{p}dt\Big]
+C_pE\Big[\sup_{t\in[0,T\wedge\tau_{J}]}\|X^{M,h^{\e}}(t)\|_{\ast}^{p-1}\int_0^{T\wedge\tau_{J}}\|X^{M,h^{\e}}(t)\|_{\mathbb{V}}dt\Big]\nonumber\\
&\leq&
C_p\int_0^{T}E\Big[\sup_{s\in[0,t\wedge\tau_{J}]}\|X^{M,h^{\e}}(s)\|_{\ast}^{p}\Big]dt
+\delta E\Big[\sup_{t\in[0,T\wedge\tau_{J}]}\|X^{M,h^{\e}}(t)\|_{\ast}^p\Big]\nonumber\\
& &\quad+C_{p,\delta,T}E\Big[\sup_{t\in[0,T]}\|X^{M,h^{\e}}(t)\|_{\mathbb{V}}^{p}\Big],
\end{eqnarray}
where $\delta$ is a positive constant.\\
By (\ref{F-01})--(\ref{G-02}) and (\ref{curl}), and using Cauchy-Schwardz and Young inequalities again,
\begin{eqnarray}\label{MDP * 03}
J_{3}(T)
&\leq&
C_pE\Big[\int_0^{T\wedge\tau_{J}}\|X^{M,h^{\e}}(t)\|_{\ast}^{p-1}\|X^{M,h^{\e}}(t)\|_{\mathbb{V}}dt\Big]\nonumber\\
&\leq&
C_p\int_0^{T}E[\sup_{s\in[0,t\wedge\tau_{J}]}\|X^{M,h^{\e}}(s)\|_{\ast}^{p}]dt+C_{p,T}E\Big[\sup_{t\in[0,T]}\|X^{M,h^{\e}}(t)\|_{\mathbb{V}}^{p}\Big],
\end{eqnarray}
and
\begin{eqnarray}\label{MDP * 04}
J_{4}(T)
&\leq&
C_pE\Big[\int_0^{T\wedge\tau_{J}}\|X^{M,h^{\e}}(t)\|_{\ast}^{p-1}
\|u^{0}(t)+\sqrt{\e}\lambda(\e)X^{M,h^{\e}}(t)\|_{\mathbb{V}}\|\dot{h^{\e}}\|_{\mathbb{R}^m}dt\Big]\nonumber\\
&\leq&
\eta E\Big[\sup_{t\in[0,T\wedge\tau_{J}]}\|X^{M,h^{\e}}(t)\|_{\ast}^p\Big]
+C_{p,\eta,N,T}E\Big[\sup_{t\in[0,T]}\|u^{0}(t)+\sqrt{\e}\lambda(\e)X^{M,h^{\e}}(t)\|_{\mathbb{V}}^{p}\Big]\nonumber\\
&\leq&
\eta E\Big[\sup_{t\in[0,T\wedge\tau_{J}]}\|X^{M,h^{\e}}(t)\|_{\ast}^p\Big]\nonumber\\
& &\quad+C_{p,\eta,N,T}E\Big[\sup_{t\in[0,T]}\big(\|u^{0}(t)\|_{\mathbb{V}}^{p}+
\varepsilon^{\frac{p}{2}}\lambda^p(\varepsilon)\|X^{M,h^{\e}}(t)\|_{\mathbb{V}}^{p}\big)\Big].
\end{eqnarray}
Similar argument to (\ref{MDP V 06*}), we have
\begin{eqnarray*}
& &\sum_{i=1}^{M}(\lambda_{i}^2-\lambda_{i})(G(u^{0}(t)+\sqrt{\e}\lambda(\e)X^{M,h^{\e}}(t),t),e_i)
(G(u^{0}(t)+\sqrt{\e}\lambda(\e)X^{M,h^{\e}}(t),t),e_i)'\nonumber\\
&\leq&
C\|u^{0}(t)+\sqrt{\e}\lambda(\e)X^{M,h^{\e}}(t)\|_{\mathbb{V}}^2.
\end{eqnarray*}
Thus
\begin{eqnarray}\label{MDP * 05}
J_{5}(T)
&\leq&
C_pE\Big[\int_0^{T\wedge\tau_{J}}\|X^{M,h^{\e}}(t)\|_{\ast}^{p-2}\|u^{0}(t)+\sqrt{\e}\lambda(\e)X^{M,h^{\e}}(t)\|_{\mathbb{V}}^2dt\Big]\nonumber\\
&\leq&
C_p\int_0^{T}E\Big[\sup_{s\in[0,t\wedge\tau_{J}]}\|X^{M,h^{\e}}(s)\|_{\ast}^{p}\Big]dt\nonumber\\
& &\quad+C_{p,T}E\Big[\sup_{t\in[0,T]}\big(\|u^{0}(t)\|_{\mathbb{V}}^{p}+
\varepsilon^{\frac{p}{2}}\lambda^p(\varepsilon)\|X^{M,h^{\e}}(t)\|_{\mathbb{V}}^{p}\big)\Big].
\end{eqnarray}
Using the B-D-G, Cauchy-Schwardz and Young inequalities and (\ref{G-01}), (\ref{G-02}), (\ref{curl}),
\begin{eqnarray}\label{MDP * 06}
J_{6}(T)
&\leq&
C_pE\Big[\int_0^{T\wedge\tau_{J}}\|X^{M,h^{\e}}(t)\|_{\ast}^{2p-2}\|u^{0}(t)+\sqrt{\e}\lambda(\e)X^{M,h^{\e}}(t)\|_{\mathbb{V}}^2dt\Big]^{\frac{1}{2}}\nonumber\\
&\leq&
C_pE\Big[\sup_{t\in[0,T\wedge\tau_{J}]}\|X^{M,h^{\e}}(t)\|_{\ast}^{p-1}
\big(\int_0^{T\wedge\tau_{J}}\|u^{0}(t)+\sqrt{\e}\lambda(\e)X^{M,h^{\e}}(t)\|_{\mathbb{V}}^2dt\big)^{\frac{1}{2}}\Big]\nonumber\\
&\leq&
\kappa E\Big[\sup_{t\in[0,T\wedge\tau_{J}]}\|X^{M,h^{\e}}(t)\|_{\ast}^p\Big]\nonumber\\
& &\quad+C_{p,\kappa,T}E\Big[\sup_{t\in[0,T]}\big(\|u^{0}(t)\|_{\mathbb{V}}^{p}+
\varepsilon^{\frac{p}{2}}\lambda^p(\varepsilon)\|X^{M,h^{\e}}(t)\|_{\mathbb{V}}^{p}\big)\Big],
\end{eqnarray}
and
\begin{eqnarray}\label{MDP * 07}
J_{7}(T)
&\leq&
\frac{C_p}{\lambda^2(\e)}E\Big[\int_0^{T\wedge\tau_{J}}\|X^{M,h^{\e}}(t)\|_{\ast}^{p-2}
\|u^{0}(t)+\sqrt{\e}\lambda(\e)X^{M,h^{\e}}(t)\|_{\mathbb{V}}^2dt\Big]\nonumber\\
&\leq&
C_p\int_0^{T}E\Big[\sup_{s\in[0,t\wedge\tau_{J}]}\|X^{M,h^{\e}}(s)\|_{\ast}^{p}\Big]dt\nonumber\\
& &\quad+C_{p,T}E\Big[\sup_{t\in[0,T]}\big(\|u^{0}(t)\|_{\mathbb{V}}^{p}+
\varepsilon^{\frac{p}{2}}\lambda^p(\varepsilon)\|X^{M,h^{\e}}(t)\|_{\mathbb{V}}^{p}\big)\Big].
\end{eqnarray}
Combining with (\ref{MDP * 00}) and (\ref{MDP * 01})--(\ref{MDP * 07}), we have
\begin{eqnarray}\label{MDP * 08}
& &(1-\delta-\eta-\kappa)E\Big[\sup_{t\in[0,T\wedge\tau_{J}]}\|X^{M,h^{\e}}(t)\|_{\ast}^p\Big]
+\frac{\nu p}{\alpha}E\Big[\int_0^{T\wedge\tau_{J}}\|X^{M,h^{\e}}(t)\|_{\ast}^{p}dt\Big]\nonumber\\
&\leq&
C_p\int_0^{T}E\Big[\sup_{s\in[0,t\wedge\tau_{J}]}\|X^{M,h^{\e}}(s)\|_{\ast}^{p}\Big]dt\nonumber\\
& &\quad+C_{p,\delta,\eta,\kappa,N,T}\Big\{\sup_{t\in[0,T]}\|u^{0}(t)\|_{\mathbb{V}}^{p}+E\Big[\sup_{t\in[0,T]}\|X^{M,h^{\e}}(t)\|_{\mathbb{V}}^{p}\big)\Big]\Big\}.
\end{eqnarray}
Let $\delta=\eta=\kappa=\frac{1}{4}$. The Gronwall lemma and (\ref{MDP * 08}) imply that
\begin{eqnarray}\label{MDP *}
E\Big[\sup_{t\in[0,T\wedge\tau_{J}]}\|X^{M,h^{\e}}(t)\|_{\ast}^p\Big]\leq C_{p,N}.
\end{eqnarray}
Letting $J\rightarrow\infty$ , we obtain (\ref{MDP Estimation 03}).
\end{proof}

\vskip 0.2cm

Let $\mathbb{K}$ be a separable Hilbert space. Given $p>1$, $\beta\in(0,1)$, let $W^{\beta,p}([0,T];\mathbb{K})$ be the
 space of all $u\in L^p([0,T];\mathbb{K})$ such that
$$
\int_0^T\int_0^T\frac{\|u(t)-u(s)\|^p_\mathbb{K}}{|t-s|^{1+\beta p}}dtds<\infty,
$$
endowed with the norm
$$
\|u\|^p_{W^{\beta,p}([0,T];\mathbb{K})}:=\int_0^T\|u(t)\|^p_{\mathbb{K}}dt+\int_0^T\int_0^T\frac{\|u(t)-u(s)\|^p_\mathbb{K}}{|t-s|^{1+\beta p}}dtds.
$$

The following result is a variant of the criteria for compactness proved in \cite{Lions} (Sect. 5, Ch. I)
 and \cite{Temam 1983} (Sect. 13.3).
\begin{lem}\label{Compact}{\rm
Let $\mathbb{K}_0\subset \mathbb{K}\subset \mathbb{K}_1$ be Banach spaces, $\mathbb{K}_0$ and $\mathbb{K}_1$ reflexive, with compact embedding of $\mathbb{K}_0$ into $\mathbb{K}$.
For $p\in(1,\infty)$ and $\beta\in(0,1)$, let $\Lambda$ be the space
$$
\Lambda=L^p([0,T];\mathbb{K}_0)\cap W^{\beta,p}([0,T];\mathbb{K}_1)
$$
endowed with the natural norm. Then the embedding of $\Lambda$ into $L^p([0,T];\mathbb{K})$ is compact.
}\end{lem}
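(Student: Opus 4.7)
The plan is to reduce compactness in $L^p([0,T];\mathbb{K})$ to compactness in $L^p([0,T];\mathbb{K}_1)$ by means of an Ehrling-type interpolation inequality, and then to establish the latter by the vector-valued Fr\'echet--Kolmogorov criterion applied to a suitably mollified version of any bounded sequence in $\Lambda$. For the reduction step, the compactness of $\mathbb{K}_0 \hookrightarrow \mathbb{K}$ together with the continuity $\mathbb{K} \hookrightarrow \mathbb{K}_1$ yields, by a standard contradiction argument, that for every $\eta>0$ there exists $C_\eta>0$ with $\|v\|_{\mathbb{K}} \le \eta\|v\|_{\mathbb{K}_0} + C_\eta\|v\|_{\mathbb{K}_1}$ for all $v\in\mathbb{K}_0$. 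Raising to the $p$-th power, integrating over $[0,T]$ and applying to differences $u_n-u_m$ of an arbitrary $\Lambda$-bounded sequence $(u_n)$ gives
$$
\|u_n-u_m\|_{L^p([0,T];\mathbb{K})} \le C\eta\,\sup_k\|u_k\|_{\Lambda} + C_\eta\,\|u_n-u_m\|_{L^p([0,T];\mathbb{K}_1)},
$$
so since $\eta$ is arbitrary it suffices to extract a subsequence that is Cauchy in $L^p([0,T];\mathbb{K}_1)$.

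For this Cauchy property I would invoke the vector-valued Fr\'echet--Kolmogorov--Riesz theorem. The Slobodeckij seminorm of $W^{\beta,p}([0,T];\mathbb{K}_1)$ dominates the Nikolskii seminorm, giving the uniform estimate
$$
\int_0^{T-h}\|u_n(t+h)-u_n(t)\|_{\mathbb{K}_1}^p\,dt \le C\,h^{\beta p}\,\|u_n\|_{W^{\beta,p}([0,T];\mathbb{K}_1)}^p,
$$
which settles equi-integrability of time translations. For the second ingredient, pointwise-in-$t$ relative compactness in $\mathbb{K}_1$, introduce the mollifications $u_{n,\epsilon}(t):=\epsilon^{-1}\int_t^{t+\epsilon}u_n(s)\,ds$: by Jensen's inequality and the $L^p([0,T];\mathbb{K}_0)$ bound, $\{u_{n,\epsilon}(t)\}_n$ is bounded in $\mathbb{K}_0$ for each fixed $t,\epsilon$, hence precompact in $\mathbb{K}_1$ via the chained embedding $\mathbb{K}_0 \hookrightarrow \mathbb{K} \hookrightarrow \mathbb{K}_1$ (the first arrow being compact). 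The same Nikolskii bound yields the uniform approximation $\|u_n-u_{n,\epsilon}\|_{L^p([0,T];\mathbb{K}_1)} \le C\epsilon^{\beta}$.

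For each fixed $\epsilon>0$, the regularized family $\{u_{n,\epsilon}\}_n \subset C([0,T];\mathbb{K}_1)$ is equicontinuous (again by the Nikolskii bound applied to $u_n$ and a straightforward difference estimate for the averages) and pointwise precompact, so Arzel\`a--Ascoli delivers a subsequence convergent in $C([0,T];\mathbb{K}_1)$, a fortiori in $L^p([0,T];\mathbb{K}_1)$. A standard diagonal extraction along $\epsilon_k:=1/k$, together with the uniform approximation $\|u_n-u_{n,\epsilon_k}\|_{L^p([0,T];\mathbb{K}_1)} = O(\epsilon_k^{\beta})$, produces a single subsequence Cauchy in $L^p([0,T];\mathbb{K}_1)$; combined with the Ehrling reduction of the first paragraph, this concludes the argument.

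The main obstacle is the pointwise-in-$t$ compactness step: because elements of $\Lambda$ are only defined almost everywhere, there is no direct sense in evaluating $u_n(t)$ at a given $t$, and one cannot simply apply the compact embedding $\mathbb{K}_0\hookrightarrow\mathbb{K}_1$ fibrewise. The mollification circumvents this by producing genuinely $\mathbb{K}_0$-valued representatives to which the compact injection applies, but then one must control the discrepancy $u_n - u_{n,\epsilon}$ uniformly in $n$ using only the abstract Slobodeckij seminorm; it is precisely the positivity of the fractional parameter $\beta$ that makes this uniform control possible and thus permits the diagonal passage $\epsilon\downarrow 0$ without losing the subsequence already extracted at each step.
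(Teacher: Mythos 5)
Your argument is sound, but note that it does not parallel anything in the paper: the paper gives no proof of this lemma, referring instead to the compactness criteria of Lions (Sect.~5, Ch.~I) and Temam (Sect.~13.3) (the statement is exactly Theorem 2.1 of Flandoli--Gatarek), whose classical proofs, at least in the Hilbert-space setting, run through Fourier characterizations of the fractional Sobolev space in the time variable. Your route is the direct, self-contained one: Ehrling's inequality $\|v\|_{\mathbb{K}}\le\eta\|v\|_{\mathbb{K}_0}+C_\eta\|v\|_{\mathbb{K}_1}$ (valid here since the inclusions are injective and $\mathbb{K}_0\hookrightarrow\mathbb{K}$ is compact) reduces everything to extracting a subsequence Cauchy in $L^p([0,T];\mathbb{K}_1)$, and that is obtained by a vector-valued Fr\'echet--Kolmogorov/Simon-type argument: the Slobodeckij seminorm indeed dominates the Nikolskii modulus, $\int_0^{T-h}\|u(t+h)-u(t)\|^p_{\mathbb{K}_1}\,dt\le C h^{\beta p}\|u\|^p_{W^{\beta,p}([0,T];\mathbb{K}_1)}$ (average the triangle inequality over an intermediate point $s\in[t,t+h]$), the Steklov averages are bounded in $\mathbb{K}_0$ pointwise and hence precompact in $\mathbb{K}_1$, and Arzel\`a--Ascoli plus a diagonal extraction in $\epsilon$ finishes; a pleasant by-product is that your proof never uses the reflexivity of $\mathbb{K}_0$ and $\mathbb{K}_1$, which the stated lemma (following Lions) assumes. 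The one point to tighten is the right endpoint: $u_{n,\epsilon}(t)=\epsilon^{-1}\int_t^{t+\epsilon}u_n(s)\,ds$ is undefined for $t>T-\epsilon$ and the translation estimate only integrates over $[0,T-h]$, so the claimed bound $\|u_n-u_{n,\epsilon}\|_{L^p([0,T];\mathbb{K}_1)}\le C\epsilon^{\beta}$ needs either a prior extension of each $u_n$ across $t=T$ by reflection (which preserves both the $L^p(\mathbb{K}_0)$ and $W^{\beta,p}(\mathbb{K}_1)$ norms up to a constant), or an argument on $[0,T']$ for every $T'<T$ together with a uniform control of the tail $\int_{T'}^{T}\|u_n\|^p_{\mathbb{K}_1}\,dt$ (e.g.\ via the embedding $W^{\beta,p}\hookrightarrow L^q$ with some $q>p$); alternatively, one can simply invoke Simon's compactness theorem in $L^p(0,T;B)$, whose two hypotheses are precisely the facts you verify.
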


\vskip 0.2cm

We will apply the above criteria to prove the following result.
\begin{prp}\label{Prop Tight}
$\{X^{h^{\e}}\}$ is tight in $L^2([0,T];\mathbb{V})$.
\end{prp}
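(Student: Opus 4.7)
The plan is to apply Lemma \ref{Compact} with $\mathbb{K}_0=\mathbb{W}$, $\mathbb{K}=\mathbb{V}$ and $\mathbb{K}_1=\mathbb{W}^*$; because the embedding $\mathbb{W}\hookrightarrow\mathbb{V}$ is compact via the Gelfand triple (\ref{Gelfand}), a ball in
$$\Lambda := L^2([0,T];\mathbb{W})\cap W^{\beta,2}([0,T];\mathbb{W}^*)$$
is relatively compact in $L^2([0,T];\mathbb{V})$ for any $\beta\in(0,\tfrac12)$. By Chebyshev's inequality, tightness of $\{X^{h^\e}\}$ in $L^2([0,T];\mathbb{V})$ then reduces to the uniform moment bound
$$\sup_{\e\in(0,\e_0)}E\bigl[\|X^{h^\e}\|_{L^2([0,T];\mathbb{W})}^2+\|X^{h^\e}\|_{W^{\beta,2}([0,T];\mathbb{W}^*)}^2\bigr]<\infty,$$
and the first summand is supplied directly by Lemma \ref{MDP lemma 01}, so the real work is to control the fractional Sobolev seminorm uniformly in $\e$.

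For that step, I would rewrite (\ref{MDP 03}) in integral form, $X^{h^\e}(t)=\sum_{k=1}^{5}I_k(t)$, separating the Stokes drift, the two bilinear terms, the Lipschitz $F$-increment, the Cameron--Martin drift driven by $\dot h^\e$, and the It\^o term, and estimate each piece. \textbf{(i)} The identity $(\widehat A u,v)_{\mathbb V}=((u,v))$ together with Poincar\'e's inequality gives $\|\widehat A X^{h^\e}\|_{\mathbb W^*}\leq C\|X^{h^\e}\|_{\mathbb V}$, so $I_1\in W^{1,2}([0,T];\mathbb W^*)$. \textbf{(ii)} Lemma \ref{Lem-B-01} yields $\|\widehat B(u,v)\|_{\mathbb W^*}\leq C\|u\|_{\mathbb W}\|v\|_{\mathbb V}$; combined with the uniform $\mathbb W$-bound on $X^{h^\e}$ (Lemma \ref{MDP lemma 01}) and the $\mathbb V$-bound on $u^0$ (Lemma \ref{Regularity}), this places both bilinear contributions in $L^\infty([0,T];\mathbb W^*)$, hence in $W^{1,2}$. \textbf{(iii)} Estimate (\ref{Jian F1}) bounds the $F$-difference in $\mathbb V$ by $C_F\|X^{h^\e}(t)\|_{\mathbb V}$, so $I_3\in W^{1,2}([0,T];\mathbb V)\hookrightarrow W^{1,2}([0,T];\mathbb W^*)$. \textbf{(iv)} For the Cameron--Martin drift, Cauchy--Schwarz together with $h^\e\in S_N$ and (\ref{G-01}), (\ref{Jian G}) gives
$$\|I_4(t)-I_4(s)\|_{\mathbb V}\leq \sqrt N\,(t-s)^{1/2}\sup_r\|\widehat G(u^0+\sqrt\e\lambda(\e)X^{h^\e})\|_{\mathbb V^{\otimes m}},$$
so $I_4$ is $\tfrac12$-H\"older in $\mathbb V$ and thus in $W^{\beta,2}([0,T];\mathbb V)$ for every $\beta<\tfrac12$. \textbf{(v)} For the It\^o term $I_5=\lambda(\e)^{-1}\int_0^\cdot\widehat G(\cdots)\,dW$, the classical Flandoli--Gatarek fractional estimate bounds $E\|I_5\|_{W^{\beta,2}([0,T];\mathbb V)}^2$ by $C_\beta\lambda(\e)^{-2}E\int_0^T\|\widehat G(u^0+\sqrt\e\lambda(\e)X^{h^\e})\|_{\mathbb V^{\otimes m}}^2\,dr$, which, by (\ref{G-01}), (\ref{Jian G}) and the $\mathbb V$-moment on $X^{h^\e}$, is uniformly bounded (the factor $\lambda(\e)^{-2}$ is an asset, not a liability, since $\lambda(\e)\to\infty$).

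The only delicate point is the choice of a time-regularity target space weak enough to absorb the bilinear drift yet strong enough to retain compactness gain against $L^2([0,T];\mathbb V)$: taking $\mathbb K_1=\mathbb W^*$ rather than $\mathbb V$ is exactly what makes Lemma \ref{Lem-B-01} directly applicable, while still letting the compact embedding $\mathbb W\hookrightarrow\mathbb V$ do its job. Once the five contributions above are assembled, choosing $R$ large in the compact set $K_R:=\{u\in\Lambda:\|u\|_\Lambda\leq R\}$ and invoking Markov's inequality yields $\sup_\e P(X^{h^\e}\notin K_R)\to 0$ as $R\to\infty$, which is precisely tightness of $\{X^{h^\e}\}$ in $L^2([0,T];\mathbb V)$.
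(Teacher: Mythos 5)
Your proposal is correct and follows essentially the same route as the paper: the same decomposition of $X^{h^\e}$ into drift, bilinear, $F$-, Cameron--Martin and It\^o contributions, the same term-by-term estimates in $W^{\beta,2}([0,T];\mathbb{W}^*)$ (using Lemma \ref{Lem-B-01}, (\ref{Jian F1}), (\ref{Jian G}) and a BDG/Flandoli--Gatarek-type bound for the stochastic integral), and the same conclusion via Lemma \ref{Compact} with $\Lambda=L^2([0,T];\mathbb{W})\cap W^{\beta,2}([0,T];\mathbb{W}^*)$ together with Lemma \ref{MDP lemma 01} and Chebyshev's inequality. The only differences are cosmetic (you place some deterministic-integral terms in $W^{1,2}$ rather than estimating increments directly), so no gap to report.
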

\begin{proof}
Note that
\begin{eqnarray*}
X^{h^{\varepsilon}}(t)
&=&-\int_0^t\nu\widehat{A}X^{h^{\varepsilon}}(s)ds-\int_0^t\widehat{B}(X^{h^{\varepsilon}}(s),u^{0}(s)
+\sqrt{\varepsilon}\lambda(\varepsilon)X^{h^{\varepsilon}}(s))ds-\int_0^t\widehat{B}(u^{0}(s),X^{h^{\varepsilon}}(s))ds\nonumber\\
& &+\int_0^t\frac{1}{\sqrt{\e}\lambda(\varepsilon)}
(\widehat{F}(u^0(s)+\sqrt{\varepsilon}\lambda(\varepsilon)X^{h^{\varepsilon}}(s),s)-\widehat{F}(u^0(s),s))ds\nonumber\\
& &+\int_0^t\frac{1}{\lambda(\e)}\widehat{G}(u^{0}(s)+\sqrt{\varepsilon}\lambda(\varepsilon)X^{h^{\varepsilon}}(s),s)dW(s)
+\int_0^t\widehat{G}(u^{0}(s)+\sqrt{\varepsilon}\lambda(\varepsilon)X^{h^{\varepsilon}}(s),s)\dot{h}^{\e}(s)ds\nonumber\\
&:=&I_{1}(t)+I_{2}(t)+I_{3}(t)+I_{4}(t)+I_{5}(t)+I_{6}(t).
\end{eqnarray*}
By Lemma \ref{MDP lemma 01}, we have
\begin{equation}\label{prop 02}
E\Big[\int_0^T\|X^{h^{\e}}(t)\|^2_{\mathbb{W}}dt\Big]\leq C_{2,N}
\end{equation}
where $C_{2,N}$ is a constant independent of $\e$. We next prove
\begin{equation}\label{prop 03}
\sup_{\e\in(0,\e_0)}E\Big[\|X^{h^{\e}}\|^2_{W^{\beta,2}([0,T],\mathbb{W}^{\ast})}\Big]\leq C_\beta<\infty, \ \ \beta\in(0,1/2)
\end{equation}
here $\e_0$ is the constant stated in Lemma \ref{MDP lemma 01}.

\vskip 0.2cm

Noting that, for any $u\in\mathbb{W}$ and $v\in\mathbb{V}$,
\begin{eqnarray*}
(\widehat{A}u,v)_\mathbb{V}=((u,v)),
\end{eqnarray*}
and $\|v\|^2_\mathbb{V}\geq \alpha\|v\|^2$ (see (\ref{a})), we have
\begin{eqnarray}\label{Estation-A}
\|\widehat{A}u\|_\mathbb{V}
=
\sup_{\|v\|_\mathbb{V}\leq1}|(\widehat{A}u,v)_\mathbb{V}|
=
\sup_{\|v\|_\mathbb{V}\leq1}|((u,v))|
\leq
\|u\|\sup_{\|v\|_\mathbb{V}\leq1}\|v\|
\leq
\alpha^{-1/2}\|u\|.
\end{eqnarray}
Then
\begin{eqnarray}\label{tight *01}
\|I_1(t)-I_1(s)\|^2_\mathbb{V}
&=&
\|\int_s^t\nu \widehat{A}X^{h^\e}(l)dl\|^2_\mathbb{V}
\leq
\int_s^t\|\nu \widehat{A}X^{h^\e}(l)\|^2_\mathbb{V}dl(t-s)\nonumber\\
&\leq&
\nu^2\alpha^{-1}\int_s^t\|X^{h^\e}(l)\|^2dl(t-s)\nonumber\\
&\leq&
\nu^2\alpha^{-2}\sup_{l\in[0,T]}\|X^{h^\e}(l)\|^2_\mathbb{V}(t-s)^2.
\end{eqnarray}
By (\ref{tight *01}), we have
\begin{eqnarray}\label{tight 01}
E\Big[\|I_1\|^2_{\mathbb{W}^{\beta,2}([0,T];\mathbb{V})}\Big]
&=&
E\Big[\int_0^T\|I_1(s)\|^2_\mathbb{V}ds
+
\int_0^T\int_0^T\frac{\|I_1(t)-I_1(s)\|^2_\mathbb{V}}{|t-s|^{1+2\beta}}dsdt\Big]\nonumber\\
&\leq&
TE\Big[\sup_{s\in[0,T]}\|I_1(s)\|^2_\mathbb{V}\Big]
+
E\int_0^T\int_0^T\nu^2\alpha^{-2}\sup_{l\in[0,T]}\|X^{h^\e}(l)\|^2_\mathbb{V}(t-s)^{1-2\beta}dsdt\nonumber\\
&\leq&
C_{\beta,T}E\Big[\sup_{l\in[0,T]}\|X^{h^\e}(l)\|^2_\mathbb{V}\Big],\ \ \ \beta\in(0,1).
\end{eqnarray}
By Lemma \ref{Lem-B-01},

\begin{eqnarray*}
\|I_2(t)-I_2(s)\|^2_{{\mathbb{W}}^{\ast}}
&=&
\|\int_s^t\widehat{B}(X^{h^{\varepsilon}}(l),u^{0}(l)+\sqrt{\varepsilon}\lambda(\varepsilon)X^{h^{\varepsilon}}(l))dl\|^2_{{\mathbb{W}}^{\ast}}\nonumber\\
&\leq&
\int_s^t\|\widehat{B}(X^{h^{\varepsilon}}(l),u^{0}(l)+\sqrt{\varepsilon}\lambda(\varepsilon)X^{h^{\varepsilon}}(l))\|^2_{{\mathbb{W}}^{\ast}}dl\cdot(t-s)\nonumber\\
&\leq&
\int_s^t C\|X^{h^{\varepsilon}}(l)\|^2_{\mathbb{W}}\|u^{0}(l)+\sqrt{\varepsilon}\lambda(\varepsilon)X^{h^{\varepsilon}}(l))\|^2_{\mathbb{V}}dl\cdot(t-s)\nonumber\\
&\leq&
C\sup_{l\in[0,T]}\|X^{h^{\e}}(l)\|^2_{\mathbb{W}}\sup_{l\in[0,T]}
\big(\|u^{0}(l)\|^2_{\mathbb{V}}+\varepsilon\lambda^2(\varepsilon)\|X^{h^{\e}}(l)\|^2_{\mathbb{V}}\big)(t-s)^2\nonumber\\
&\leq&
C\sup_{l\in[0,T]}\|u^{0}(l)\|^2_{\mathbb{V}}\sup_{l\in[0,T]}\|X^{h^{\varepsilon}}(l))\|^2_{\mathbb{W}}(t-s)^2\nonumber\\
& &\quad+C\sup_{l\in[0,T]}\|X^{h^{\varepsilon}}(l))\|^4_{\mathbb{W}}(t-s)^2
+C\varepsilon^2\lambda^4(\varepsilon)\|X^{h^{\varepsilon}}(l))\|^4_{\mathbb{V}}(t-s)^2,
\end{eqnarray*}
and
\begin{eqnarray*}
\|I_3(t)-I_3(s)\|^2_{{\mathbb{W}}^{\ast}}
&=&
\|\int_s^t\widehat{B}(u^{0}(l),X^{h^{\varepsilon}}(l))dl\|^2_{{\mathbb{W}}^{\ast}}\nonumber\\
&\leq&
\int_s^t\|\widehat{B}(u^{0}(l),X^{h^{\varepsilon}}(l))\|^2_{{\mathbb{W}}^{\ast}}dl\cdot(t-s)\nonumber\\
&\leq&
\int_s^t C\|u^{0}(l)\|^2_{\mathbb{W}}\|X^{h^{\varepsilon}}(l))\|^2_{\mathbb{V}}dl\cdot(t-s)\nonumber\\
&\leq&
C\sup_{l\in[0,T]}\|u^{0}(l)\|^2_{\mathbb{W}}\sup_{l\in[0,T]}\|X^{h^{\varepsilon}}(l))\|^2_{\mathbb{V}}(t-s)^2,
\end{eqnarray*}
which yield, for $\beta\in(0,1)$
\begin{eqnarray}\label{tight 02}
& &E\Big[\|I_2\|^2_{\mathbb{W}^{\beta,2}([0,T];\mathbb{W}^*)}\Big]\nonumber\\
&\leq&
C_{\beta,T}E\Big[\sup_{l\in[0,T]}\|X^{h^\e}(l)\|^4_\mathbb{W}\Big]
+C_{\beta,T}\sup_{l\in[0,T]}\|u^{0}(l)\|^2_{\mathbb{V}}E\Big[\sup_{l\in[0,T]}\|X^{h^\e}(l)\|^2_\mathbb{W}\Big],
\end{eqnarray}
and
\begin{eqnarray}\label{tight 03}
E\Big[\|I_3\|^2_{\mathbb{W}^{\beta,2}([0,T];\mathbb{W}^*)}\Big]
\leq
C_{\beta,T}\sup_{l\in[0,T]}\|u^{0}(l)\|^2_{\mathbb{W}}E\Big[\sup_{l\in[0,T]}\|X^{h^{\varepsilon}}(l))\|^2_{\mathbb{V}}\Big].
\end{eqnarray}
By (\ref{F-01})--(\ref{F-04}),
\begin{eqnarray*}
\|I_4(t)-I_4(s)\|^2_\mathbb{V}
&=&
\|\frac{1}{\sqrt{\e}\lambda(\e)}\int_s^t
    (\widehat{F}(u^0(l)+\sqrt{\e}\lambda(\varepsilon)X^{h^{\e}}(l),l)-\widehat{F}(u^0(l),l))dl\|^2_{\mathbb{V}}\nonumber\\
&\leq&
\frac{1}{\sqrt{\e}\lambda(\e)}\int_s^t\|
    (\widehat{F}(u^0(l)+\sqrt{\e}\lambda(\e)X^{h^{\e}}(l),l)-\widehat{F}(u^0(l),l))\|^2_{\mathbb{V}}dl\cdot(t-s)\nonumber\\
&\leq&
C\int_s^t\|X^{h^{\varepsilon}}(l)\|^2_{\mathbb{V}}dl\cdot(t-s)\nonumber\\
&\leq&
C\sup_{l\in[0,T]}\|X^{h^{\varepsilon}}(l)\|^2_{\mathbb{V}}(t-s)^2,
\end{eqnarray*}
which implies
\begin{eqnarray}\label{tight 04}
E\Big[\|I_4\|^2_{\mathbb{W}^{\beta,2}([0,T];\mathbb{W}^*)}\Big]
\leq
C_{\beta,T}E\Big[\sup_{l\in[0,T]}\|X^{h^{\varepsilon}}(l)\|^2_{\mathbb{V}}\Big],\ \ \beta\in(0,1).
\end{eqnarray}
By (\ref{G-01}), (\ref{Jian G}) and using B-D-G, H$\ddot{o}$lder's inequalities,
\begin{eqnarray*}
E\Big[\|I_5(t)-I_5(s)\|^{2p}_\mathbb{V}\Big]
&=&
E\Big[\frac{1}{\lambda^2(\e)}\int_s^t\|\widehat{G}(u^{0}(l)+\sqrt{\varepsilon}\lambda(\varepsilon)X^{h^{\varepsilon}}(l),l)\|^2_{{\mathbb{V}}^{\otimes m}}dl\Big]^p\nonumber\\
&\leq&
\frac{C_p}{\lambda^{2p}(\e)}E\Big[\int_s^t\|u^{0}(l)+\sqrt{\varepsilon}\lambda(\varepsilon)X^{h^{\varepsilon}}(l)\|^2_{\mathbb{V}}dl\Big]^p\nonumber\\
&\leq&
C_p\sup_{l\in[0,T]}\|u^{0}(l)\|^{2p}_{\mathbb{V}}(t-s)^p+C_p\varepsilon E\Big[\sup_{l\in[0,T]}\|X^{h^{\varepsilon}}(l)\|^{2p}_{\mathbb{V}}\Big](t-s)^p.
\end{eqnarray*}
Hence
\begin{eqnarray}\label{tight 05}
&&E\Big[\|I_5\|^{2p}_{\mathbb{W}^{\beta,2p}([0,T];\mathbb{W}^*)}\Big]\\
&\leq&
C_{\beta,T,p}\Big(\sup_{l\in[0,T]}\|u^{0}(l)\|^{2p}_{\mathbb{V}}+E\Big[\sup_{l\in[0,T]}\|X^{h^{\varepsilon}}(l))\|^{2p}_{\mathbb{V}}\Big]\Big)\nonumber\\
&&\cdot\Big(1+\int_0^T\int_0^T|t-s|^{(1-2\beta)p-1}dsdt\Big)
\leq C_{\beta,T,p}<\infty,\ \ \beta\in(0,1/2),\ (1-2\beta)p>0.\nonumber
\end{eqnarray}

By (\ref{G-01}), (\ref{Jian G}) and H$\ddot{o}$lder's inequality again,
\begin{eqnarray*}
\|I_6(t)-I_6(s)\|^2_\mathbb{V}
&=&
\|\int_s^t\widehat{G}(u^{0}(l)+\sqrt{\varepsilon}\lambda(\varepsilon)X^{h^{\varepsilon}}(l),l)\dot{h}^{\e}(l)dl\|^2_{\mathbb{V}}\nonumber\\
&\leq&
\int_s^t\|\widehat{G}(u^{0}(l)+\sqrt{\varepsilon}\lambda(\varepsilon)X^{h^{\varepsilon}}(l),l)\|^2_{{\mathbb{V}}^{\otimes m}}dl\int_s^t\|\dot{h}^{\e}(l)\|^2_{{\mathbb{R}}^m}dl\nonumber\\
&\leq&
C\int_s^t\|u^{0}(l)+\sqrt{\varepsilon}\lambda(\varepsilon)X^{h^{\varepsilon}}(l)\|^2_{\mathbb{V}}dl\int_s^t\|\dot{h}^{\e}(l)\|^2_{{\mathbb{R}}^m}dl\nonumber\\
&\leq&
C_{T}\Big\{\sup_{l\in[0,T]}\|u^{0}(l)\|^2_{\mathbb{V}}+\|X^{h^{\varepsilon}}(l)\|^2_{\mathbb{V}}\Big\}\int_s^t\|\dot{h}^{\e}(l)\|^2_{{\mathbb{R}}^m}dl(t-s),
\end{eqnarray*}
which implies, by Fubini Theorem,
\begin{eqnarray}\label{tight 06}
E\Big[\|I_6\|^2_{\mathbb{W}^{\beta,2}([0,T];\mathbb{W}^*)}\Big]
\leq
C_{\beta,N,T}\Big\{\sup_{l\in[0,T]}\|u^{0}(l)\|^2_{\mathbb{V}}
+E\Big[\sup_{l\in[0,T]}\|X^{h^{\varepsilon}}(l)\|^2_{\mathbb{V}}\Big]\Big\},\ \ \beta\in(0,1).
\end{eqnarray}
Combining (\ref{tight 01})--(\ref{tight 06}) and (\ref{MDP Estimation 01}), we obtain (\ref{prop 03}).\\
Since the imbedding $\mathbb{W}\subset\mathbb{V}$ is compact, by Lemma \ref{Compact},
$$\Lambda=L^2([0,T],\mathbb{W})\cap\mathbb{W}^{\beta,2}([0,T],\mathbb{W}^*),\ \ \beta\in(0,1/2)$$
is compactly imbedded in $L^2([0,T],\mathbb{V})$. Denote $\|\cdot\|_\Lambda:=\|\cdot\|_{L^2([0,T],\mathbb{W})}+\|\cdot\|_{\mathbb{W}^{\beta,2}([0,T],\mathbb{W}^*)}$. Thus for any $L>0$,
$$K_L=\{u\in L^2([0,T],\mathbb{V}),\ \|u\|_\Lambda\leq L\}$$
is relatively compact in $L^2([0,T],\mathbb{V})$.
We have $$P(X^{h^\e}\not\in K_L)\leq P(\|X^{h^\e}\|_\Lambda\geq L)\leq \frac{1}{L}E(\|X^{h^\e}\|_\Lambda)\leq \frac{C}{L}.$$
Freely choosing the constant $L$, we see that $\{X^{h^\e},\ \e>0\}$ is tight in $L^2([0,T],\mathbb{V})$.
\end{proof}
\vskip 0.6cm

Using similar arguments as in the proof of Theorem 2.2 in \cite{FG95}, we see that the imbedding $C([0,T],\mathbb{V})\cap\Big(\mathbb{W}^{\beta_1,p_1}([0,T],\mathbb{W}^*)+\cdots +\mathbb{W}^{\beta_m,p_m}([0,T],\mathbb{W}^*)\Big)\subset C([0,T];\mathbb{W}^*)$ is compact if $\beta_i p_i>1$, $i=1, 2, \cdots, m$. The following result is a
consequence of (\ref{tight 01}) (\ref{tight 02}) (\ref{tight 03}) (\ref{tight 04}) (\ref{tight 05}) and (\ref{tight 06}).
\begin{prp}\label{Prop tight W}
$\{X^{h^\e}\}$ is tight in $C([0,T];\mathbb{W}^*)$.
\end{prp}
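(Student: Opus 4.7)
My plan is to reuse the decomposition $X^{h^\e}=I_1+I_2+I_3+I_4+I_5+I_6$ introduced in the proof of Proposition \ref{Prop Tight}, combine the fractional-Sobolev estimates \eqref{tight 01}--\eqref{tight 06} with the uniform bound in Lemma \ref{MDP lemma 01}, and then invoke the compact embedding recalled from \cite{FG95} together with Chebyshev's inequality.

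First, from Lemma \ref{MDP lemma 01} and the continuous embedding $\mathbb{W}\hookrightarrow\mathbb{V}$, one gets
\begin{equation*}
\sup_{\e\in(0,\e_0)}E\Big[\sup_{t\in[0,T]}\|X^{h^{\varepsilon}}(t)\|_{\mathbb{V}}^2\Big]\leq C_N,
\end{equation*}
so $\{X^{h^\e}\}$ is uniformly bounded in $L^2(\Omega;C([0,T];\mathbb{V}))$ (continuity in $\mathbb{V}$ being part of the definition of the solution). Next, I would fix some $\beta^\ast\in(1/2,1)$. The estimates \eqref{tight 01}, \eqref{tight 02}, \eqref{tight 03}, \eqref{tight 04} and \eqref{tight 06}, together with Lemma \ref{MDP lemma 01} and Lemma \ref{Regularity}, give
\begin{equation*}
\sup_{\e\in(0,\e_0)}E\Big[\|I_j\|^2_{W^{\beta^\ast,2}([0,T];\mathbb{W}^*)}\Big]\leq C_{\beta^\ast,N,T}<\infty,\qquad j=1,2,3,4,6.
\end{equation*}
For the stochastic convolution $I_5$, estimate \eqref{tight 05} must be used with an exponent larger than $2$: choose any $\beta_0\in(0,1/2)$ and then $p$ large enough that $2p\beta_0>1$; \eqref{tight 05} then yields
\begin{equation*}
\sup_{\e\in(0,\e_0)}E\Big[\|I_5\|^{2p}_{W^{\beta_0,2p}([0,T];\mathbb{W}^*)}\Big]\leq C_{\beta_0,T,p}<\infty.
\end{equation*}

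With these two scales of regularity in hand, I would apply the compactness criterion recalled just before the statement: with $\beta_1=\beta^\ast$, $p_1=2$ (so $\beta_1 p_1>1$) and $\beta_2=\beta_0$, $p_2=2p$ (so $\beta_2 p_2>1$), the embedding
\begin{equation*}
C([0,T];\mathbb{V})\cap\Big(W^{\beta^\ast,2}([0,T];\mathbb{W}^*)+W^{\beta_0,2p}([0,T];\mathbb{W}^*)\Big)\hookrightarrow C([0,T];\mathbb{W}^*)
\end{equation*}
is compact. Define, for $L>0$, the set
\begin{equation*}
K_L=\Big\{u\in C([0,T];\mathbb{V}):\ \|u\|_{C([0,T];\mathbb{V})}+\|u_{(1)}\|_{W^{\beta^\ast,2}([0,T];\mathbb{W}^*)}+\|u_{(2)}\|_{W^{\beta_0,2p}([0,T];\mathbb{W}^*)}\leq L\Big\},
\end{equation*}
interpreted via the decomposition $X^{h^\e}=u_{(1)}+u_{(2)}$ with $u_{(1)}=\sum_{j\neq5}I_j$ and $u_{(2)}=I_5$; each $K_L$ is relatively compact in $C([0,T];\mathbb{W}^*)$.

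Finally, a direct application of Chebyshev's inequality together with the bounds above gives
\begin{equation*}
P\big(X^{h^\e}\notin K_L\big)\leq \frac{C_{\beta^\ast,\beta_0,N,T,p}}{L},\qquad\forall\,\e\in(0,\e_0),
\end{equation*}
so choosing $L$ arbitrarily large shows that $\{X^{h^\e}\}_{\e>0}$ is tight in $C([0,T];\mathbb{W}^*)$. The main conceptual obstacle is purely the integrability mismatch between the stochastic term $I_5$ (for which \eqref{tight 05} only provides $W^{\beta,2}$ control when $\beta<1/2$, giving $2\beta<1$) and the deterministic terms (which give $W^{\beta,2}$ for every $\beta<1$); this is overcome exactly by raising the moment exponent of $I_5$ to $2p$ with $p$ large, which is why the compactness criterion was formulated as a sum of Sobolev--Slobodetskii spaces rather than a single one.
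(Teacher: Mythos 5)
Your proposal is correct and follows essentially the same route as the paper: the paper simply states that the proposition is a consequence of the estimates (\ref{tight 01})--(\ref{tight 06}) together with the compact embedding $C([0,T],\mathbb{V})\cap\big(\mathbb{W}^{\beta_1,p_1}([0,T],\mathbb{W}^*)+\cdots+\mathbb{W}^{\beta_m,p_m}([0,T],\mathbb{W}^*)\big)\subset C([0,T];\mathbb{W}^*)$ for $\beta_i p_i>1$ borrowed from \cite{FG95}, and your argument just makes this explicit (deterministic terms in $W^{\beta^\ast,2}$ with $\beta^\ast>1/2$, the stochastic integral in $W^{\beta_0,2p}$ with $2p\beta_0>1$, then Chebyshev). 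No substantive difference or gap.
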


\vskip 0.2cm
We are ready now to verify the condition (a) in Theorem \ref{thm BD}.
\vskip 0.2cm

\begin{thm}\label{Thm condition 02}
For every fixed $N\in\mathbb{N}$, let $h^\e,\ h\in\mathcal{A}_N$ be such that $h^\e$ converges in distribution to $h$ as $\e\rightarrow0$.
Then
$$
\Gamma^\e\left(W(\cdot)+{\lambda(\e)}\int_0^{\cdot}\dot h^\e(s)ds\right)\text{ converges in distribution to }\Gamma^0(\int_0^{\cdot}\dot h(s)ds)
$$
in $C([0,T];\mathbb{V})$ as $\e\rightarrow0$.
\end{thm}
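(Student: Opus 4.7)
My strategy is the standard weak-convergence-method identification: establish joint tightness, invoke Skorokhod representation, pass to the limit term-by-term in the controlled equation (\ref{MDP 03}), and use uniqueness of the deterministic limit (\ref{MDP 02}) to identify the subsequential limit with $X^{h}$. Uniqueness of the limit then forces convergence in distribution of the whole family.

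I first establish joint tightness of $(X^{h^\e},h^\e,W)$ on the Polish space
$\mathcal{E}:=\bigl(L^2([0,T];\mathbb{V})\cap C([0,T];\mathbb{W}^*)\bigr)\times S_N\times C([0,T];\mathbb{R}^m)$: the first marginal is tight by Propositions~\ref{Prop Tight} and~\ref{Prop tight W}, $h^\e$ is tight since $S_N$ with the weak topology is compact, and $W$ is fixed. Skorokhod's theorem then produces a new probability space carrying copies $(\tilde X^\e,\tilde h^\e,\tilde W^\e)\to(\tilde X,\tilde h,\tilde W)$ a.s.\ in $\mathcal{E}$, with the copies still satisfying (\ref{MDP 03}). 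By Lemma~\ref{MDP lemma 01} and Fatou, $E\sup_{t\in[0,T]}\|\tilde X\|_\mathbb{W}^p\le C_{p,N}$, and passing to a further subsequence yields $\tilde X^\e\rightharpoonup\tilde X$ weakly in $L^2([0,T];\mathbb{W})$ a.s.

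Next I pass to the limit in (\ref{MDP 03}) tested against a fixed $v\in\mathbb{W}$. The viscous term converges by weak $L^2([0,T];\mathbb{V})$ convergence. For the bilinear terms I decompose
\[
\widehat B(\tilde X^\e,u^0+\sqrt\e\lambda(\e)\tilde X^\e)=\widehat B(\tilde X^\e,u^0)+\sqrt\e\lambda(\e)\widehat B(\tilde X^\e,\tilde X^\e);
\]
the second piece is $O(\sqrt\e\lambda(\e))$ in $L^1([0,T];\mathbb{W}^*)$ by (\ref{Eq B-02}) and Lemma~\ref{MDP lemma 01}, hence vanishes by condition (\ref{condition}). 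The first piece together with $\widehat B(u^0,\tilde X^\e)$ is handled via the bound (\ref{Eq B-01}), which makes $\widehat B(\cdot,u^0)$ a bounded linear map $L^2([0,T];\mathbb{W})\to L^2([0,T];\mathbb{W}^*)$ (hence weakly continuous) and $\widehat B(u^0,\cdot)$ a bounded linear map $L^2([0,T];\mathbb{V})\to L^2([0,T];\mathbb{W}^*)$ (hence strongly continuous), using $u^0\in L^\infty([0,T];\mathbb{W}\cap\mathbb{H}^4)$ (Lemma~\ref{Regularity}). The drift is treated via the mean-value formula
\[
\frac{\widehat F(u^0+\sqrt\e\lambda(\e)\tilde X^\e,s)-\widehat F(u^0,s)}{\sqrt\e\lambda(\e)}=\int_0^1\widehat F'(u^0+\theta\sqrt\e\lambda(\e)\tilde X^\e,s)\,\tilde X^\e\,d\theta,
\]
whose limit is $\widehat F'(u^0,s)\tilde X$ by (\ref{F-04}) and (\ref{Jian F2}). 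The stochastic term $\lambda(\e)^{-1}\int_0^\cdot\widehat G(\ldots)\,d\tilde W^\e$ has second moment $O(\lambda(\e)^{-2})\to 0$. The driver term splits as
\[
\int_0^t\widehat G(u^0(s),s)\dot{\tilde h}^\e(s)\,ds+\int_0^t\bigl[\widehat G(u^0+\sqrt\e\lambda(\e)\tilde X^\e,s)-\widehat G(u^0,s)\bigr]\dot{\tilde h}^\e(s)\,ds,
\]
where the first summand converges to $\int_0^t\widehat G(u^0,s)\dot{\tilde h}(s)\,ds$ by weak $L^2([0,T];\mathbb{R}^m)$ convergence of $\dot{\tilde h}^\e$ tested against the fixed $L^2$ kernel $\widehat G(u^0(\cdot),\cdot)\mathbf{1}_{[0,t]}$, and the second is $O(\sqrt\e\lambda(\e))$ by (\ref{Jian G}), Lemma~\ref{MDP lemma 01}, $\tilde h^\e\in S_N$, and Cauchy--Schwarz.

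Combining these limits shows that $\tilde X$ solves (\ref{MDP 02}) driven by $\dot{\tilde h}$, so by uniqueness $\tilde X=\Gamma^0\bigl(\int_0^\cdot\dot{\tilde h}(s)\,ds\bigr)$; since $\tilde h\stackrel{d}{=}h$ and $\Gamma^0$ is deterministic, every subsequential limit of $X^{h^\e}$ has the law of $X^h$, yielding convergence in distribution in $L^2([0,T];\mathbb{V})\cap C([0,T];\mathbb{W}^*)$. The main obstacle is upgrading this to convergence in $C([0,T];\mathbb{V})$. I plan to establish tightness of $\{X^{h^\e}\}$ in $C([0,T];\mathbb{V})$ directly by an Ascoli-type criterion: combining the moment bound $E\sup_{t\in[0,T]}\|X^{h^\e}(t)\|_\mathbb{W}^p\le C_{p,N}$ (Lemma~\ref{MDP lemma 01}), the H\"older-type equicontinuity in $\mathbb{W}^*$ contained in the estimates on $I_1,\ldots,I_6$ from the proof of Proposition~\ref{Prop tight W}, and the compact embedding $\mathbb{W}\hookrightarrow\mathbb{V}$ (via interpolation $\mathbb{V}$ sits between $\mathbb{W}$ and $\mathbb{W}^*$), we obtain tightness in $C([0,T];\mathbb{V})$ after a Markov truncation. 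Tightness in this stronger topology combined with the identification above then yields the desired convergence in distribution in $C([0,T];\mathbb{V})$.
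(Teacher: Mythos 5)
Your overall architecture coincides with the paper's: tightness of $\{X^{h^\e}\}$ in the larger space $L^2([0,T];\mathbb{V})\cap C([0,T];\mathbb{W}^*)$ (Propositions \ref{Prop Tight} and \ref{Prop tight W}), Skorokhod representation for the triple $(X^{h^\e},h^\e,W)$, term-by-term passage to the limit in (\ref{MDP 03}) using the uniform bound of Lemma \ref{MDP lemma 01}, and identification of the limit with the unique solution of (\ref{MDP 02}); all of your individual limit arguments (the $O(\sqrt{\e}\lambda(\e))$ bound on $\widehat B(\tilde X^\e,\tilde X^\e)$ via (\ref{Eq B-02}), the mean-value treatment of $\widehat F$, the $O(\lambda(\e)^{-2})$ stochastic integral, the splitting of the $\dot h^\e$ term) are exactly the paper's items (a)--(i). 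One small repair: your statement that a further subsequence gives $\tilde X^\e\rightharpoonup\tilde X$ weakly in $L^2([0,T];\mathbb{W})$ \emph{a.s.} is not justified by moment bounds alone (the bound is only in expectation, and you cannot pick $\omega$-dependent subsequences); the correct formulation, as in the paper, is weak (resp.\ weak-*) convergence in the Bochner spaces over $\Omega\times[0,T]$, which suffices for all your termwise limits.

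Where you genuinely diverge is the upgrade to $C([0,T];\mathbb{V})$. The paper does \emph{not} prove tightness in $C([0,T];\mathbb{V})$ (the introduction explicitly says tightness is only obtained in a larger space); instead it sets $v^\e=\tilde X^\e-\tilde X$ on the Skorokhod space, applies It\^o's formula to $\|v^\e\|^2_{\mathbb{V}}$, and runs a Gronwall argument (\ref{Eq gronwall}) in which the prefactor $\Theta(\e,T)$ tends to $0$ in probability precisely because $\int_0^T\|v^\e(s)\|^2_{\mathbb{V}}ds\to0$ a.s.\ from the Skorokhod convergence, plus the scales $\sqrt{\e}\lambda(\e)$, $\lambda(\e)^{-1}$; this yields (\ref{Eq F01}) directly, with no time-equicontinuity estimate in the strong topology needed. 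Your alternative — Markov truncation on $\sup_t\|\cdot\|_{\mathbb{W}}$, the interpolation $\|w\|^2_{\mathbb{V}}\le\|w\|_{\mathbb{W}}\|w\|_{\mathbb{W}^*}$ from the Gelfand triple (\ref{Gelfand}), and Arzel\`a--Ascoli — is viable, but note that the fractional Sobolev bounds of Proposition \ref{Prop tight W} do not hand you H\"older equicontinuity for free: the $W^{\beta,2}$ bounds with $\beta<1/2$ do not embed into a H\"older space, so you must extract the Lipschitz/H\"older-$\tfrac12$ increments of $I_1$--$I_4$, $I_6$ directly from their integral form (with random constants having uniformly bounded moments) and, for the stochastic integral $I_5$, invoke Kolmogorov's continuity criterion (or the embedding $W^{\beta,2p}\hookrightarrow C^{0,\beta-1/(2p)}$ with $2p\beta>1$ from (\ref{tight 05})) to get uniform moments of a $C^{0,\gamma}([0,T];\mathbb{W}^*)$ norm. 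If you supply these estimates, your route works and even yields the stronger intermediate statement of tightness in $C([0,T];\mathbb{V})$; the paper's Gronwall route buys the same conclusion with less equicontinuity machinery, at the price of one more It\^o computation, and is the reason the authors can live with tightness only in the weaker space.
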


\begin{proof}
Note that $X^{h^\e}=\Gamma^\e\left(W(\cdot)+{\lambda(\e)}\int_0^{\cdot}\dot h^\e(s)ds\right)$. By Proposition \ref{Prop Tight} and Proposition \ref{Prop tight W}, we know that
$\{X^{h^\e}\}$ is tight in $L^2([0,T],\mathbb{V})\cap C([0,T],\mathbb{W}^*)$.

\vskip 0.2cm
Let $(X,\ h,\ W)$ be any limit point of the tight family $\{(X^{h^\e},\ h^\e,\ W),\ \varepsilon\in(0,\varepsilon_0)\}$ with respect to the convergence in law.
We must show that $X$ has the same law as $\Gamma^0(\int_0^{\cdot}\dot h(s)ds)$,
and that actually $X^{h^\e}\Longrightarrow X$ in the smaller space $C([0,T];\mathbb{V})$.
\vskip 0.2cm

Set
$$
\Pi=\Big(L^2([0,T],\ \mathbb{V})\cap C([0,T],\ \mathbb{W}^*),\ S_N,\ C([0,T],\ \mathbb{R}^m)\Big).
$$
By Skorokhod representation theorem, there exist a stochastic basis
$(\Omega^1,\mathcal{F}^1,\{\mathcal{F}_t^1\}_{t\in[0,T]},\mathbb{P}^1)$ and, on this basis,
$\Pi$-valued random variables $(\widetilde{X}^\varepsilon,\ \widetilde{h}^\e,\ \widetilde{W}^\varepsilon)$, $(\widetilde{X},\ \widetilde{h},\ \widetilde{W})$
such that $(\widetilde{X}^\varepsilon,\ \widetilde{h}^\e,\ \widetilde{W}^\varepsilon)$ (respectively $(\widetilde{X},\ \widetilde{h},\ \widetilde{W})$) has the same law as $\{(X^{h^\e},\ h^\e,\ W),\ \varepsilon\in(0,\varepsilon_0)\}$ (respectively $(X,\ h,\ W)$), and
$(\widetilde{X}^\varepsilon,\ \widetilde{h}^\e,\ \widetilde{W}^\varepsilon)\rightarrow (\widetilde{X},\ \widetilde{h},\ \widetilde{W})$-$\mathbb{P}^1$ a.s. in $\Pi$.
\vskip 0.2cm

From the equation satisfied by $(X^{h^\e},\ h^\e,\ W)$, we see that $(\widetilde{X}^\varepsilon,\ \widetilde{h}^\e,\ \widetilde{W}^\varepsilon)$ satisfies the following integral equation in $\mathbb{W}^*$
\begin{eqnarray}\label{Eq X01}
\widetilde{X}^\varepsilon(t)
&=&-\nu\int_0^t\widehat{A}\widetilde{X}^\varepsilon(s)ds-\int_0^t\widehat{B}(\widetilde{X}^\varepsilon(s),u^{0}(s)
+\sqrt{\varepsilon}\lambda(\varepsilon)\widetilde{X}^\varepsilon(s))ds-\int_0^t\widehat{B}(u^{0}(s),\widetilde{X}^\varepsilon(s))ds\nonumber\\
& &+\int_0^t\frac{1}{\sqrt{\e}\lambda(\varepsilon)}
(\widehat{F}(u^0(s)+\sqrt{\varepsilon}\lambda(\varepsilon)\widetilde{X}^\varepsilon(s),s)-\widehat{F}(u^0(s),s))ds\nonumber\\
& &+\int_0^t\frac{1}{\lambda(\e)}\widehat{G}(u^{0}(s)+\sqrt{\varepsilon}\lambda(\varepsilon)\widetilde{X}^\varepsilon(s),s)d\widetilde{W}^\varepsilon(s)
+\int_0^t\widehat{G}(u^{0}(s)+\sqrt{\varepsilon}\lambda(\varepsilon)\widetilde{X}^\varepsilon(s),s)\dot{\widetilde{h}^\e}(s)ds,\nonumber\\
\end{eqnarray}
and moreover,(see (\ref{MDP Estimation 01}))
\begin{eqnarray}\label{Estation-X01}
\sup_{\varepsilon\in(0,\varepsilon_0)}E^1\Big[\sup_{s\in[0,T]}\|\widetilde{X}^\varepsilon(s)\|^p_\mathbb{W}\Big]\leq C_{p,N},\ \text{for any }2\leq p<\infty,
\end{eqnarray}
where $E^1$ stands for the expectation under the probability measure $\mathbb{P}^1$.
Owing to (\ref{Estation-X01}) and the fact that $\widetilde{X}^\e\rightarrow \widetilde{X}$ $\mathbb{P}^1$-a.s. in $L^2([0,T],\ \mathbb{V})\cap C([0,T],\ \mathbb{W}^*)$, we can assert that\\
there exists a sub-sequence $\widetilde{X}^{\e_k}$
 such that, as $\e_k\rightarrow 0$
\begin{itemize}
\item[ ]
 (a) $\widetilde{X}^{\e_k}\rightarrow \widetilde{X}$ weakly-* in $L^p(\Omega^1,\mathcal{F}^1,\mathbb{P}^1,\ L^\infty([0,T],\mathbb{W}))$ for any $2\leq p<\infty$,
\end{itemize}
Moreover,
\begin{eqnarray}\label{eq 05}
E^1\Big[\sup_{s\in[0,T]}\|\widetilde{X}(s)\|^p_\mathbb{W}\Big]\leq C_{p,N},\ \text{for any }2\leq p<\infty
\end{eqnarray}

\begin{itemize}
\item[ ] (b) $\widetilde{X}^{\e_k}\rightarrow \widetilde{X}$ weakly in $L^p(\Omega^1,\mathcal{F}^1,\mathbb{P}^1,\ L^q([0,T],\mathbb{V}))$ for any $2\leq p,\ q<\infty$.
\end{itemize}
Thanks to (\ref{Estation-X01}),(\ref{eq 05}), and the fact that $\widetilde{X}^\e\rightarrow \widetilde{X}$ $\mathbb{P}^1$-a.s. in $L^2([0,T],\ \mathbb{V})\cap C([0,T],\ \mathbb{W}^*)$, we have, as $\e_k\rightarrow 0$
\begin{itemize}
\item[ ](c) $\widetilde{X}^{\e_k}-\widetilde{X}\rightarrow 0$ in $L^2(\Omega^1,\mathcal{F}^1,\mathbb{P}^1,\ L^2([0,T],\mathbb{V}))$,


\item[ ](d) $\widehat{A}\widetilde{X}^{\e_k}\rightarrow \widehat{A}\widetilde{X}$ weakly in $L^2(\Omega^1,\mathcal{F}^1,\mathbb{P}^1,\ L^2([0,T],\mathbb{V}))$,


 \item[ ](e) $\frac{1}{\lambda(\e)}\int_0^\cdot\widehat{G}(u^{0}(s)+\sqrt{\varepsilon}\lambda(\varepsilon)\widetilde{X}^\epsilon(s),s)d\widetilde{W}^\varepsilon(s)
     \rightarrow 0$  in $L^2(\Omega^1,\mathcal{F}^1,\mathbb{P}^1,\ C([0,T],\mathbb{V}))$,


\item[ ] (f) $\frac{1}{\sqrt{\e}\lambda(\varepsilon)}
     (\widehat{F}(u^0(s)+\sqrt{\varepsilon}\lambda(\varepsilon)\widetilde{X}^\epsilon(s),s)-\widehat{F}(u^0(s),s))\rightarrow \widehat{F}'(u^0(s),s))\widetilde{X}(s)$ \\
      in $L^2(\Omega^1,\mathcal{F}^1,\mathbb{P}^1,\ L^2([0,T],\mathbb{V}))$,


\item[ ] (g) $\widehat{B}(\widetilde{X}^{\e_k}(s),u^{0}(s)
     +\sqrt{\varepsilon}\lambda(\varepsilon)\widetilde{X}^\varepsilon(s))\rightarrow \widehat{B}(\widetilde{X}(s),u^{0}(s))$ weakly\\
      in $L^2(\Omega^1,\mathcal{F}^1,\mathbb{P}^1,\ L^2([0,T],\mathbb{W}^*))$,


\item[ ] (h) $\widehat{B}(u^{0}(s),\widetilde{X}^{\e_k}(s))\rightarrow \widehat{B}(u^{0}(s),\widetilde{X}(s))$ in $L^2(\Omega^1,\mathcal{F}^1,\mathbb{P}^1,\ L^2([0,T],\mathbb{W}^*))$,


\item[ ] (i) $\widehat{G}(u^{0}(s)+\sqrt{\varepsilon}\lambda(\varepsilon)\widetilde{X}^\epsilon(s),s)\dot{\widetilde{h}^\e}(s)\rightarrow \widehat{G}(u^{0}(s),s)\dot{\widetilde{h}}(s)$ weakly \\
    in $L^2(\Omega^1,\mathcal{F}^1,\mathbb{P}^1,\ L^2([0,T],\mathbb{V}))$.

\end{itemize}
Letting $\e_k\rightarrow0$ in (\ref{Eq X01}) and using (a)--(i), it is easy to see that $\widetilde{X}$ is the
unique solution of the following equation
\begin{eqnarray}\label{Eq X02}
\widetilde{X}(t)&=&-\nu\int_0^t\widehat{A}\widetilde{X} (s)ds-\int_0^t\widehat{B}(\widetilde{X} (s),u^0 (s))ds-\int_0^t\widehat{B}(u^0 (s),\widetilde{X} (s))ds\\
& &+
\int_0^t\widehat{F}'(u^0 (s),s)\widetilde{X} (s)ds
+
\int_0^t\widehat{G}(u^0 (s),s)\dot{\widetilde{h}} (s) ds.\nonumber
\end{eqnarray}
This implies
$$
X\overset{law}{=}\widetilde{X}=\Gamma^0(\int_0^\cdot \dot{\tilde{h}}(s)ds)\overset{law}{=}\Gamma^0(\int_0^\cdot \dot{{h}}(s)ds).
$$

Next, we will prove the following stronger statement:
\begin{eqnarray}\label{Eq F01}
\lim_{\varepsilon\rightarrow0}\sup_{t\in[0,T]}\|\widetilde{X}(t)-\widetilde{X}^\e(t)\|_\mathbb{V}=0,\ \ \ \text{in probability}.
\end{eqnarray}
Because $\widetilde{X}^\e{=}X^{h^\e}$ in law, (\ref{Eq F01}) implies that $X^{h^\e}\Rightarrow X$ in the space $C([0,T],\mathbb{V})$.

\vskip 0.3cm
Let $v^\e(t)=\widetilde{X}^\e(t)-\widetilde{X}(t)$. Using ${\rm It\hat{o}}$'s formula, we have
\begin{eqnarray}\label{V 00}
& &\|v^\e(t)\|^2_{\mathbb{V}}+2\nu\int_0^t\|v^\e(s)\|^2ds\nonumber\\
&=&
   -2\int_0^t\langle\widehat{B}(\widetilde{X}^\e(s),u^0(s)+\sqrt{\e}\lambda(\e)\widetilde{X}^\e(s))-\widehat{B}(\widetilde{X}(s),u^0(s)),
         v^\e(s)\rangle ds\nonumber\\
& &-2\int_0^t\langle\widehat{B}(u^0(s),\widetilde{X}^\e(s))-\widehat{B}(u^0(s),\widetilde{X}(s)),v^\e(s)\rangle ds\nonumber\\
& &+2\int_0^t\big(\frac{1}{\sqrt{\e}\lambda(\e)}(\widehat{F}(u^0(s)+\sqrt{\varepsilon}\lambda(\varepsilon)\widetilde{X}^\epsilon(s),s)-\widehat{F}(u^0(s),s))
         -\widehat{F}'(u^0(s),s)\cdot\widetilde{X}(s),v^\e(s)\big)_{\mathbb{V}}ds\nonumber\\
& &+2\int_0^t\frac{1}{\lambda^2(\e)}\|\widehat{G}\big(u^0(s)+\sqrt{\e}\lambda(\e)\widetilde{X}^\e(s)\big)\|_{{\mathbb{V}}^\otimes m}^2ds\nonumber\\
& &+2\int_0^t\frac{1}{\lambda(\e)}\big(\widehat{G}(u^0(s)+\sqrt{\e}\lambda(\e)\widetilde{X}^\e(s),s),v^\e(s)\big)_\mathbb{V}d\widetilde{W}^\e(s)\nonumber\\
& &+2\int_0^t\big(\widehat{G}(u^0(s)+\sqrt{\e}\lambda(\e)\widetilde{X}^\e(s),s)\dot{\widetilde{h}^\e}(s)
-\widehat{G}(u^0(s),s)\dot{\widetilde{h}}(s),v^\e(s)\big)_\mathbb{V}ds.
\end{eqnarray}
By Lemma \ref{Lem-B-01}, we have
\begin{eqnarray}\label{V 01}
& &\big|\big\langle\widehat{B}(\widetilde{X}^\e(s),u^0(s)+\sqrt{\e}\lambda(\e)\widetilde{X}^\e(s))-\widehat{B}(\widetilde{X}(s),u^0(s)),
v^\e(s)\big\rangle\big|\nonumber\\
&\leq&
\big|\big\langle\widehat{B}(\widetilde{X}^\e(s),\sqrt{\e}\lambda(\e)\widetilde{X}^\e(s)),v^\e(s)\big\rangle\big|
+|\langle\widehat{B}(v^\e(s),u^0(s)),v^\e(s)\rangle|\nonumber\\
&\leq&
C_B\sqrt{\e}\lambda(\e)\|\widetilde{X}^\e(s)\|_{\mathbb{W}}^2\|v^\e(s)\|_{\mathbb{V}}+C_B\|v^\e(s)\|_{\mathbb{V}}^2\|u^0(s)\|_{\mathbb{W}},
\end{eqnarray}
and
\begin{eqnarray}\label{V 02}
\big\langle\widehat{B}(u^0(s),\widetilde{X}^\e(s))-\widehat{B}(u^0(s),\widetilde{X}(s)),v^\e(s)\big\rangle
=
\big\langle\widehat{B}(u^0(s),v^\e(s)),v^\e(s)\big\rangle=0.
\end{eqnarray}
By (\ref{F-01})--(\ref{F-04}), (\ref{Jian F1}), (\ref{Jian G}) and the mean value theorem,
\begin{eqnarray}\label{V 03}
& &\Big|\Big(\frac{1}{\sqrt{\e}\lambda(\e)}\big(\widehat{F}(u^0(s)+\sqrt{\varepsilon}\lambda(\varepsilon)\widetilde{X}^\epsilon(s),s)-\widehat{F}(u^0(s),s)\big)
         -\widehat{F}'(u^0(s),s)\widetilde{X}(s),v^\e(s)\Big)_{\mathbb{V}}\Big|\nonumber\\
&\leq&
C\sqrt{\varepsilon}\lambda(\varepsilon)\|\widetilde{X}^{\e}(s)\|_{\mathbb{V}}^2\|v^\e(s)\|_{\mathbb{V}}+C\|v^\e(s)\|_{\mathbb{V}}^2.
\end{eqnarray}
(\ref{G-02}) implies
\begin{eqnarray}\label{V 04}
\frac{1}{\lambda^2(\e)}\|\widehat{G}\big(u^0(s)+\sqrt{\e}\lambda(\e)\widetilde{X}^\e(s)\big)\|_{{\mathbb{V}}^\otimes m}^2
\leq
\frac{C}{\lambda^2(\e)}\|u^0(s)\|_{\mathbb{V}}^2+C_G\e\|\widetilde{X}^{\e}(s)\|_{\mathbb{V}}^2,
\end{eqnarray}
and
\begin{eqnarray}\label{V 05}
& &2\int_0^t\big|\big(\widehat{G}(u^0(s)+\sqrt{\e}\lambda(\e)\widetilde{X}^\e(s),s)\dot{\widetilde{h}^\e}(s)
-\widehat{G}(u^0(s),s)\dot{\widetilde{h}}(s),v^\e(s)\big)_\mathbb{V}\big|ds\nonumber\\
&\leq&
2\int_0^t\big|\big(\widehat{G}(u^0(s)+\sqrt{\e}\lambda(\e)\widetilde{X}^\e(s),s)\dot{\widetilde{h}^\e}(s)
-\widehat{G}(u^0(s),s)\dot{\widetilde{h}^\e}(s),v^\e(s)\big)_\mathbb{V}\big|ds\nonumber\\
& &+2\int_0^t\big|\big(\widehat{G}(u^0(s),s)\dot{\widetilde{h}^\e}(s)-\widehat{G}(u^0(s),s)\dot{\widetilde{h}}(s),v^\e(s)\big)_\mathbb{V}\big|ds\nonumber\\
&\leq&
C\sqrt{\e}\lambda(\e)\int_0^t\|\widetilde{X}^\e(s)\|_{\mathbb{V}}\|\dot{\widetilde{h}^\e}(s)\|_{\mathbb{R}^m}\|v^\e(s)\|_{\mathbb{V}}ds
+C\int_0^t\|u^0(s)\|_{\mathbb{V}}\|\dot{\widetilde{h}^\e}(s)-\dot{\widetilde{h}}(s)\|_{\mathbb{R}^m}\|v^\e(s)\|_{\mathbb{V}}ds\nonumber\\
&\leq&
\sqrt{\e}\lambda(\e)C_N\sup_{t\in[0,T]}\|\widetilde{X}^\e(s)\|_{\mathbb{V}}\big(\int_0^T\|v^\e(s)\|_{\mathbb{V}}^2ds\big)^\frac{1}{2}
+C_N\sup_{t\in[0,T]}\|u^0(s)\|_{\mathbb{V}}\big(\int_0^T\|v^\e(s)\|_{\mathbb{V}}^2ds\big)^\frac{1}{2}.\nonumber\\
\end{eqnarray}
Combining (\ref{V 00})--(\ref{V 05}), we get
\begin{eqnarray*}
& &\|v^\e(t)\|^2_{\mathbb{V}}+2\nu\int_0^t\|v^\e(s)\|^2ds\nonumber\\
&\leq&
\Big\{C\sqrt{\e}\lambda(\e)\big(\sup_{s\in[0,T]}\|\widetilde{X}^\e(s)\|_{\mathbb{W}}^2+\sup_{s\in[0,T]}\|\widetilde{X}^\e(s)\|_{\mathbb{V}}^2+1\big)
+C_N\sup_{t\in[0,T]}\|u^0(s)\|_{\mathbb{V}}\Big\}\Big(\int_0^T\|v^\e(s)\|_{\mathbb{V}}^2ds\Big)^\frac{1}{2}\nonumber\\
& &+C\int_0^t\|v^\e(s)\|_{\mathbb{V}}^2\big(\|u^0(s)\|_{\mathbb{W}}+1\big)ds\nonumber\\
& &+\frac{C}{\lambda^2(\e)}\sup_{s\in[0,T]}\|u^0(s)\|_{\mathbb{V}}^2+\e C\sup_{s\in[0,T]}\|\widetilde{X}^{\e}(s)\|_{\mathbb{V}}^2\nonumber\\
& &+\sup_{t\in[0,T]}\Big|\frac{2}{\lambda(\e)}\int_0^t\big(\widehat{G}(u^0(s)+\sqrt{\e}\lambda(\e)\widetilde{X}^\e(s),s),v^\e(s)\big)_\mathbb{V}
d\widetilde{W}^\e(s)\Big|.
\end{eqnarray*}

By Gronwall's inequality,
\begin{eqnarray}\label{Eq gronwall}
    \sup_{t\in[0,T]}\|v^\e(t)\|^2_{\mathbb{V}}
&\leq&
    \Theta(\e,T)
    \exp\Big(\int_0^T\varphi(s)ds\Big),
\end{eqnarray}

Here
\begin{eqnarray}\label{Eq varphi}
\varphi(s)=\|u^0(s)\|_\mathbb{W}+1,
\end{eqnarray}
and
\begin{eqnarray}
\Theta(\e,T)
&=&
\Big\{C\sqrt{\e}\lambda(\e)\big(\sup_{s\in[0,T]}\|\widetilde{X}^\e(s)\|_{\mathbb{W}}^2+\sup_{s\in[0,T]}\|\widetilde{X}^\e(s)\|_{\mathbb{V}}^2+1\big)\nonumber\\
& &\qquad\qquad+C_N\sup_{t\in[0,T]}\|u^0(s)\|_{\mathbb{V}}\Big\}\Big(\int_0^T\|v^\e(s)\|_{\mathbb{V}}^2ds\Big)^\frac{1}{2}\nonumber\\
& &+\frac{C}{\lambda^2(\e)}\sup_{s\in[0,T]}\|u^0(s)\|_{\mathbb{V}}^2+\e C\sup_{s\in[0,T]}\|\widetilde{X}^{\e}(s)\|_{\mathbb{V}}^2\nonumber\\
& &+\sup_{t\in[0,T]}\Big|\frac{2}{\lambda(\e)}\int_0^t\big(\widehat{G}(u^0(s)+\sqrt{\e}\lambda(\e)\widetilde{X}^\e(s),s),v^\e(s)\big)_\mathbb{V}
d\widetilde{W}^\e(s)\Big|.
\end{eqnarray}

Remembering that $\lim_{\e\rightarrow0}\widetilde{X}^\e=\widetilde{X}$ in $L^2([0,T],\mathbb{V})\ \ \mathbb{P}^1$-a.s.
and $\sup_{s\in[0,T]}\|u^0(s)\|_\mathbb{W}\leq C<\infty\ $,
we get
\begin{eqnarray}\label{Eq es 01}
 \exp\Big(\int_0^T\varphi(s)ds\Big)\leq C<\infty,
\end{eqnarray}
and
\begin{eqnarray}\label{Eq es 02}
 \lim_{\e\rightarrow0}\Theta(\e,T)
 =0,
      \ \ \ \text{in probability}.
\end{eqnarray}
Hence,
 \begin{eqnarray*}
  \sup_{t\in[0,T]}\|v^\e(t)\|^2_{\mathbb{V}}\rightarrow 0,\ \ \ \text{in probability}.
 \end{eqnarray*}
\end{proof}
Replacing $\frac{1}{\lambda(\e)}\int_0^t\widehat{G}(u^{h^\e}(s),s)dW(s)$ by 0 and replacing $h^\e$ by deterministic elements
in $\mathcal{H}_0$ in the proof of Proposition \ref{Prop Tight}, Proposition \ref{Prop tight W}
and Theorem \ref{Thm condition 02}, we can similarly prove the following result.

\begin{thm}\label{Thm condition 01}
$\Gamma^0(\int_0^{\cdot}\dot g(s)ds)$ is a continuous mapping from $g\in S_N$ into $C([0,T];\mathbb{V})$, in particular,
$\{\Gamma^0(\int_0^{\cdot}\dot g(s)ds);\ g\in S_N\}$ is a compact subset of $C([0,T],\mathbb{V})$.
\end{thm}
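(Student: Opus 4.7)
The plan is to adapt the arguments used for Proposition \ref{Prop Tight}, Proposition \ref{Prop tight W} and Theorem \ref{Thm condition 02}, but with all randomness removed. Concretely, for each $g\in S_N$, set $X^g=\Gamma^0(\int_0^\cdot \dot g(s)ds)$, so that $X^g$ solves the deterministic linear equation (\ref{MDP 02}) with $h$ replaced by $g$. The continuous linearity of (\ref{MDP 02}) in $X$ together with Galerkin plus standard monotonicity arguments yields existence and uniqueness in $C([0,T];\mathbb{V})\cap L^\infty([0,T];\mathbb{W})$, so the map $g\mapsto X^g$ is well defined. Since $S_N$ equipped with the weak topology is a compact Polish space, the compactness of $\{X^g:g\in S_N\}$ in $C([0,T];\mathbb{V})$ reduces to proving that $g\mapsto X^g$ is continuous from $(S_N,\text{weak})$ into $C([0,T];\mathbb{V})$.

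To establish this continuity, I would let $g_n\to g$ weakly in $S_N$ and set $X_n:=X^{g_n}$, $X:=X^g$. The first step is a uniform a priori bound
\[
\sup_n\sup_{t\in[0,T]}\|X_n(t)\|_{\mathbb{W}}\le C_N,
\]
obtained by mimicking the estimates in Lemma \ref{MDP lemma 02} for $\|X_n\|_{\mathbb{V}}^p$ and $\|X_n\|_\ast^p$. The proof becomes considerably simpler here: there are no stochastic integrals, no terms of order $\sqrt\varepsilon\lambda(\varepsilon)$, and the $\dot h^\varepsilon$ contribution is controlled exactly as in the estimate of $I_4$ in (\ref{MDP V 04}) using $\int_0^T\|\dot g_n(s)\|_{\mathbb{R}^m}^2ds\le N$ and Cauchy--Schwarz. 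Step two is to deduce, as in the proof of Proposition \ref{Prop Tight} and Proposition \ref{Prop tight W}, that $\{X_n\}$ is relatively compact in $L^2([0,T];\mathbb{V})\cap C([0,T];\mathbb{W}^*)$ via the fractional Sobolev criterion of Lemma \ref{Compact}, every term in the deterministic analogue of $I_1,\ldots,I_6$ being handled exactly as there but without expectations.

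Step three is to pass to the limit. Along a subsequence, $X_n\to \tilde X$ strongly in $L^2([0,T];\mathbb{V})$ and in $C([0,T];\mathbb{W}^*)$, weakly$\ast$ in $L^\infty([0,T];\mathbb{W})$, while $\dot g_n\to\dot g$ weakly in $L^2([0,T];\mathbb{R}^m)$. The linear terms $\widehat{A}X_n$, $\widehat{B}(X_n,u^0)$, $\widehat{B}(u^0,X_n)$ and $\widehat{F}'(u^0,\cdot)X_n$ pass to the limit by weak continuity, exactly as in items (d)--(h) of the proof of Theorem \ref{Thm condition 02}; and $\widehat{G}(u^0(s),s)\dot g_n(s)\rightharpoonup \widehat{G}(u^0(s),s)\dot g(s)$ weakly in $L^2([0,T];\mathbb{V})$ because $\widehat{G}(u^0(\cdot),\cdot)$ is fixed and bounded (cf.\ item (i) there). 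Thus $\tilde X$ satisfies (\ref{MDP 02}) with $h=g$, and by uniqueness $\tilde X=X$, so the whole sequence $X_n$ converges to $X$ in $L^2([0,T];\mathbb{V})\cap C([0,T];\mathbb{W}^*)$.

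It remains to upgrade this to convergence in $C([0,T];\mathbb{V})$, which I expect to be the main technical step. Setting $v_n=X_n-X$, the deterministic chain rule applied to $\|v_n\|_{\mathbb{V}}^2$ gives, analogously to (\ref{V 00}) but with the stochastic terms absent,
\[
\|v_n(t)\|_{\mathbb{V}}^2+2\nu\int_0^t\|v_n(s)\|^2ds \le 2\int_0^t|\langle\widehat B(v_n,u^0),v_n\rangle|ds + R_n(t),
\]
where $R_n(t)$ collects the $F'$ contribution, bounded by $C\int_0^t\|v_n\|_{\mathbb{V}}^2ds$, and the forcing term
\[
2\int_0^t\bigl(\widehat G(u^0(s),s)(\dot g_n(s)-\dot g(s)),v_n(s)\bigr)_{\mathbb{V}}ds,
\]
which by Cauchy--Schwarz is dominated by $C_N\sup_{s\le T}\|u^0(s)\|_{\mathbb{V}}\bigl(\int_0^T\|v_n\|_{\mathbb{V}}^2ds\bigr)^{1/2}$ and hence tends to $0$ by the already proven $L^2([0,T];\mathbb{V})$ convergence. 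Combined with $|\langle\widehat B(v_n,u^0),v_n\rangle|\le C_B\|v_n\|_{\mathbb{V}}^2\|u^0\|_{\mathbb{W}}$ and Gronwall's inequality with the integrable weight $\|u^0(\cdot)\|_{\mathbb{W}}+1$, this yields $\sup_{t\in[0,T]}\|v_n(t)\|_{\mathbb{V}}^2\to 0$, completing the proof of continuity and hence of the compactness of $\{\Gamma^0(\int_0^\cdot\dot g(s)ds):g\in S_N\}$ in $C([0,T];\mathbb{V})$.
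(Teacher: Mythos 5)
Your proposal is correct and follows essentially the same route as the paper, which simply instructs the reader to repeat the arguments of Proposition \ref{Prop Tight}, Proposition \ref{Prop tight W} and Theorem \ref{Thm condition 02} with the stochastic integral replaced by $0$ and $h^\e$ replaced by deterministic elements of $S_N$; your a priori bounds, compactness step, limit identification by uniqueness, and the final Gronwall upgrade to $C([0,T];\mathbb{V})$ are exactly the intended deterministic analogues.
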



\def\refname{ References}

\end{document}